\theoremstyle{plain}
\newtheorem{thm}{\indent\bf Theorem}[section]
\newtheorem{lem}[thm]{\indent\bf Lemma}
\newtheorem{prop}[thm]{\indent\bf Proposition}
\newtheorem{cor}[thm]{\indent\bf Corollary}
\theoremstyle{definition}
\newtheorem{rem}{\indent\it Remark}[section]
\newtheorem{exa}{\indent\it Example}[section]
\numberwithin{equation}{section}
\def \re {\mathrm{Re\,}}
\def \im {\mathrm{Im\,}}
\def \diag {\mathrm{diag\,}}
\begin{document}
\title[Degenerate third Painlev\'e transcendents]{Boutroux ansatz for 
the degenerate third Painlev\'e transcendents} 
\author[Shun Shimomura]{Shun Shimomura} 
\address{Department of Mathematics, 
Keio University, 
\endgraf
3-14-1, Hiyoshi, Kohoku-ku,
Yokohama 223-8522 
Japan
\endgraf
{\tt shimomur@math.keio.ac.jp}
}
\date{}
\begin{abstract}
For a general solution of the degenerate third Painlev\'e equation 
we show the Boutroux ansatz near the point at infinity. 
It admits an asymptotic
representation in terms of the Weierstrass pe-function in cheese-like
strips along generic directions. The expression is obtained by using
isomonodromy deformation of a linear system governed by the degenerate
third Painlev\'e equation. 
\vskip0.2cm
\par
2010 {\it Mathematics Subject Classification.} {34M55, 34M56, 34M40, 34M60,
33E05.}
\par
{\it Key words and phrases.} {degenerate third Painlev\'{e} equation; 
Boutroux ansatz; Boutroux equations; isomonodromy deformation; WKB analysis;
pe-function; theta-function.} 
\end{abstract}
\maketitle
\allowdisplaybreaks
\section{Introduction}\label{sc1}
In the geometrical study of the spaces of initial values
for Painlev\'e equations, Sakai \cite{Sa} classified the third
Painlev\'e equations into three types $P_{\mathrm{III}}(D_6)$,
$P_{\mathrm{III}}(D_7)$ and $P_{\mathrm{III}}(D_8)$. 
For the types $P_{\mathrm{III}}(D_7)$ and $P_{\mathrm{III}}(D_8)$ 
Ohyama et al.~\cite{Oh} examined basic properties including $\tau$-functions,
irreducibility, the spaces of initial values. Equation $P_{\mathrm{III}}
(D_8)$ is changed into a special case of $P_{\mathrm{III}}(D_6)$.
Equation $P_{\mathrm{III}}(D_7)$ is called the degenerate third Painlev\'e
equation or degenerate $P_{\mathrm{III}}$, which may be normalised in the 
form  
\begin{equation*}
v_{\xi\xi}=\frac{v_{\xi}^2}{v} -\frac{v_{\xi}}{\xi} -\frac{2v^2}{\xi^2}
 +\frac a{\xi} + \frac 1v
\end{equation*}
$(v_{\xi}=dv/d\xi)$ with $a\in\mathbb{C}.$
The change of variables
$$
  2\xi= \epsilon b \tau^2, \quad  v=\epsilon\tau u
$$
takes this equation to the equivalent equation discussed in \cite{KV1},
\cite{KV2}
\begin{equation}\label{1.1}
u_{\tau\tau}=\frac{u_{\tau}^2}{u} -\frac{u_{\tau}}{\tau} +\frac 1{\tau}
(-8\epsilon u^2 +2ab) +\frac{b^2}{u}
\end{equation}
with $\epsilon=\pm 1,$ $a\in\mathbb{C},$ $b\in \mathbb{R}\setminus\{0\}$,
which governs isomonodromy deformation of linear system \eqref{3.1}.
Using isomonodromy system \eqref{3.1}, Kitaev and Vartanian 
\cite{KV1}, \cite{KV2} obtained asymptotic solutions of \eqref{1.1} 
as $\tau \to \pm\infty$, $\pm i \infty$ and $\tau \to \pm 0$, $\pm i 0,$ 
with connection formulas among them. Furthermore, for \eqref{1.1}, a special
meromorphic solution is studied by \cite{Kitaev-4}, \cite{KV3}, and
truncated solutions are given by \cite{Vartanian}. 
\par
As mentioned in \cite{KV1}, \cite{Vartanian}, in physical and geometrical 
applications, degenerate $P_{\mathrm{III}}$ appears in contexts 
independent of $P_{\mathrm{III}}(D_6)$, i.e. complete $P_{\mathrm{III}}$, 
and its significant analytic properties are important.
Indeed behaviours of solutions of \eqref{1.1} along real and imaginary axes
\cite{KV1}, \cite{KV2} are quite different from those for complete
$P_{\mathrm{III}}$ \cite{K-Sb}. 
For complete $P_{\mathrm{III}}$ of the Sine-Gordon type, Novokshenov 
\cite{Novokshenov-3}, \cite{Novokshenov-4}, \cite[Chap.~16]{FIKN} 
provided an asymptotic 
representation of solutions in terms of the $\mathrm{sn}$-function 
along generic directions near the point at infinity. It is meaningful 
to establish the counterpart of this expression for degenerate 
$P_{\mathrm{III}}$. 
\par
In this paper we show the Boutroux ansatz \cite{Boutroux} for degenerate 
$P_{\mathrm{III}}$, that is, present an elliptic asymptotic 
representation for a general solution along generic directions 
near the point at infinity. 
The main results are described in Section \ref{sc2}.
As in Theorems \ref{thm2.1} and \ref{thm2.2}, degenerate $P_{\mathrm{III}}$ 
admits a general solution written in terms of the Weierstrass $\wp$-function,
and so does $P_{\mathrm{I}}$ (\cite{Joshi-Kruskal-1}, \cite{Joshi-Kruskal-2}, 
\cite{Kitaev-2}, \cite{Kitaev-3}). 
On the other hand for $P_{\mathrm{II}},$ $P_{\mathrm{IV}}$, $P_{\mathrm{III}}
(D_6)$ (of Sine-Gordon type) and $P_{\mathrm{V}}$, elliptic asymptotic solutions
are given by the $\mathrm{sn}$-function (\cite{FIKN}, \cite{Kapaev-1}, 
\cite{Kapaev-2},
\cite{Kapaev-3}, \cite{Kitaev-3}, \cite{Novokshenov-1}, \cite{Novokshenov-2},
\cite{Novokshenov-3}, \cite{Novokshenov-4},
\cite{Shimom}, \cite{Vere}). This fact reflects
the position of degenerate $P_{\mathrm{III}}$, i.e. $P_{\mathrm{III}}(D_7)$
in the degeneration scheme of the Painlev\'e equations \cite{Oh}, \cite{Sa}.
\par
For our purpose it is appropriate to treat an equation of the form
\begin{equation}\label{1.2}
y''=\frac {(y')^2}y -\frac{y'}x -2y^2 + \frac{3a}x +\frac 1y
\end{equation}
$(y'=dy/dx)$, which comes from \eqref{1.1} via the substitution
\begin{equation}\label{1.*}
\epsilon\tau u= (x/3)^2y, \quad  \epsilon b \tau^2 = 2 (x/3)^3.
\end{equation}
Equation \eqref{1.2} with $x=e^{i\phi}t$ governs isomonodromy deformation of 
the linear system   
\begin{align}\label{1.3}
\frac{d\Psi}{d\lambda} = & \frac t 3 \mathcal{B}(\lambda,t) \Psi,
\\
\notag
\mathcal{B}  (\lambda,t) =& -i e^{i\phi} \lambda \sigma_3 
+ \begin{pmatrix}  0  & -2 ie^{i\phi}y \\ 
\Gamma_0(t,y,y^t)/y  & 0    \end{pmatrix}
\\
\notag
& -(\Gamma_0(t,y,y^t) + 3 (1/2+ ia)t^{-1})\lambda^{-1} \sigma_3 + 2 e^{i\phi}
\begin{pmatrix} 0 & i \\ i & 0 \end{pmatrix} \lambda^{-2},
\end{align}
where $y$ and $y^t$ are arbitrary complex parameters, and
$$
 \Gamma_0(t,y,y^t) =\frac {y^t}{y} -\frac{i e^{i\phi}}y - (1+3ia) t^{-1},\quad
\sigma_3 = \begin{pmatrix} 1 & 0 \\ 0 & -1 \end{pmatrix}.
$$
As shown in Section \ref{sc3} system \eqref{1.3} is a result of transformation
of system \eqref{3.1} treated in \cite{KV1}, \cite{KV2}. 
The isomonodromy deformation of \eqref{3.1} is governed by equation
\eqref{1.1}, and solutions of \eqref{1.1} are labelled by coordinates on the
monodromy manifold for \eqref{3.1} defined by Stokes matrices and a 
connection matrix $G=(g_{ij})\in SL_2(\mathbb{C})$ for matrix solutions 
around $\mu=0$ and $\mu=\infty.$
System \eqref{1.3} admits the same monodromy 
manifold as of \eqref{3.1}, which is described by the same Stokes matrices
and $G$ for suitably chosen matrix solutions (cf. Proposition \ref{prop3.3}), 
so that solutions of \eqref{1.1} and \eqref{1.2} are 
labelled by the same monodromy data. 
\par
Applying WKB analysis we solve the direct monodromy problem for linear system
\eqref{1.3} in Section \ref{sc5}, and obtain key relations in 
Corollary \ref{cor5.2} containing the monodromy data $G$ and certain integrals, 
which lead to a solution of an inverse problem. 
Basic necessary materials for this calculation are summarised in 
Section \ref{sc4}. Asymptotic properties of these integrals are examined
in Section \ref{sc6} by the use of the $\vartheta$-function, and from these 
formulas asymptotic forms in the main theorems are derived in Section \ref{sc7}.
Then the justification as a solution of \eqref{1.2} is made along the line
of Kitaev \cite{Kitaev-1}, \cite{Kitaev-3}. The final section is devoted to
the Boutroux equations, which determine the modulus contained in the elliptic
representation of solutions.
\par
Throughout this paper we use the following symbols:
\par
(1) $\sigma_1,$ $\sigma_2,$ $\sigma_3$ are the Pauli matrices
$$
\sigma_1=\begin{pmatrix} 0 & 1 \\ 1 & 0 \end{pmatrix}, \quad
\sigma_2=\begin{pmatrix} 0 & -i \\ i & 0 \end{pmatrix}, \quad
\sigma_3=\begin{pmatrix} 1 & 0 \\ 0 & -1 \end{pmatrix}; 
$$
\par
(2) for complex-valued functions $f$ and $g$, we write $f \ll g$ or $g  \gg f$
if $f = O(|g|)$, and write $f\asymp g$ if $g \ll f \ll g.$
\section{Main results}\label{sc2}
To state our main results we give some explanations on necessary facts.
\subsection{Monodromy data}\label{ssc2.1}
Isomonodromy system \eqref{3.1} admits the matrix solutions
$$
Y^{\infty}_k(\mu) =(I+O(\mu^{-1})) \mu^{-(1/2+ia)\sigma_3}
\exp(-i\tau \mu^2 \sigma_3 )
$$
as $\mu \to \infty$ through the sector  
$|\arg \mu +\arg \tau^{1/2} -\pi k/2| <\pi/2,$ and
$$
X^0_k(\mu)= ({i}/{\sqrt 2})\Theta_0^{\sigma_3} 
(\sigma_1+\sigma_3 +O(\mu))
\exp(-i \sqrt{\tau\epsilon b} \mu^{-1}\sigma_3 )
$$
as $\mu \to 0$ through the sector $|\arg \mu -\arg(\tau\epsilon
b)^{1/2} -\pi k|<\pi$, where $k\in \mathbb{Z}$ (see Section \ref{ssc3.2}). Let 
the invariant Stokes matrices and a connection matrix be such that
$Y^{\infty}_{j+1}(\mu) = Y^{\infty}_j(\mu)S^{\infty}_j$, 
$X^{0}_{j+1}(\mu) = X^{0}_j(\mu)S^{0}_j$ with $j\in \mathbb{Z}$ and that
$Y^{\infty}_0(\mu)=X^0_0(\mu) G.$ These are 
$$
S^{\infty}_0 =\begin{pmatrix} 1 & 0 \\ s^{\infty}_0 & 1 \end{pmatrix}, \quad
S^{\infty}_1 =\begin{pmatrix} 1 & s^{\infty}_1 \\ 0 & 1 \end{pmatrix}, \quad
S^{0}_0 =\begin{pmatrix} 1 & s^0_0 \\ 0 & 1 \end{pmatrix} 
$$
with \eqref{3.6}, and $G=(g_{ij})$ with $g_{11}g_{22}-g_{12}g_{21}=1$.
The monodromy manifold is given by $GS^{\infty}_0S^{\infty}_1 \sigma_3
e^{\pi(i/2-a)\sigma_3} =S^0_0 \sigma_1 G$, whose generic points admit 
the coordinates expressed by $G$ \cite{KV1}.
Solutions $u(\tau)$ of \eqref{1.1} and $y(x)$ of \eqref{1.2}
related via \eqref{1.*} are labelled by the same monodromy data.
\subsection{Elliptic curve and Boutroux equations}\label{ssc2.2}
For $A\in \mathbb{C}$ around $A=3\cdot 2^{2/3}$ the polynomial $4z^3-Az^2+1$
has roots $z_0,$ $z_1$ close to $2^{-1/3}$ and $z_2$ close to $-4^{-2/3},$
and especially, $z_0=z_1=2^{-1/3},$ $z_2=-4^{-2/3}$ when $A=3\cdot 2^{2/3}.$
Let $\Pi_+$ and $\Pi_-$ be the copies of $P^1(\mathbb{C}) \setminus ([\infty,
z_2] \cup [z_0,z_1])$ and set $\Pi_A = \Pi_+ \cup \Pi_-$ glued  along the cuts
$[\infty, z_2]$ and $[z_0,z_1]$, where $\re z \to -\infty$ along 
$[\infty, z_2]$.
Then $\Pi_A$ is the elliptic curve given by 
$$
w(A,z)^2 =4z^3 -Az^2+1,
$$ 
where the
branch of $\sqrt{4z^3 -Az^2 +1}:= 2\sqrt {z-z_0} \sqrt{ z-z_1} \sqrt{z-z_2}$
is chosen in such a way that $\re \sqrt{z-z_j} \to +\infty$ as $z\to \infty$
along the positive real axis on the upper plane $\Pi_+.$
The elliptic curve $\Pi_A$ does not degenerate as long as $A\not= 3\cdot
2^{2/3} e^{2\pi i m/3}$ $(m=0, \pm 1),$ that is, $4z^3-Az^2+1$ has no double
roots, and then we may define $\Pi_A$ continuously. 
\par
As will be shown in Section \ref{sc8}, for any $\phi \in \mathbb{R}$, there 
exists $A_{\phi} \in \mathbb{C}$ with $\Pi_{A_{\phi}}$ such that, for 
every cycle $\mathbf{c}$ on $\Pi_{A_{\phi}}$ 
$$
\im e^{i\phi} \int_{\mathbf{c}} \frac{w(A_{\phi}, z)}{z^2} dz=0,
$$
and that $A_{\phi}$ has the properties (Proposition \ref{prop8.15}):
\par
(1) for every $\phi$, $A_{\phi}$ is uniquely determined;
\par
(2) $A_{\phi}$ is continuous in $\phi \in \mathbb{R},$ and is smooth in
$\phi \in \mathbb{R}\setminus \{k\pi/3 \,|\, k\in \mathbb{Z} \}$;
\par
(3) $A_{\phi \pm 2\pi/3}= e^{\pm 2\pi i/3} A_{\phi},$ $A_{\phi+\pi }
=A_{\phi},$ $A_{-\phi}=\overline{A_{\phi}};$
\par
(4) $\Pi_{A_{\phi}}$ degenerates if and only if $\phi=k\pi/3$ with $k\in
\mathbb{Z},$ and then $A_0= 3\cdot 2^{2/3},$ $A_{\pm \pi/3}=e^{\mp 2\pi i/3}
A_0,$ $A_{\pm 2\pi/3} = e^{\pm 2\pi i/3}A_0,$ $A_{\pm \pi}= A_0.$ 
\par
In particular, for $0<|\phi|<\pi/3$ let us consider $A_{\phi}$ for specified
cycles. For $A_{\phi}$ close to $A_0=3\cdot 2^{2/3},$ by Proposition 
\ref{prop8.16} number the roots of 
$w(A_{\phi},z)^2$ close to $2^{-1/3}$ in such a way that $\im z_0 \le
\im z_1$ if $\phi >0,$ (respectively, $\im z_1 \le \im z_0$ if $\phi <0$)
and let the numbering be retained as long as coalescence does not occur. 
Then for $0<|\phi|<\pi/3$ we have basic cycles $\mathbf{a}$ and $\mathbf{b}$  
on $\Pi_{A_{\phi}}$, which are drawn on $\Pi_+$ as in Figure \ref{cycles1}.
{\small
\begin{figure}[htb]
\begin{center}
\unitlength=0.78mm
\begin{picture}(80,50)(-10,-5)
 \put(-8,22){\makebox{$\infty$}}
 \put(20,12){\makebox{$z_2$}}
 \put(65,32){\makebox{$z_0$}}
 \put(53,2){\makebox{$z_1$}}
  \put(20,-5){\makebox{(a) $\phi \to -0$}}
\thinlines
 \put(25,20.5){\line(-1,0){25}}
 \put(25,19.6){\line(-1,0){25}}
\put(50.5,10){\line(1,2){10}}
\put(49.5,10){\line(1,2){10}}
  \qbezier(66,20) (64,14) (61.5,11.5)
    \qbezier(27,26.3) (30,26.8) (37,25.0)
  \put(66,20){\vector(0,1){0}}
 \put(37,25){\vector(3,-2){0}}
  \put(65,10){\makebox{$\mathbf{a}$}}
  \put(27,29){\makebox{$\mathbf{b}$}}
\put(-5,0){\makebox{$\Pi_{+}$}}
\thicklines
 \put(25,20){\circle*{1}}
\put(50,10){\circle*{1}}
\put(60,30){\circle*{1}}
 \qbezier[20](22,19.5) (23,11.5) (44,7.5)
 \qbezier[7](48.4,7.5) (54.2,8) (52.2,13.1)
  \qbezier(37,22) (42,20.5) (44.5,19)
  \qbezier(47.6,17.5) (48.3,17.8)  (51.9,14.1)
 \qbezier(22,20.4) (23,25.5) (37,22)
\qbezier(46,8) (50,2.5) (57.7,10.5)
\qbezier(64,32) (66,26) (63.2,20)
\qbezier(46,8) (44,14) (46.8,20)
\qbezier (64,32)(60,37.5)(52.3,29.5)
\qbezier (63.2,20)(60.8,14)(57.7,10.5)
\qbezier (46.8,20)(49.2,26)(52.3,29.5)
\end{picture} \qquad\qquad
\begin{picture}(80,50)(-10,-5)
 \put(-8,22){\makebox{$\infty$}}
 \put(20,12){\makebox{$z_2$}}
 \put(65,3){\makebox{$z_0$}}
 \put(43,37){\makebox{$z_1$}}
  \put(20,-5){\makebox{(b) $\phi \to +0$}}
\thinlines
 \put(25,20.5){\line(-1,0){25}}
 \put(25,19.6){\line(-1,0){25}}
\put(50.5,30){\line(1,-2){10}}
\put(49.5,30){\line(1,-2){10}}
  \qbezier(66,20) (64,26) (60.9,29.5)
    \qbezier(27,29.7) (30,32.1) (37,33.5)
  \put(60.7,29.5){\vector(-2,1){0}}
 \put(37,33.5){\vector(1,0){0}}
  \put(68,21){\makebox{$\mathbf{a}$}}
  \put(22,32){\makebox{$\mathbf{b}$}}
\put(-5,0){\makebox{$\Pi_{+}$}}
\thicklines
 \put(25,20){\circle*{1}}
\put(50,30){\circle*{1}}
\put(60,10){\circle*{1}}
 \qbezier(22,20.2) (23,28.5) (44,32.5)
 \qbezier(48.4,32.5) (54.2,32) (52.2,26.2)
  \qbezier[10](37,18) (48,21.5) (51.2,25.5)
  \qbezier[15](22,19) (23,14.5) (37,18)
\qbezier(46,32) (50,37.5) (57.7,29.5)
\qbezier(64,8) (66,14) (63.2,20)
\qbezier(46,32) (44,26) (46.8,20)
\qbezier (64,8)(60,2.5)(52.3,10.5)
\qbezier (63.2,20)(60.8,26)(57.7,29.5)
\qbezier (46.8,20)(49.2,14)(52.3,10.5)
\end{picture}
\end{center}
\caption{Cycles $\mathbf{a}$ and $\mathbf{b}$}
\label{cycles1}
\end{figure}
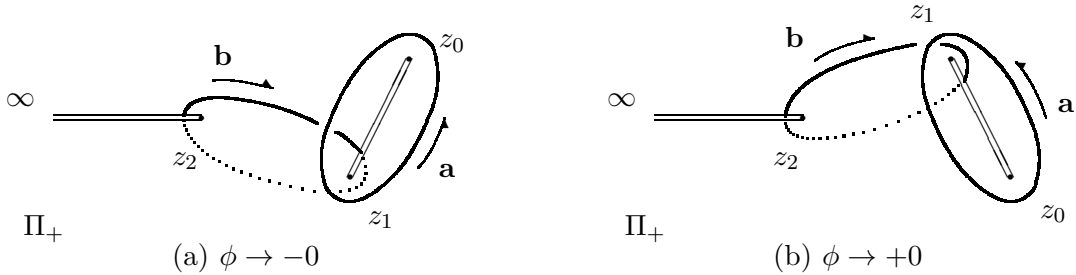
}
For $|\phi|<\pi/3$ the cycles $\mathbf{a}$ and $\mathbf{b}$ may be defined
continuously on $\Pi_{A_{\phi}}$, and the Boutroux equations are given by
\begin{equation}\label{2.1}
\im e^{i\phi} \int_{\mathbf{a}} \frac{w(A_{\phi},z)}{z^2} dz =0, \quad
\im e^{i\phi} \int_{\mathbf{b}} \frac{w(A_{\phi},z)}{z^2} dz =0
\end{equation}
admitting a unique solution $A_{\phi}$.
For $|\phi|<\pi/3$ the periods of $\Pi_{A_{\phi}}$ along $\mathbf{a}$ and
$\mathbf{b}$ are defined by
$$
\Omega^{\phi}_{\mathbf{a}}=\Omega_{\mathbf{a}}
=\int_{\mathbf{a}} \frac{dz}{w(A_{\phi},z)}, \quad
 \Omega^{\phi}_{\mathbf{b}}=\Omega_{\mathbf{b}}
=\int_{\mathbf{b}} \frac{dz}{w(A_{\phi},z)}, \quad
$$
which satisfy $\im \Omega_{\mathbf{b}}/\Omega_{\mathbf{a}} >0.$
\subsection{Main theorems}\label{ssc2.3}
Let $y(x)=y(G,x)$ be a solution of \eqref{1.2} labelled by the monodromy
data $G=(g_{ij})\in SL_2(\mathbb{C})$. 
Then we have the following, in which $\wp(u; g_2,g_3)$
is the Weierstrass $\wp$-function satisfying $\wp_u^2=4\wp^3-g_2\wp -g_3$
(\cite{HC}, \cite{WW}).
\begin{thm}\label{thm2.1}
Suppose that $0<\phi<\pi/3$ and that $G=(g_{ij})$ satisfies $g_{11}g_{12}
g_{22}\not=0.$ Then 
$$
y(x)= \wp\bigl(i(x-x_0^+) + O(x^{-\delta}); g_2(A_{\phi}), g_3(A_{\phi})\bigr)
 + \frac{A_{\phi}}{12}
$$
as $x=te^{i\phi} \to \infty$ through the cheese-like strip
$$
S(\phi,t_{\infty}, \kappa_0,\delta_0) =\{x=te^{i\phi}\,|\, \re t>t_{\infty},\,\,
|\im t|<\kappa_0\} \setminus \bigcup_{\sigma\in \mathcal{P}} \{ |x-\sigma|<
\delta_0 \}
$$
with
$$
\mathcal{P}=\{\sigma \,|\, \wp(i(\sigma-x_0^+);g_2(A_{\phi}), g_3(A_{\phi}))
=\infty\}= \{x_0^+ -i \Omega_{\mathbf{a}}\mathbb{Z}-i\Omega_{\mathbf{b}}
\mathbb{Z} \}.
$$
Here $\delta$ is some positive number, $\kappa_0$ a given positive number,
$\delta_0$ a given small positive number, $t_{\infty}=t_{\infty}(\kappa_0,
\delta_0)$ a sufficiently large number depending on $(\kappa_0,\delta_0);$
and 
\begin{align*}
&g_2(A_{\phi})= \frac {A_{\phi}^2}{12}, \quad
g_3(A_{\phi})= \frac {A_{\phi}^3}{216}-1, \quad
\\
&-i x^+_0 \equiv \frac i{2\pi}\Bigl(\Omega_{\mathbf{a}} \log\frac{g_{12}}
{g_{22}} -\Omega_{\mathbf{b}} (\log(g_{11}g_{22})-\pi i)\Bigr)
-ia \Omega_0 \quad \mod \Omega_{\mathbf{a}}\mathbb{Z}+
\Omega_{\mathbf{b}}\mathbb{Z}
\end{align*}
with
$$
\Omega_0=\int^{0^+}_{\infty}\frac{dz}{w(A_{\phi},z)},
$$
in which $0^+$ denotes $0\in \Pi_+$ and the contour $[\infty,0^+]\subset
\Pi_+$ contains
the line from $-\infty$ to $z_2$ along the upper shore of the cut 
$[\infty,z_2].$ 
\end{thm}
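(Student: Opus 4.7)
The plan is to follow the isomonodromy/WKB programme: solve the direct monodromy problem for system \eqref{1.3} by WKB along cycles of the spectral curve, read off the prescribed monodromy data $G$ as (essentially) a point on the Jacobian of $\Pi_{A_{\phi}}$, and invert via the $\vartheta$-function to recover $y(x)$. First I would analyse the turning points of $\mathcal{B}(\lambda,t)$ as $x=te^{i\phi}\to\infty$. After an appropriate rescaling of $\lambda$, the characteristic quantity $-\det\mathcal{B}$ becomes proportional to $(4z^3 - Az^2 + 1)/z^4$, where $A=A(y,y')$ is a rational combination of $y(x)$ and $y'(x)$ that one expects to tend to $A_{\phi}$. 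The WKB exponentials are $\exp\bigl(\pm\tfrac{x}{3}\int w(A,z)z^{-2}\,dz\bigr)$, and in order for the resulting Stokes matrices to stay bounded along the entire ray $\arg x=\phi$, every period of $e^{i\phi}\,w(A,z)z^{-2}\,dz$ must be purely imaginary; this is precisely the Boutroux system \eqref{2.1} and forces $A\to A_{\phi}$.

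Next I would carry out the WKB matching between the canonical solutions $X^{0}_{k}$ at $\lambda=0$ and $Y^{\infty}_{k}$ at $\lambda=\infty$, in order to express the entries of the connection matrix $G$ in terms of integrals on $\Pi_{A_{\phi}}$. By the formulas of Corollary \ref{cor5.2}, ratios such as $g_{12}/g_{22}$ and $g_{11}g_{22}$ come out as exponentials of linear combinations of the periods $\int_{\mathbf{a},\mathbf{b}} w(A_{\phi},z)z^{-2}\,dz$ and of the distinguished base-path integral $\Omega_{0}$, all evaluated on a point $P(x)\in\Pi_{A_{\phi}}$ driven by $y(x)$. Taking logarithms, possible precisely because $g_{11}g_{12}g_{22}\neq 0$, and solving for $P(x)$ in Jacobi coordinates produces the stated formula for $-ix_{0}^{+}$ modulo $\Omega_{\mathbf{a}}\mathbb{Z}+\Omega_{\mathbf{b}}\mathbb{Z}$; the correction $-ia\Omega_{0}$ originates from the factor $\mu^{-(1/2+ia)\sigma_{3}}$ in the formal series at $\mu=\infty$ transported along the contour $[\infty,0^{+}]$.

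The $\vartheta$-function machinery of Section \ref{sc6} then converts the Jacobi-variable data into an explicit Weierstrass $\wp$-value, giving the asymptotic representation $y(x)=\wp(i(x-x_{0}^{+});\,g_{2}(A_{\phi}),\,g_{3}(A_{\phi}))+A_{\phi}/12$ up to an $O(x^{-\delta})$ error. The constant shift $A_{\phi}/12$ together with the stated expressions for $g_{2}$ and $g_{3}$ arise from the standard substitution $z=\wp+A_{\phi}/12$, which transforms $4z^{3}-A_{\phi}z^{2}+1$ into the Weierstrass normal form $4\wp^{3}-g_{2}(A_{\phi})\wp-g_{3}(A_{\phi})$; the factor $i$ inside $\wp$ reflects the passage from the WKB phase variable to the Painlev\'e variable $x$.

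The main obstacle is the justification at the end of Section \ref{sc7}: proving that this formal construction actually agrees with the true solution $y(G,x)$ uniformly on the cheese-like strip $S(\phi,t_{\infty},\kappa_{0},\delta_{0})$. Following Kitaev \cite{Kitaev-1}, \cite{Kitaev-3}, I would set this up as an inverse monodromy problem of Riemann--Hilbert type with jump contours aligned to the WKB Stokes graph on $\Pi_{A_{\phi}}$ and jump matrices matching $G$, $S^{\infty}_{j}$, $S^{0}_{0}$. The crux is to establish a uniform small-norm estimate for the associated singular integral operator as $t\to+\infty$: outside the excised discs of radius $\delta_{0}$ around the lattice $\mathcal{P}$ the jumps are close to the identity and the estimate goes through, but near $\mathcal{P}$ two turning points of \eqref{1.3} coalesce and the WKB approximation degenerates, which is why these discs cannot be included. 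Uniformity in $\im t$ up to $\kappa_{0}$, and as $x$ navigates around $\mathcal{P}$, are the delicate technical points the proof must handle.
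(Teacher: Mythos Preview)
Your outline has the right architecture, but there is a genuine gap in the ``solving for $P(x)$ in Jacobi coordinates'' step. The WKB calculation does not produce a single point on the Jacobian. The integrals $W(z)$ in Proposition~\ref{prop6.1} involve \emph{two} marked points on $\Pi_{A_{\phi}}$, namely $z_{+}=y$ and $z_{-}=(i/2)e^{-i\phi}y^{-1}\Gamma_{0}(t,y,y^{t})$, and after passing to Abel coordinates $u_{\pm}=\int_{\infty}^{z_{\pm}^{+}}dz/w(A_{\phi},z)$ the combination $\log(g_{12}/g_{22})-\tau\log(g_{11}g_{22})$ determines only the \emph{difference} $u_{+}-u_{-}$ (equation~\eqref{7.1}). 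A second relation is needed. The paper obtains it by applying the addition theorem for $\wp$ to compute $\wp(u_{+}+u_{-})$ algebraically from $z_{\pm}$ and $w(z_{\pm})$, and then using the identity $4e^{-i\phi}K_{+}+e^{-2i\phi}\Gamma_{0}^{2}=a_{\phi}+O(t^{-1})$ to show $\wp(u_{+}+u_{-})=-A_{\phi}/12+O(t^{-1})$, i.e.\ $u_{+}+u_{-}=\Omega_{0}+O(t^{-1})$. Only after solving this $2\times 2$ system do you get $u_{+}$ explicitly, and it is this step that produces the $-ia\Omega_{0}$ term in $x_{0}^{+}$: the $(1+2ia)$ from the monodromy formula meets the extra $\Omega_{0}$ from the sum relation, not the $\mu^{-(1/2+ia)\sigma_{3}}$ factor directly as you suggest.

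Your justification paragraph also diverges from the paper. The paper does not run a Riemann--Hilbert small-norm argument. Instead it follows Kitaev's scheme \cite{Kitaev-1}: build the leading terms $(y_{\mathrm{as}},y^{t}_{\mathrm{as}})$ and $(B_{\phi})_{\mathrm{as}}$, feed them back into the \emph{direct} WKB calculation of Section~\ref{sc5}, observe that the resulting monodromy $\mathcal{G}_{\mathrm{as}}(t)$ satisfies $\|\mathcal{G}_{\mathrm{as}}(t)-\mathcal{G}\|\ll t^{-\delta}$ uniformly in a neighbourhood of $\mathcal{G}$, and invoke the local diffeomorphism property of the monodromy map (an implicit-function argument). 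The excised discs in $S(\phi,t_{\infty},\kappa_{0},\delta_{0})$ are not loci of turning-point coalescence; they are where $|y|+|y^{t}|+|y|^{-1}$ blows up and the hypotheses of Propositions~\ref{prop4.1}--\ref{prop4.2} fail. The passage from the a priori smaller domain $\hat{S}$ (which also excludes zeros of $y$ and zeros of $y-$root) to the strip minus only the $\wp$-poles is then handled by the maximum modulus principle, not by RH estimates.
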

\begin{thm}\label{thm2.2}
Suppose that $-\pi/3<\phi <0$ and that $G=(g_{ij})$ satisfies $g_{11}g_{21}
g_{22}\not=0.$ Then $y(x)$ admits an asymptotic representation of the same
form as in Theorem $\ref{thm2.1}$ with the phase shift  
$$
-i x^-_0 \equiv \frac {-i}{2\pi}\Bigl(\Omega_{\mathbf{a}} \log\frac{g_{21}}
{g_{11}} +\Omega_{\mathbf{b}} (\log(g_{11}g_{22})-\pi i)\Bigr)
-ia \Omega_0 \,\,\, \mod \Omega_{\mathbf{a}}\mathbb{Z}+
\Omega_{\mathbf{b}}\mathbb{Z}.
$$
\end{thm}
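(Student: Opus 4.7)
The plan is to parallel the proof of Theorem~\ref{thm2.1}, adapting each step to the sector $-\pi/3 < \phi < 0$. The structural reason such a parallel should work is property~(3) of $A_\phi$, namely $A_{-\phi} = \overline{A_\phi}$, combined with the convention that the labeling of the roots $z_0, z_1$ close to $2^{-1/3}$ is swapped when $\phi$ changes sign. Together these mean that the pair of cycles $(\mathbf{a}, \mathbf{b})$ on $\Pi_{A_\phi}$ for negative $\phi$ is essentially the mirror image across the real axis in the $z$-plane of the pair for positive $\phi$ (compare Figure~\ref{cycles1}(a) with (b)); in particular, one orientation is flipped.

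First I would re-run the WKB analysis of Section~\ref{sc5} for \eqref{1.3} in the new sector. The Stokes graph is reflected, and the dominant/subdominant roles of the WKB solutions across the Stokes lines relevant to the $\lambda = \infty$ asymptotics are interchanged. Consequently, the off-diagonal Stokes multiplier that actually drives the direct monodromy problem now pairs with the entry $g_{21}$ of $G$ instead of $g_{12}$; this accounts for the non-vanishing hypothesis $g_{11} g_{21} g_{22} \neq 0$. Writing down the analog of Corollary~\ref{cor5.2} yields relations expressing the integrals $\int_{\mathbf{a},\mathbf{b}} w(A_\phi, z)/z^2\, dz$ in terms of $\log(g_{21}/g_{11})$ and $\log(g_{11} g_{22})$. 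The sign reversal of the prefactor from $i/(2\pi)$ to $-i/(2\pi)$, as well as the change from $-\Omega_{\mathbf{b}}(\cdots)$ to $+\Omega_{\mathbf{b}}(\cdots)$, will come directly from the flipped cycle orientation and the interchange of dominant Stokes multipliers, respectively; the $-ia\Omega_0$ correction remains invariant because $\Omega_0$ is defined via a fixed contour on $\Pi_+$ independent of $\phi$.

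With this modified Corollary~\ref{cor5.2} in hand, the $\vartheta$-function calculation of Section~\ref{sc6} and the solution-justification argument of Section~\ref{sc7} following Kitaev~\cite{Kitaev-1}, \cite{Kitaev-3} carry through verbatim: the abstract curve $\Pi_{A_\phi}$ has the same genus-one structure for $\phi \in (-\pi/3, \pi/3) \setminus \{0\}$, the Boutroux equations \eqref{2.1} still admit the unique solution $A_\phi$, and the cheese-like strip $S(\phi, t_\infty, \kappa_0, \delta_0)$ is defined symmetrically in $\phi$. This produces the claimed $\wp$-representation with the shift $x_0^-$.

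The main obstacle is careful bookkeeping: tracking orientations of the reflected cycles, the branch of $w(A_\phi, z)$ on each sheet, and which WKB exponential dominates on each Stokes region, so that the signs in the final phase-shift formula agree \emph{exactly} with those in the statement. A sign error at any point in this chain would shift $-ix_0^-$ by a quantity not belonging to $\Omega_{\mathbf{a}}\mathbb{Z} + \Omega_{\mathbf{b}}\mathbb{Z}$ and would therefore produce a different elliptic function, so the verification must be done explicitly rather than deduced from abstract symmetry (for instance via $x \mapsto \overline{x}$, which would also send $a$ to $\overline{a}$ and thus not preserve the equation).
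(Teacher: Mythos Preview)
Your proposal is correct and follows the same route as the paper. In fact the paper has already carried out the ``re-run'' you describe: Proposition~\ref{prop5.1}, Corollary~\ref{cor5.2}, and Proposition~\ref{prop6.1} are each stated for both $0<\phi<\pi/3$ and $-\pi/3<\phi<0$ (the Stokes curve for the latter being Figure~\ref{curve-}, with $\hat{\mathbf{c}}_1$ along the upper shore of the cut and $\hat{J}_1$ replacing $J_1$), and the paper's proof of Theorem~\ref{thm2.2} is then the single sentence ``Theorem~\ref{thm2.2} is proved by the same argument as above''---the point being that Proposition~\ref{prop6.1}(2) differs from (1) only by the substitution $\log(g_{12}/g_{22})\mapsto -\log(g_{21}/g_{11})$, which propagated through the computation of $u_+$ in Section~\ref{ssc7.1} gives exactly the phase shift $-ix_0^-$ in the statement.
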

\begin{rem}\label{rem2.1}
From a relation in the proof of Theorem $\ref{thm2.1}$ we have an expression of
$y'(x)$ for $0<\phi<\pi/3$ and $-\pi/3<\phi <0$ of the form
$$
\frac {iy'(x)+1}{2y(x)^2}=\wp\bigl(i(x-\hat{x}^{\pm}_0)+O(x^{-\delta});
\, g_2(A_{\phi}), g_3(A_{\phi}) ) +\frac{A_{\phi}}{12},
$$
respectively, where $i\hat{x}^{\pm}_0 =i x^{\pm}_0+\Omega_0.$ 
\end{rem}
The expressions of $y(x)$ in Theorems \ref{thm2.1} and \ref{thm2.2} are
determined by $A_{\phi}$ and $x_0=x_0^+$ for $0<\phi<\pi/3,$ $=x_0^-$ for
$-\pi/3<\phi<0$. Since $\Omega_{\mathbf{a},\, \mathbf{b}}$ and $\Omega_0$
depend on $A_{\phi}$, these may be denoted by
$\Omega_{\mathbf{a},\, \mathbf{b}}^{\phi}$ and $\Omega_0^{\phi},$ respectively.
To emphasise this fact, write
$$
y(x)=P(A_{\phi}, x_0(G, \Omega_{\mathbf{a}}^{\phi}, \Omega_{\mathbf{b}}^{\phi},
\Omega_0^{\phi}); x)
$$
for $0<|\phi|<\pi/3.$  
\par
For $\phi$ such that $|\phi- 2m \pi/3|< \pi/3$ $(m\in \mathbb{Z})$,
set $\Omega^{\phi}_{\mathbf{a}, \, \mathbf{b}} =e^{2m\pi i/3} \Omega
_{\mathbf{a},\, \mathbf{b}}^{\phi-2m\pi/3}.$ The period, say, 
$\Omega^{\phi}_{\mathbf{a}}$ may be expressed by the integral on $\Pi_+$ 
\begin{align*}
\Omega^{\phi}_{\mathbf{a}} 
=& \int_{ e^{2m\pi i/3}\mathbf{a}} \frac{dz}{w(A_{\phi},z)}
= \int_{ e^{2m\pi i/3}\mathbf{a}} \frac{dz}{w(e^{2m\pi i/3}A_{\phi-2m\pi/3},z)}
\\
=& e^{2m\pi i/3}
\int_{\mathbf{a}} \frac{d\zeta}{w(A_{\phi-2m\pi/3},\zeta)}
= e^{2m\pi i/3} \Omega_{\mathbf{a}}^{\phi-2m\pi/3} \quad (z=e^{2m\pi i/3}\zeta).
\end{align*}
Furthermore, for $|\phi-2m\pi/3|<\pi/3$ set 
$\Omega_0^{\phi}=e^{2m\pi i/3}\Omega_0^{\phi-2m\pi/3}.$
The following provides an analytic continuation of $y(x)$ beyond the sector
$|\phi|<\pi/3.$
\begin{thm}\label{thm2.3}
Suppose that $0<\phi-2m\pi/3<\pi/3$ $($respectively, $-\pi/3< \phi-2m\pi/3 <0)$
for $m\in \mathbb{Z}\setminus \{0\}.$
Then $y(x)$ admits the expression
$$
y(x)=y(G,x)=P(A_{\phi}, x_0(G^{(m)},\Omega_{\mathbf{a}}^{\phi},
\Omega_{\mathbf{b}}^{\phi},\Omega_0^{\phi});x)
$$
as $x=te^{i\phi} \to \infty$ through the cheese-like strip $S(\phi,t_{\infty},
\kappa_0,\delta_0),$ if $g^{(m)}_{11} g^{(m)}_{12} g^{(m)}_{22}
\not=0$ $($respectively, $g^{(m)}_{11} g^{(m)}_{21} g^{(m)}_{22} \not=0)$,
where
$$
G^{(m)}= \begin{cases}
 (S^0_0 \sigma_1)^m G\sigma_3^m e^{(m\pi/3)(a-i/2)\sigma_3} &\quad 
\text{if $m \ge 1;$} 
\\[0.1cm]
 (\sigma_1S^0_0 )^{n} G\sigma_3^n e^{(n\pi/3)(i/2-a)\sigma_3} &\quad 
\text{if $m =-n \le -1.$}
\end{cases} 
$$
\end{thm}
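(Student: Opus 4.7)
The approach is to reduce the statement in the sector $|\phi - 2m\pi/3|<\pi/3$ to the base case $m=0$ handled by Theorems \ref{thm2.1} and \ref{thm2.2}, by exploiting the $\mathbb{Z}/3$-symmetry of the isomonodromy system \eqref{1.3} and of the elliptic curve $\Pi_A$ (already reflected in the identity $A_{\phi\pm 2\pi/3}=e^{\pm 2\pi i/3}A_\phi$ and $\Omega^\phi_{\mathbf{a},\mathbf{b}}=e^{2m\pi i/3}\Omega^{\phi-2m\pi/3}_{\mathbf{a},\mathbf{b}}$). Concretely, the plan is to rotate the independent variable to $\phi'=\phi-2m\pi/3\in(-\pi/3,\pi/3)$, identify the isomonodromic partner solution in the rotated frame as $y(G^{(m)},\cdot)$ with appropriately transformed monodromy data, apply Theorem \ref{thm2.1} or \ref{thm2.2} there, and finally translate the elliptic expression back through $A_\phi=e^{2m\pi i/3}A_{\phi'}$, $\Omega^\phi_\bullet = e^{2m\pi i/3}\Omega^{\phi'}_\bullet$.

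For the transformation of monodromy data, I would work with $m=1$ first and iterate. Under the rotation $x \mapsto e^{-2\pi i/3}x$, one combines a compensating spectral rotation of $\mu$ so that the dominant exponential structure of \eqref{1.3} at $\mu=\infty$ is preserved up to a shift of the sector index, and the structure at $\mu=0$ is preserved up to the involution induced by $\sigma_1$. At $\mu=\infty$ the cyclic relation $Y^\infty_{k+1}=Y^\infty_k S^\infty_k$ together with the formal-monodromy factor $\mu^{-(1/2+ia)\sigma_3}$ produces the right-hand factor $\sigma_3^{m}e^{(m\pi/3)(a-i/2)\sigma_3}$ (this is exactly the phase accumulated by $\mu^{-(1/2+ia)\sigma_3}$ under analytic continuation by $e^{-2m\pi i/3}$ combined with the sector shift); at $\mu=0$ the normalisation $X^0_k=(i/\sqrt 2)\Theta_0^{\sigma_3}(\sigma_1+\sigma_3+O(\mu))\exp(\cdots)$ together with $X^0_{k+1}=X^0_k S^0_k$ and the monodromy relation $GS^\infty_0 S^\infty_1\sigma_3 e^{\pi(i/2-a)\sigma_3}=S^0_0\sigma_1 G$ yields, after iteration, the left-hand factor $(S^0_0\sigma_1)^m$. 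For $m=-n\le -1$ the reversed direction replaces $S^0_0\sigma_1$ by $\sigma_1 S^0_0$ and conjugates the exponent, which is exactly the second case in the statement.

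With $G^{(m)}$ so identified, the rotated problem has argument $\phi'=\phi-2m\pi/3\in(0,\pi/3)$ (resp.\ $(-\pi/3,0)$), and the nonvanishing hypothesis $g^{(m)}_{11}g^{(m)}_{12}g^{(m)}_{22}\ne 0$ (resp.\ $g^{(m)}_{11}g^{(m)}_{21}g^{(m)}_{22}\ne 0$) is precisely the genericity condition needed to invoke Theorem \ref{thm2.1} (resp.\ \ref{thm2.2}) for the solution $y(G^{(m)},\tilde x)$ in the $\tilde x=e^{-2m\pi i/3}x$-plane, producing an elliptic representation in terms of $A_{\phi'}$ and $x_0(G^{(m)},\Omega^{\phi'}_{\mathbf{a}},\Omega^{\phi'}_{\mathbf{b}},\Omega^{\phi'}_0)$ on a cheese-like strip $S(\phi',t_\infty,\kappa_0,\delta_0)$. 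Pulling back by the rotation, the cheese-like strip is carried to $S(\phi,t_\infty,\kappa_0,\delta_0)$, and the substitutions $A_\phi=e^{2m\pi i/3}A_{\phi'}$, $\Omega^\phi_\bullet=e^{2m\pi i/3}\Omega^{\phi'}_\bullet$ recast the expression as $P(A_\phi,x_0(G^{(m)},\Omega^{\phi}_{\mathbf{a}},\Omega^{\phi}_{\mathbf{b}},\Omega^{\phi}_0);x)$, which is the claimed formula.

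The main obstacle I anticipate is bookkeeping the Stokes/connection structure at both irregular singularities simultaneously: the canonical solutions at $\mu=\infty$ live in sectors of opening $\pi/2$ while those at $\mu=0$ live in sectors of opening $\pi$, so a $2\pi/3$-rotation of $\phi$ does not shift either index by an integer on its own and must be combined with a spectral-variable rotation to produce well-defined permutations of $\{Y^\infty_k\}$ and $\{X^0_k\}$. Getting the precise form of the accumulated phase $e^{(m\pi/3)(a-i/2)\sigma_3}$ right, and verifying that the braiding of $S^0_0$ and $\sigma_1$ gives $(S^0_0\sigma_1)^m$ (for $m\ge 1$) versus $(\sigma_1 S^0_0)^n$ (for $m=-n\le -1$) by consistency with the monodromy identity $GS^\infty_0S^\infty_1\sigma_3 e^{\pi(i/2-a)\sigma_3}=S^0_0\sigma_1 G$, is where the care is required; the rest of the argument is then a mechanical translation of Theorems \ref{thm2.1}–\ref{thm2.2}.
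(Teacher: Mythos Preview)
Your proposal is correct and follows essentially the same route as the paper: exploit the $\mathbb{Z}/3$-symmetry of \eqref{1.2}--\eqref{1.3} via $(\phi,y,x,\lambda)\mapsto(\tilde\phi,\tilde y,\tilde x,\tilde\lambda)=(\phi-2m\pi/3,\,e^{-2m\pi i/3}y,\,e^{-2m\pi i/3}x,\,e^{-2m\pi i/3}\lambda)$, identify the connection matrix of the rotated system as $G^{(m)}$ by reading off $\tilde Y^\infty_0(\tilde\lambda)=\hat Y^\infty_{2m}(\lambda)e^{(2m\pi i/3)(1/2+ia)\sigma_3}$ and $\tilde Y^0_0(\tilde\lambda)=\hat Y^0_0(\lambda)$, then apply Theorems \ref{thm2.1}--\ref{thm2.2} in the $\tilde x$-frame and pull back via the scaling identity $P_*(u,A)=e^{\mp 2\pi i/3}P_*(e^{\pm 2\pi i/3}u,e^{\pm 2\pi i/3}A)$ for the $\wp$-function. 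The paper derives $G^{(m)}=G S^\infty_0\cdots S^\infty_{2m-1}e^{(2m\pi i/3)(1/2+ia)\sigma_3}$ directly and then invokes Proposition \ref{prop3.2} to rewrite it in the $(S^0_0\sigma_1)^m$-form, which is the clean way to carry out the bookkeeping you flagged as the main obstacle.
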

\begin{rem}\label{rem2.2}
The matrix $G^{(m)}$ has another expression of the form
$$
G^{(m)}= \begin{cases}
 G(S^{\infty}_0 S^{\infty}_1\sigma_3 e^{\pi(i/2-a) \sigma_3})^m 
\sigma_3^m e^{(m\pi/3)(a-i/2)\sigma_3} &\quad 
\text{if $m \ge 1;$} 
\\[0.1cm]
 G(\sigma_3 e^{\pi(a-i/2)\sigma_3}S^{\infty}_1S^{\infty}_0 )^{n} 
\sigma_3^n e^{(n\pi/3)(i/2-a)\sigma_3} &\quad 
\text{if $m =-n \le -1.$}
\end{cases} 
$$
\end{rem}
\subsection{Examples}\label{ssc2.4}
For simplicity suppose that $\epsilon=1$ and $b=2$ in equation \eqref{1.1}.
Let $G=({g}_{ij})$ with 
${g}_{11}{g}_{22} -{g}_{12}{g}_{21}=1$ be the monodromy data 
in Kitaev-Vartanian \cite{KV1}, \cite{KV2}, which coincide with ours above.
Suppose that ${g}_{11}{g}_{12}{g}_{21}{g}_{22}\not=0.$ 
Then \cite[Theorem 3.1]{KV1}, \cite[Theorems 2.1 and 2.3]{KV2} with 
$\varepsilon_1=\varepsilon_2=0$ provide general solutions of \eqref{1.1}
as in the following examples, in which we write
$l({g}_{11}{g}_{22})=i(2\pi)^{-1} \log({g}_{11}{g}_{22})$.
\begin{exa}\label{exa2.1}
If $|\re l({g}_{11}{g}_{22})| <1/6,$ equation \eqref{1.1} admits 
a solution of the form
\begin{align*}
u(\tau)&=2^{-1/3} \tau^{1/3}+2^{1/2}3^{-1/4} e^{3\pi i/4} l({g}_{11}
{g}_{22})^{1/2} \cosh(\chi(\tau)),
\\
\chi(\tau)&= i2^{1/3}3^{3/2} \tau^{2/3} +l({g}_{11}{g}_{22}) 
\log(2^{1/3}3^{3/2} \tau^{2/3}) +\gamma({g}_{11}{g}_{22}, {g}_{12}
/{g}_{22}) +o(\tau^{-\tilde{\delta}}) 
\end{align*}
as $\tau \to +\infty$, where $\gamma({g}_{11}{g}_{22}, {g}_{12}
/{g}_{22})$ is a constant expressed by $({g}_{11}{g}_{22},
 {g}_{12}/{g}_{22})$, and $\tilde{\delta}$ is some positive number. 
\end{exa}
\begin{exa}\label{exa2.2}
For $\re l({g}_{11}{g}_{22})\in (0,1),$ equation \eqref{1.1} admits 
a solution of the form
\begin{equation*}
u(\tau)=2^{-1/3} \tau^{1/3} \Bigl(1-\frac{3}{2\sin^2(\tilde{\chi}(\tau)/2)}
\Bigr)
= 2^{-1/3} \tau^{1/3} \frac{\sin(\tilde{\chi}(\tau)/2 -\chi_0)
\sin(\tilde{\chi}(\tau)/2 +\chi_0)}{\sin^2 (\tilde{\chi}(\tau)/2)}
\end{equation*}
with
\begin{align*}
\chi_0 =& -\pi/2+(i/2)\log(2+\sqrt{3}), 
\\
\tilde{\chi}(\tau)=& 2^{1/3}3^{3/2}\tau^{2/3} + l_*({g}_{11}{g}_{22})
\log(2^{1/3}3^{3/2} \tau^{2/3}) +\gamma_*({g}_{ij})
+o(\tau^{-\tilde{\delta}}) 
\end{align*}
as $\tau \to +\infty$ in a strip $|\im \tau^{2/3}|\ll 1.$ 
Here  
$l_*({g}_{11}{g}_{22}) =(2\pi)^{-1}\log(-{g}_{11}{g}_{22})$
 $(\in \mathbb{R})$ if $\re l({g}_{11}{g}_{22})=1/2,$ and $=
-i(l({g}_{11}{g}_{22})-1/2)$ otherwise; 
and $\gamma_*({g}_{ij})$ is a constant expressed by  
$(l_*({g}_{11}{g}_{22}), {g}_{11}{g}_{12},
{g}_{21}{g}_{22})$ if $\re l({g}_{11}{g}_{22})=1/2,$ and 
by $(l({g}_{11}{g}_{22}), {g}_{11}{g}_{12})$ otherwise.
\end{exa}
By the change of variables $\tau^2=(x/3)^3,$ $\tau u=(x/3)^2 y$, these
solutions are taken to solutions of \eqref{1.2} on the positive real axis.
Proposition \ref{prop3.3}
guarantees the transfer between solutions of \eqref{1.1} and \eqref{1.2} with
labels. Observing
${g}_{11}{g}_{12}={g}_{11}{g}_{22} \cdot
{g}_{12}/{g}_{22}$ and
${g}_{21}{g}_{22}={g}_{11}{g}_{22} \cdot
{g}_{21}/{g}_{11}$, and applying 
Theorems \ref{thm2.1} and \ref{thm2.2}, we have elliptic representations
of these solutions for $-\pi/3<\phi<0$ and $0<\phi<\pi/3.$
\par
In the case where ${g}_{12}=0$ or ${g}_{21}=0,$ \cite[Theorems 3.2
and 3.3]{KV1} with $\varepsilon_1=\varepsilon_2=0$ give one-parameter 
solutions as follows:
\begin{exa}\label{exa2.3}
Suppose that ${g}_{21}$ or ${g}_{12}=0$ and that ${g}_{11}{g}_{22}=1.$ 
Then \eqref{1.1} admits
\begin{equation*}
u(\tau)=2^{-1/3} \tau^{1/3}+ \frac{(s^0_0- ie^{-\pi a})c_*^{ia}}
{2\cdot 3^{1/4}\pi^{1/2}} \exp\bigl(\epsilon_* i(2^{1/3}3^{3/2}\tau^{2/3}
+k_*\pi/4) \bigr)(1+o(\tau^{-\tilde{\delta}})),
\end{equation*}
as $\tau \to +\infty.$ Here
$s^0_0-i e^{-\pi a}={g}_{12}/{g}_{22},$ $c_*=2-\sqrt{3},$ $\epsilon_*
=-1,$ $k_*=-1$ if ${g}_{21}=0;$ and
$s^0_0-i e^{-\pi a}=-{g}_{21}/{g}_{11},$ $c_*=2+\sqrt{3},$ $\epsilon_*
=1,$ $k_*=3$ if ${g}_{12}=0.$ 
\end{exa}
If ${g}_{11}{g}_{22} {g}_{12}\not=0$, ${g}_{21}=0$ 
(respectively, ${g}_{11}{g}_{22} {g}_{21}\not=0$, ${g}_{12}=0$),
Theorem \ref{thm2.1} for $0<\phi<\pi/3$ (respectively, \ref{thm2.2} for $-\pi/3
<\phi<0$) applies to the corresponding solution of \eqref{1.2}. In the case, 
say ${g}_{21}=0$, this solution is represented by the $\wp$-function for
$0<\phi<\pi/3$, and is truncated for $-\pi <\phi <0.$ 
\section{Isomonodromy deformation and monodromy data}\label{sc3}
\subsection{Isomonodromy deformation}\label{ssc3.1}
Equation \eqref{1.1} governs isomonodromy deformation of the linear system
\begin{align}\label{3.1}
 &\frac{dU}{d\mu}= \mathcal{U}(\mu,\tau) U,
\\
\notag
 &\mathcal{U} (\mu,\tau)= -2i \tau \mu\sigma_3 +2\tau 
\begin{pmatrix}  0  &  2i \epsilon e^{i\varphi} \\
     -(\epsilon/4)e^{-i \varphi}(u^{\tau}/u -1/\tau -i\varphi_{\tau}) & 0
\end{pmatrix}
\\
\notag
& \phantom{---} -\frac 1{\mu} \Bigl(ia + \frac{\tau}2 (u^{\tau}/u -i\varphi
_{\tau}) \Bigr)\sigma_3 +\frac 1{\mu^2} \begin{pmatrix} 
0  &  2\epsilon e^{i\varphi}(ia -i\tau \varphi_{\tau}/2) \\
-i ue^{-i\varphi}  &  0        \end{pmatrix}
\end{align}
with $\varphi_{\tau}=(d/d\tau)\varphi=2a/\tau+b/u$, that is, the monodromy 
data remain
invariant under small change of $\tau$ if and only if $u^{\tau}=(d/d\tau)u$
holds and $u(\tau)$ solves \eqref{1.1} 
\cite[Propositions 1.1, 1.2 and 2.1]{KV1}.
Let us change \eqref{3.1} into system \eqref{1.3} associated with 
\eqref{1.2}. After the transformation
$$
U=\begin{pmatrix} e^{i\varphi/2} & 0 \\  0  & e^{-i\varphi/2} \end{pmatrix}
\begin{pmatrix} \sqrt{\epsilon}\tau^{3/4} & 0 \\  0  & \tau^{-3/4}/
\sqrt{\epsilon}  \end{pmatrix}   \hat{U},\quad 
\mu =\sqrt{2/\kappa}\tau^{1/2} \hat{\mu},     
$$
put
$$
\tau^2=\kappa \xi, \quad \tau u= q/\epsilon, \quad u^{\tau}+u/\tau=
2(\epsilon \kappa)^{-1}q^{\xi},
\quad \hat{U}= \begin{pmatrix} (2/\kappa)^{1/4}  &  0  \\  0 & (2/\kappa)^{-1/4}
\end{pmatrix} V 
$$
with $\kappa$ chosen so that $\epsilon \kappa b =2.$
Then \eqref{3.1} becomes
\begin{align*}
&\frac{dV}{d\hat{\mu}}=\mathcal{V}(\hat{\mu},\xi) V,
\\
&\mathcal{V}(\hat{\mu},\xi) = -4i\xi \hat{\mu}\sigma_3 
+\begin{pmatrix}  0 & 4i \\  -\xi (2\xi q^{\xi}/q -2(1+ia) -2i\xi/q) & 0
\end{pmatrix}
\\
&\phantom{---} - \frac 1{\hat{\mu}} \Bigl(\xi \frac{q^{\xi}}q -\frac 12
 -\frac{i\xi}q \Bigr) \sigma_3 -\frac i{\hat{\mu}^2} \begin{pmatrix}
 0 & 1/q \\ q & 0 \end{pmatrix}.
\end{align*}
The further change of variables 
$$
V=\begin{pmatrix}  -i/\sqrt{q}  &  0 \\  0 & i \sqrt{q} \end{pmatrix}\Psi,
\quad    q=(x/3)^2y, \quad \xi=(x/3)^3, \quad q^{\xi}=y^x +2y/x, \quad
 (x/3)\hat{\mu}=\lambda/2
$$
with $x=t e^{i\phi},$ i.e. $t=|x|,$ $\phi=\arg x$ and $y^x=e^{-i\phi}y^t$ 
takes the system above to \eqref{1.3}:
\begin{equation*}
\frac{d\Psi}{d\lambda} =\frac t3 \mathcal{B}(\lambda,t) \Psi,\phantom{---------}
\end{equation*}
whose right-hand side is written in the form
\begin{align}\label{3.2}
\mathcal{B}(\lambda, t)&= b_1 \sigma_1 + b_2 \sigma_2 +b_3 \sigma_3,
\\
\notag
b_1 &= -(i/2)(2e^{i\phi} y + i\Gamma_0(t,y,y^t)y^{-1}) +2ie^{i\phi} \lambda^{-2},
\\
\notag
b_2 &= (1/2) (2e^{i\phi} y - i\Gamma_0(t,y,y^t)y^{-1}),
\\
\notag
b_3 &= -ie^{i\phi}\lambda - (\Gamma_0(t,y,y^t)+ 3(1/2+ia)t^{-1})\lambda^{-1},
\\
\notag
& \Gamma_0(t,y,y^t)=\frac{y^t}y -\frac{ie^{i\phi}}y -\frac{1+3ia}t.
\end{align}
In the linear systems above, $u,$ $u^{\tau};$ $q,$ $q^{\xi}$; $y,$ $y^t$
are arbitrary complex parameters or functions, and 
$2(\epsilon\kappa)^{-1}q^{\xi}=u^{\tau}+u/\tau$, $q^{\xi}=y^x+2y/x$ and
$y^x=e^{-i\phi}y^t$ are compatible with their
derivatives.
\begin{prop}\label{prop0}
System \eqref{1.3} admits isomonodromy property if and only if $y^t=(d/dt)y$
holds and $y=y(e^{i\phi}t)=y(x)$ solves equation \eqref{1.2}. 
\end{prop}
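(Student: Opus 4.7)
The plan is to transfer the isomonodromy property of system \eqref{3.1}, proved by Kitaev and Vartanian \cite{KV1}, through the explicit chain of invertible substitutions exhibited in this subsection. Their result states that \eqref{3.1} is isomonodromic if and only if $u^{\tau}=du/d\tau$ and $u(\tau)$ solves \eqref{1.1}. Since the three gauge transformations $U\to \hat U\to V\to \Psi$ are each implemented by invertible $2\times 2$ matrices independent of the spectral variable (diagonal matrices depending on $\varphi$, $q$, and on scalars), and the rescalings $\mu\to\hat\mu$, $(x/3)\hat\mu=\lambda/2$ together with the reparametrisations $\tau^2=\kappa\xi$, $\xi=(x/3)^3$, $x=te^{i\phi}$ are all invertible, the monodromy structure of \eqref{1.3} at $\lambda=0$ and $\lambda=\infty$ is in bijective correspondence with that of \eqref{3.1} at $\mu=0$ and $\mu=\infty$ (the canonical normalisations being dealt with in Section \ref{ssc3.2}). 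Hence the isomonodromy property is preserved in both directions.

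It then remains to check that "$u^{\tau}=du/d\tau$ together with \eqref{1.1}" corresponds exactly to "$y^t=dy/dt$ together with \eqref{1.2}" under these substitutions. From $\tau u=q/\epsilon$ and $\tau^2=\kappa\xi$, $\tau$-differentiation yields $u^{\tau}+u/\tau=2(\epsilon\kappa)^{-1} q^{\xi}$ whenever $q^{\xi}=dq/d\xi$, so the compatibility relation imposed in the text is equivalent to $u^{\tau}=du/d\tau$. Similarly $q=(x/3)^2 y$, $\xi=(x/3)^3$ give $q^{\xi}=y^x+2y/x$ equivalent to $y^x=dy/dx$, and $x=te^{i\phi}$ turns $y^x=e^{-i\phi}y^t$ into $y^t=dy/dt$. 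The remaining piece—that the composite change \eqref{1.*} carries \eqref{1.1} into \eqref{1.2}—is a straightforward chain-rule calculation, already implicit in the derivation of \eqref{1.3}.

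The main obstacle is bookkeeping the extra terms $(\partial_{\tau} D)D^{-1}$ contributed by the $\tau$-dependence of the gauge factors: each such factor modifies the deformation matrix of the Lax pair (the coefficient of $\partial_{\tau}$) by an additive term, and one must verify that after all three gauges and the reparametrisation $\tau\leftrightarrow t$ the resulting $t$-deformation matrix is well defined and consistent with \eqref{1.3}. The cleanest way to settle this is to write down explicitly the deformation matrix $\mathcal A(\lambda,t)$ associated with \eqref{1.3} and verify the zero-curvature condition
\begin{equation*}
\frac{\partial}{\partial t}\Bigl(\frac{t}{3}\mathcal{B}(\lambda,t)\Bigr)
-\frac{\partial}{\partial\lambda}\mathcal{A}(\lambda,t)
+\Bigl[\frac{t}{3}\mathcal{B}(\lambda,t),\,\mathcal{A}(\lambda,t)\Bigr]=0
\end{equation*}
by matching coefficients of $\lambda^k$ for $k\in\{1,0,-1,-2,-3\}$; the resulting algebraic relations reduce precisely to $y^t=dy/dt$ and equation \eqref{1.2}. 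Alternatively, one may argue abstractly that the $\tau$-derivative in \eqref{3.1} and the $t$-derivative in \eqref{1.3} differ only by the action of the (invertible) gauge and reparametrisation, so that invariance of monodromy under one is equivalent to invariance under the other. Either route yields the stated "if and only if".
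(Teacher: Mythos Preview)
Your proposal is correct and follows the same approach the paper intends: the paper states Proposition~\ref{prop0} immediately after the chain of invertible gauge transformations and reparametrisations, with no explicit proof, relying on the cited result \cite[Propositions 1.1, 1.2 and 2.1]{KV1} for system \eqref{3.1} and on the compatibility relations $u^{\tau}+u/\tau=2(\epsilon\kappa)^{-1}q^{\xi}$, $q^{\xi}=y^x+2y/x$, $y^x=e^{-i\phi}y^t$ already recorded in the text. Your write-up simply makes this transfer explicit, and the alternative zero-curvature verification you sketch is a valid independent check.
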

\subsection{Monodromy data}\label{ssc3.2}
For each $j\in\mathbb{Z}$ system \eqref{1.3} admits the matrix solutions  
\begin{equation}\label{3.3}
\hat{Y}^{\infty}_{j}(\lambda)= (I + O(\lambda^{-1}))\lambda^{-(1/2+ia)\sigma_3}
\exp(-(i/6)e^{i\phi}t\lambda^2 \sigma_3)
\end{equation}
as $\lambda \to \infty$ through the sector 
$|\arg \lambda +\phi/2 -j\pi /2|<\pi/2,$ and
\begin{equation}\label{3.4}
\hat{Y}^0_j(\lambda)=(i/\sqrt{2}) (\sigma_1+\sigma_3 +O(\lambda)) \exp(-(2i/3)
e^{i\phi} t\lambda^{-1} \sigma_3)
\end{equation}
as $\lambda \to 0$ through the sector 
$|\arg \lambda-\phi-j\pi|<\pi.$ 
The Stokes matrices are such that
$$
\hat{Y}^{\infty}_{j+1}(\lambda)=\hat{Y}^{\infty}_j(\lambda)\hat{S}^{\infty}_j,
\quad
\hat{Y}^{0}_{j+1}(\lambda)=\hat{Y}^{0}_j(\lambda)\hat{S}^{0}_j,
$$
and the connection matrix $\hat{G}=(\hat{g}_{ij})$ is defined by
\begin{equation}\label{3.5}
\hat{Y}^{\infty}_0(\lambda)=\hat{Y}^0_0(\lambda) \hat{G}, \quad  
\hat{g}_{11}\hat{g}_{22}-\hat{g}_{12}\hat{g}_{21}=1.
\end{equation}
The Stokes matrices satisfy
\begin{equation*}
\hat{S}^{\infty}_{k+2}=\sigma_3 e^{-\pi(a-i/2)\sigma_3} 
\hat{S}^{\infty}_k e^{\pi(a-i/2)\sigma_3}\sigma_3, \quad 
\hat{S}^0_k=\sigma_1 \hat{S}^0_{k+1} \sigma_1,
\end{equation*}
for $k\in \mathbb{Z}$, and the monodromy manifold is given by
$$
\hat{G}\hat{S}^{\infty}_0\hat{S}^{\infty}_1 \sigma_3 e^{\pi(i/2-a)\sigma_3}
=\hat{S}^0_0\sigma_1 \hat{G} 
$$
with
$$
\hat{S}^{\infty}_0 =\begin{pmatrix} 1 & 0 \\ \hat{s}^{\infty}_0 & 1 
\end{pmatrix}, \quad
\hat{S}^{\infty}_1 =\begin{pmatrix} 1 & \hat{s}^{\infty}_1 \\ 0 & 1 
\end{pmatrix}, \quad
\hat{S}^{0}_0 =\begin{pmatrix} 1 & \hat{s}^{0}_0 \\ 0 & 1 \end{pmatrix}. 
$$
These monodromy data and their relations are obtained by the same argument as 
in \cite[Section 2]{KV1}.
\par
Let $G=({g}_{ij})$ be the monodromy data for system \eqref{3.1} given in
\cite{KV1}, \cite{KV2}. This connection matrix is defined by
$$
{Y}^{\infty}_0(\mu) =X^0_0(\mu) G.
$$
Here ${Y}^{\infty}_0(\mu)$ and $X^0_0(\mu)$ are matrix solutions of 
system \eqref{3.1} as follows:   
$$
{Y}^{\infty}_k(\mu) =(I+O(\mu^{-1})) \mu^{-(1/2 +ia)\sigma_3} \exp
(-i\tau \mu^2\sigma_3)
$$
as $\mu\to \infty$ through the sector  
$|\arg \mu +\arg \tau^{1/2} -\pi k/2|<\pi/2$, and
\begin{align*}
X^0_k(\mu) =&(i/\sqrt{2}) \Theta^{\sigma_3}_0 (\sigma_1+\sigma_3 +O(\mu))
\exp (-i\sqrt{\tau\epsilon b} \,\mu^{-1}\sigma_3),
\\
\Theta_0 =&(\epsilon b)^{1/4}\tau^{-1/4}(-u e^{-i \varphi}/\tau)^{-1/2}
\end{align*}
as $\mu\to 0$ through the sector  
$|\arg \mu -\arg(\tau\varepsilon b)^{1/2}-\pi k |<\pi$ 
\cite[Proposition 2.2]{KV1}.
Furthermore Stokes matrices are defined by
$$
{Y}^{\infty}_{j+1}(\lambda)={Y}^{\infty}_j(\lambda){S}^{\infty}_j, 
\quad
X^{0}_{j+1}(\lambda)=X^{0}_j(\lambda){S}^{0}_j, 
$$
and the monodromy manifold for \eqref{3.1} is given by 
\begin{equation}\label{3.51}
G {S}^{\infty}_0 {S}^{\infty}_1 \sigma_3 
e^{\pi(i/2-a)\sigma_3}={S}^0_0\sigma_1 G 
\end{equation}
with
$$
{S}^{\infty}_0 =\begin{pmatrix} 1 & 0 \\ {s}^{\infty}_0 & 1 
\end{pmatrix}, \quad
{S}^{\infty}_1 =\begin{pmatrix} 1 & {s}^{\infty}_1 \\ 0 & 1 
\end{pmatrix}, \quad
{S}^{0}_0 =\begin{pmatrix} 1 & {s}^{0}_0 \\ 0 & 1 \end{pmatrix}. 
$$
For $k\in \mathbb{Z},$
\begin{equation}\label{3.6}
{S}^{\infty}_{k+2}=\sigma_3 e^{-\pi(a-i/2)\sigma_3} 
{S}^{\infty}_k e^{\pi(a-i/2)\sigma_3}\sigma_3, \quad 
{S}^0_k=\sigma_1 {S}^0_{k+1} \sigma_1.
\end{equation}
As shown in \cite[Theorems 3.1, 3.2, 3.3]{KV1} and \cite[Theorems 2.1, 2.2,
2.3]{KV2} solutions of \eqref{1.1} are parametrised by the coordinates of
the monodromy manifold ${g}_{11}
{g}_{22}$, ${g}_{12}/{g}_{22},$ ${g}_{21}/{g}_{11},$ provided that \eqref{3.1}
is an isomonodromy system governed by \eqref{1.1}. 
The following relation suggests that we are allowed to use the same monodromy 
invariants in parametrising our solutions of \eqref{1.2} as in \cite{KV1} 
and \cite{KV2} (cf. Examples \ref{exa2.1}, \ref{exa2.2}, \ref{exa2.3}).
\begin{prop}\label{prop3.3}
Let $(Y^{\infty,*}_{0}(\lambda), \hat{Y}^0_0(\lambda))
=(\hat{Y}^{\infty}_{0}(\lambda) \Theta_{0,*}^{-\sigma_3}, \hat{Y}^0_0(\lambda))$
with $\Theta_{0,*}= \Theta_0 c_0^{1/2+ia}$ be a pair of matrix solutions of
\eqref{1.3} near $\lambda=\infty$ and $0,$ where  
$c_0= (3/2) \sqrt{\epsilon b} \, \tau^{1/2}x^{-1}.$ Then, for this pair,
the corresponding Stokes matrices and connection matrix coincide with
$S^{\infty}_0,$ $S^{\infty}_1$, $S^0_0$ and $G$ for $(Y^{\infty}_0(\mu),
X^0_0(\mu))$ of \eqref{3.1}.
\end{prop}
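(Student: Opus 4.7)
The plan is to verify Proposition \ref{prop3.3} by composing the chain of changes of variable in Section \ref{ssc3.1} and tracking how the canonical solutions, and hence the monodromy data, are transported from \eqref{3.1} to \eqref{1.3}. The key observation is that every intermediate substitution $U\mapsto\hat U\mapsto V\mapsto\Psi$ is of the form $W_{\mathrm{old}}=DW_{\mathrm{new}}$ with $D$ a diagonal matrix independent of the spectral variable, together with a dilation of that variable.

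First I would identify the overall substitution: composing the three dependent-variable changes and using $q=\epsilon\tau u$ together with $\kappa=2/(\epsilon b)$, a short calculation reduces the product of the three diagonal factors to $U=\Theta_0^{\sigma_3}\Psi$, while the three spectral changes of variable compose to $\mu=c_0\lambda$ with $c_0=(3/2)\sqrt{\epsilon b}\,\tau^{1/2}x^{-1}$, the very constant appearing in the statement.

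Next I would show that
\[
Y^{\infty,*}_j(\lambda):=\Theta_0^{-\sigma_3}Y^{\infty}_j(c_0\lambda),\qquad
X^{0,*}_j(\lambda):=\Theta_0^{-\sigma_3}X^0_j(c_0\lambda)
\]
are matrix solutions of \eqref{1.3} that realise the canonical asymptotic normalisations \eqref{3.3} and \eqref{3.4}. The two nontrivial identities driving this are $\tau c_0^2=x/6$, which converts $\exp(-i\tau\mu^2\sigma_3)$ into $\exp(-(i/6)e^{i\phi}t\lambda^2\sigma_3)$, and $\sqrt{\tau\epsilon b}\,c_0^{-1}=2x/3$, which converts $\exp(-i\sqrt{\tau\epsilon b}\,\mu^{-1}\sigma_3)$ into $\exp(-(2i/3)e^{i\phi}t\lambda^{-1}\sigma_3)$. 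The remaining $\lambda$-independent factor $c_0^{-(1/2+ia)\sigma_3}$ in the infinity asymptotic is precisely absorbed by $\Theta_{0,*}^{-\sigma_3}=\Theta_0^{-\sigma_3}c_0^{-(1/2+ia)\sigma_3}$, yielding $Y^{\infty,*}_0(\lambda)=\hat Y^{\infty}_0(\lambda)\Theta_{0,*}^{-\sigma_3}$; at $\lambda=0$ the prefactor $\Theta_0^{-\sigma_3}$ cancels the $\Theta_0^{\sigma_3}$ already present in $X^0_0(\mu)$, giving $X^{0,*}_0(\lambda)=\hat Y^0_0(\lambda)$ exactly.

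Once this is in hand, since left-multiplication by the constant matrix $\Theta_0^{-\sigma_3}$ commutes with right-multiplication by any constant matrix, the defining relations $Y^{\infty}_{j+1}(\mu)=Y^{\infty}_j(\mu)S^{\infty}_j$, $X^0_{j+1}(\mu)=X^0_j(\mu)S^0_j$ and $Y^{\infty}_0(\mu)=X^0_0(\mu)G$ evaluated at $\mu=c_0\lambda$ transfer verbatim to the starred solutions, simultaneously delivering $S^{\infty,*}_j=S^{\infty}_j$, $S^{0,*}_j=S^0_j$ and $G^{*}=G$ for the pair $(Y^{\infty,*}_0,\hat Y^0_0)$, which is the claim. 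The main technical obstacle is the bookkeeping for Stokes sectors: under $\mu=c_0\lambda$ a sector in $\mu$ is rotated by $-\arg c_0$ in $\lambda$, and to make the sector index $j$ in \eqref{3.3} match that in the $\mu$-normalisation one uses $\arg c_0=\arg\tau^{1/2}-\phi$ together with $\tau^2=2(x/3)^3/(\epsilon b)$; consistent branches of $\sqrt{\epsilon b}$ and of $(-ue^{-i\varphi}/\tau)^{-1/2}$ must also be fixed so that $\Theta_0^{\sigma_3}$ and $c_0^{(1/2+ia)\sigma_3}$ are single-valued along the deformation.
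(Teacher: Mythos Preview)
Your proposal is correct and, for the identification of the composed gauge $U=\Theta_0^{\sigma_3}\Psi$, $\mu=c_0\lambda$ and the transfer of the connection matrix, it matches the paper's argument exactly. The one point of divergence is how the Stokes matrices are handled: you transport them directly by observing that the defining relations $Y^{\infty}_{j+1}=Y^{\infty}_jS^{\infty}_j$ and $X^0_{j+1}=X^0_jS^0_j$ survive left-multiplication by the constant diagonal $\Theta_0^{-\sigma_3}$ and the spectral dilation $\mu=c_0\lambda$, after checking (as you note) that the sectors align via $\arg c_0+\arg\tau^{1/2}=\phi/2$. The paper instead argues indirectly: once $G^*=G$ is established, it invokes the monodromy-manifold relation $GS^{\infty,*}_0S^{\infty,*}_1\sigma_3e^{\pi(i/2-a)\sigma_3}=S^{0,*}_0\sigma_1G$ and the fact (from \cite[p.~1172]{KV1}) that this equation determines the Stokes multipliers uniquely in terms of the $g_{ij}$, so they must coincide with those of \eqref{3.51}. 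Your route is more self-contained and avoids the external reference, at the cost of the sector bookkeeping; the paper's route sidesteps that bookkeeping but leans on a structural fact about the monodromy manifold. Either is fine here, and your version in fact also yields Remark~\ref{rem3.2} as an immediate byproduct.
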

\begin{proof}
Note that \eqref{3.1} is changed into \eqref{1.3} by the transformation
$U=\Theta_0^{\sigma_3} \Psi,$ $\mu=c_0 \lambda$ with $c_0= (3/2) \sqrt{\epsilon
b} \, \tau^{1/2}x^{-1}.$ Set $Y^{\infty,*}_{0}(\lambda)=\hat{Y}^0_0(\lambda)
G^*.$ Then 
$$
(\Theta_0^{-\sigma_3} {Y}^{\infty}_0(c_0\lambda), \Theta_0^{-\sigma_3}
X^0_0(c_0\lambda) )= (\hat{Y}^{\infty}_0(\lambda) \Theta_0^{-\sigma_3}
c_0^{-(1/2+ia)\sigma_3}, \hat{Y}^0_0(\lambda))
=(Y^{\infty,*}_{0}(\lambda), \hat{Y}^0_0(\lambda))
$$ 
solves \eqref{1.3}. Insertion of this into ${Y}^{\infty}_0(\mu)=X^0_0(\mu)
G$ yields $G={G}^*$. Let $S^{\infty,*}_0,$ $S^{\infty,*}_1$ and $S^{0,*}_0$
be the Stokes matrices for $(Y^{\infty,*}_0(\lambda),\hat{Y}^0_0(\lambda))$.
Then the equation of the monodromy manifold is 
$$
GS^{\infty,*}_0S^{\infty,*}_1\sigma_3e^{\pi(i/2-a)\sigma_3}=S^{0,*}_0
\sigma_1 G,
$$
which yields the entries of $S_0^{\infty,*},$ $S_1^{\infty,*}$ and $S_0^
{0,*}$ in terms of $g_{ij}$ coinciding with those of $S^{\infty}_0,$
$S^{\infty}_1$ and $S^0_0$ derived from \eqref{3.51} as in \cite[p. 1172]{KV1}.
This completes the proof.
\end{proof}
\begin{rem}\label{rem3.2}
We have $G=\hat{G} \Theta_0^{-\sigma_3}c_0^{-(1/2+ia)\sigma_3}
=\hat{G}\Theta_{0,*}^{-\sigma_3}$, 
$ S^{\infty}_m=\Theta_{0,*}^{\sigma_3}
\hat{S}_m^{\infty} \Theta_{0,*}^{-\sigma_3}$ and $S^0_m=\hat{S}_m^0$. 
\end{rem}
Equation \eqref{3.51} of the monodromy manifold may be extended.
\begin{prop}\label{prop3.1}
For $m=1, 2, 3, \ldots,$
\begin{align*}
& {G}{S}^{\infty}_0 {S}^{\infty}_1 \cdots {S}^{\infty}_{2m-2}{S}^{\infty}_{2m-1}
\sigma_3^m e^{m\pi (i/2-a)\sigma_3} = {S}^0_0 \cdots {S}^0_{m-1} 
\sigma_1^m {G},
\\
& {G}{S}^{\infty}_{-1} {S}^{\infty}_{-2} \cdots 
{S}^{\infty}_{-2m+1} {S}^{\infty}_{-2m}
\sigma_3^m e^{m\pi (a-i/2)\sigma_3} = {S}^0_{-1} \cdots {S}^0_{-m} 
\sigma_1^m {G}.
\end{align*}
\end{prop}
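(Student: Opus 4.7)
The plan is to prove both identities by induction on $m$, using the periodicity relations (3.6) for the inductive step and (3.51) as the base case. First I would introduce the shorthand $M:=\sigma_3 e^{\pi(i/2-a)\sigma_3}$, observing that $\sigma_3^m e^{m\pi(i/2-a)\sigma_3}=M^m$ and, because $\sigma_3^2=I$ and the signs in the exponents are opposite, $\sigma_3^m e^{m\pi(a-i/2)\sigma_3}=M^{-m}$. In this notation the relations (3.6) read $S^{\infty}_{k+2}=MS^{\infty}_k M^{-1}$ and $S^0_{k+1}=\sigma_1 S^0_k \sigma_1$, which iterate at once to $S^{\infty}_{2k+j}=M^k S^{\infty}_j M^{-k}$ for $j=0,1$ and $k\in\mathbb{Z}$, and $S^0_k=\sigma_1^k S^0_0 \sigma_1^k$.

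For the first identity, the key step is the telescoping identity
$$\prod_{k=0}^{2m-1} S^{\infty}_k=\prod_{j=0}^{m-1}\bigl(M^j S^{\infty}_0 S^{\infty}_1 M^{-j}\bigr)=(S^{\infty}_0 S^{\infty}_1 M)^m M^{-m}.$$
Setting $A:=S^{\infty}_0 S^{\infty}_1 M$ and $B:=S^0_0\sigma_1$, the base relation (3.51) becomes $GA=BG$, which iterates to $GA^m=B^m G$. A parallel telescoping, using $\sigma_1 S^0_k \sigma_1=S^0_{k+1}$, yields $B^m=S^0_0 S^0_1\cdots S^0_{m-1}\sigma_1^m$, and combining these produces the claimed positive equation.

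For the second identity I would argue in exactly the same spirit: an analogous computation gives $\prod_{k=1}^{2m} S^{\infty}_{-k}=(M^{-1} S^{\infty}_1 S^{\infty}_0)^m M^m$, and then iterating a base identity of the form $GM^{-1}S^{\infty}_1 S^{\infty}_0=S^0_{-1}\sigma_1 G$ together with the telescoping $(S^0_{-1}\sigma_1)^m=S^0_{-1} S^0_{-2}\cdots S^0_{-m}\sigma_1^m$ would give the claimed negative equation.

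The hard part will be justifying the negative base identity $GM^{-1}S^{\infty}_1 S^{\infty}_0=S^0_{-1}\sigma_1 G$, which is not a purely formal consequence of (3.51) and (3.6) alone. The plan would be to derive it by repeating the monodromy-manifold derivation of (3.51) with the direction of analytic continuation between $\lambda=\infty$ and $\lambda=0$ reversed---that is, comparing $Y^{\infty}_0$ with $X^0_{-1}$ rather than $X^0_1$ along the ``negatively'' indexed sectors---following the argument of \cite[Section~2]{KV1}. Once both base cases are in hand, the two inductions proceed in parallel and the proof is complete.
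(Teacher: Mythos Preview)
Your algebraic induction for the first identity is correct and complete: the telescoping $\prod_{k=0}^{2m-1}S^{\infty}_k=(S^{\infty}_0S^{\infty}_1M)^mM^{-m}$, the iteration $GA^m=B^mG$ from $GA=BG$, and the rewriting $(S^0_0\sigma_1)^m=S^0_0\cdots S^0_{m-1}\sigma_1^m$ are all sound.

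The paper proceeds differently. Instead of reducing to the single algebraic base case \eqref{3.51}, it invokes the rotation symmetries of the canonical solutions themselves (from \cite[(24)]{KV1}),
\[
Y^{\infty}_k(\mu)=\sigma_3 Y^{\infty}_{k+2}(\mu e^{\pi i})\sigma_3 e^{-\pi(a-i/2)\sigma_3},\qquad
X^0_k(\mu)=\sigma_3 X^0_{k+1}(\mu e^{\pi i})\sigma_1,
\]
iterates them to express $Y^{\infty}_{2m}(\mu)$ and $X^0_m(\mu)$ through $Y^{\infty}_0(\mu e^{-m\pi i})$ and $X^0_0(\mu e^{-m\pi i})$, and then applies the defining relation $Y^{\infty}_0=X^0_0G$ at the rotated argument. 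This yields both identities at once: the second one follows by running the same symmetries in the opposite direction (equivalently, by inverting them). Your observation that $D=\sigma_3A^{-1}\sigma_3$ but $E=-\sigma_3B^{-1}\sigma_3$, so that the negative base relation cannot be extracted from \eqref{3.51} and \eqref{3.6} by pure algebra, is exactly right; that is why the paper keeps the solution symmetries in play rather than abstracting to the conjugation relations alone.

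The trade-off: your route gives a tidy, self-contained algebraic proof of the positive identity from the minimal data \eqref{3.51}+\eqref{3.6}, but then forces you back into the monodromy analysis to manufacture the negative base case $GM^{-1}S^{\infty}_1S^{\infty}_0=S^0_{-1}\sigma_1 G$. The paper's route is slightly less elementary but uniform: one analytic ingredient (the solution symmetries) handles both signs simultaneously, and no separate negative base case is needed. Your plan is correct, just marginally less economical.
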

\begin{proof}
Recall that $Y^{\infty}_k(\mu)=\sigma_3 Y^{\infty}_{k+2}(\mu e^{\pi i})
\sigma_3 e^{-\pi(a-i/2)\sigma_3}$ and $X^0_k(\mu)=\sigma_3 X^0_{k+1}(\mu 
e^{\pi i})\sigma_1$ \cite[(24)]{KV1}.
Then
\begin{align*}
Y_0^{\infty}(\mu)& S_0^{\infty}S_1^{\infty} \cdots S^{\infty}_{2m-2}S^{\infty}
_{2m-1} = Y^{\infty}_{2m}(\mu) =\sigma_3 Y^{\infty}_{2(m-1)}(\mu e^{-\pi i})
\sigma_3 e^{\pi(a-i/2)\sigma_3}
\\
&= \cdots =\sigma_3^m Y^{\infty}_0(\mu e^{-m\pi i}) \sigma_3^m e^{m\pi(a-i/2)
\sigma_3},
\\
Y_0^{0}(\mu)& S_0^{0} \cdots S^{0}_{m-1} = Y^{0}_{m}(\mu) 
=\sigma_3 Y^{0}_{m-1}(\mu e^{-\pi i}) \sigma_1 = \cdots =
\sigma_3^m Y^{0}_{0}(\mu e^{-m\pi i}) \sigma_1^m .
\end{align*}
Using $Y^{\infty}_0(\mu) =Y^{0}_0(\mu)G$ and $Y^{\infty}_0(\mu e^{-m\pi i})
=Y^0_0(\mu e^{-m\pi i})G,$ we have
\begin{align*}
Y^0_0(\mu)& GS^{\infty}_0 S^{\infty}_1 \cdots S^{\infty}_{2m-2}S^{\infty}_{2m-1}
=\sigma_3^m Y^0_0(\mu e^{-m\pi i}) G\sigma_3^m e^{m\pi (a-i/2)\sigma_3}
\\
& =Y^0_0(\mu) S^{0}_0  \cdots S^{0}_{m-1}\sigma_1^m G \sigma_3^m e^{m\pi
(a-i/2)\sigma_3},
\end{align*}
which implies the first relation.
\end{proof}
The formulas above are also written as follows:
\begin{prop}\label{prop3.2}
For $m=1, 2, 3, \ldots,$
\begin{align*}
& {G}{S}^{\infty}_0 {S}^{\infty}_1 \cdots {S}^{\infty}_{2m-2}{S}^{\infty}_{2m-1}
= ({S}^0_0\sigma_1)^m {G} \sigma_3^m e^{m\pi (a-i/2)\sigma_3} ,
\\
& {G}{S}^{\infty}_{-1} {S}^{\infty}_{-2} \cdots 
{S}^{\infty}_{-2m+1} {S}^{\infty}_{-2m}
= (\sigma_1{S}^0_0)^m {G} \sigma_3^m e^{m\pi (i/2-a)\sigma_3} .
\end{align*}
\end{prop}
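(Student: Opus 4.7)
The plan is to deduce Proposition \ref{prop3.2} directly from Proposition \ref{prop3.1} by a short algebraic rearrangement, with no further appeal to the monodromy data of the system.

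First I would establish the two auxiliary identities
\[
S^0_0 S^0_1 \cdots S^0_{m-1}\,\sigma_1^m = (S^0_0\sigma_1)^m,\qquad
S^0_{-1} S^0_{-2}\cdots S^0_{-m}\,\sigma_1^m = (\sigma_1 S^0_0)^m
\]
for every $m\ge 1$. Both follow from the relation $S^0_k=\sigma_1 S^0_{k+1}\sigma_1$ of \eqref{3.6}, which iterated gives $\sigma_1^{k} S^0_0 \sigma_1^{k}=S^0_{\pm k}$ and, in particular, the $2$-periodicity $S^0_{k+2}=S^0_k$. I would prove each identity by induction on $m$: the base case $m=1$ is immediate, and in the inductive step one writes
\[
S^0_0\cdots S^0_{m}\,\sigma_1^{m+1} = \bigl(S^0_0\cdots S^0_{m-1}\sigma_1^{m}\bigr)\cdot \sigma_1^{-m} S^0_{m}\,\sigma_1^{m+1},
\]
and uses $\sigma_1^{-m}=\sigma_1^m$ together with $\sigma_1^m S^0_m\sigma_1^m=S^0_0$ to identify the tail factor with $S^0_0\sigma_1$; hence the product becomes $(S^0_0\sigma_1)^{m+1}$. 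The negative-index identity is handled in exactly the same way, the only change being that the tail factor comes out as $\sigma_1 S^0_0$.

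Second, I would multiply each formula of Proposition \ref{prop3.1} on the right by the inverse of its diagonal factor. Because $\sigma_3^2=I$ and $\sigma_3$ commutes with $e^{\alpha\sigma_3}$, one has
\[
\bigl(\sigma_3^m e^{m\pi(i/2-a)\sigma_3}\bigr)^{-1}=\sigma_3^m e^{m\pi(a-i/2)\sigma_3},\qquad
\bigl(\sigma_3^m e^{m\pi(a-i/2)\sigma_3}\bigr)^{-1}=\sigma_3^m e^{m\pi(i/2-a)\sigma_3}.
\]
Applying the first to the first formula of Proposition \ref{prop3.1} and the second to the second formula, and then substituting the auxiliary identities above into the respective right-hand sides, one obtains precisely the two formulas of Proposition \ref{prop3.2}.

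There is essentially no analytic obstacle here; the whole statement is a purely algebraic reshuffling of Proposition \ref{prop3.1}. The only point needing a bit of attention is keeping track of parities when commuting $\sigma_1^m$ past $S^0_{\pm m}$, but this is absorbed uniformly by the observation $\sigma_1^{m}S^0_0\sigma_1^{m}=S^0_{\pm m}$ together with $\sigma_1^{-m}=\sigma_1^m$.
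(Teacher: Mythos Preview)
Your proposal is correct and follows essentially the same route as the paper: both arguments establish the auxiliary identities $S^0_0\cdots S^0_{m-1}\sigma_1^m=(S^0_0\sigma_1)^m$ and $S^0_{-1}\cdots S^0_{-m}\sigma_1^m=(\sigma_1 S^0_0)^m$ from the conjugation relation $S^0_k=\sigma_1 S^0_{k+1}\sigma_1$ in \eqref{3.6}, and then combine with Proposition~\ref{prop3.1}. The only cosmetic difference is that the paper does this via the telescoping observation $S^0_{j-1}\sigma_1^j=\sigma_1 S^0_{j-2}\sigma_1^{j-1}=\cdots=\sigma_1^{j-1}S^0_0\sigma_1$, whereas you phrase the same computation as an induction on $m$.
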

\begin{proof}
By \eqref{3.6}
$
{S}^0_{j-1}\sigma_1^j =\sigma_1 {S}^0_{j-2} \sigma_1^{j-1}= \cdots 
=\sigma_1^{j-1}{S}^0_0 \sigma_1,
$
and hence
$$
{S}^0_0 \cdots {S}^0_{m-1} \sigma^m_1 {G} =({S}^0_0\sigma_1)^m{G}, \quad
{S}^0_{-1} \cdots {S}^0_{-m} \sigma^m_1 {G} =(\sigma_1{S}^0_0)^m{G}.
$$
Combining these with Proposition \ref{prop3.1}, we have the desired result.
\end{proof}
\section{WKB analysis}\label{sc4}
\subsection{Turning points and Stokes graphs}\label{ssc4.1}
Let us examine the characteristic roots $\pm \mu= \pm \mu(t, \lambda)$
of $\mathcal{B}(t,\lambda)$, the turning points, i.e. the roots of $\mu,$
and the Stokes graph, which are used in calculating monodromy data for
system \eqref{1.3}. The characteristic roots are given by
\begin{align}\label{4.1}
\mu^2=& b_1^2 +b_2^2 + b_3^2
\\
\notag
=& - e^{2i\phi} \lambda^2 + e^{2i\phi} a_{\phi}\lambda^{-2} - 4 e^{2i\phi}
\lambda^{-4} +3i e^{i\phi} (1+2ia)t^{-1}
\end{align}
with
\begin{equation}\label{4.2}
a_{\phi}=a_{\phi}(t) = e^{-2i\phi} \Bigl(\frac{y^t}y + \frac 1{2t} \Bigr)^2
+4y + \frac 1{y^2} -3ie^{-i\phi}(1+2ia)\frac 1{ty}.
\end{equation}
The Stokes graph consists of the Stokes curves and the vertices: each Stokes
curve is defined by $\re \int^{\lambda}_{\lambda_*} \mu(\lambda) d\lambda=0 $
with a turning point $\lambda_*$, and the vertices are turning points or
singular points $\lambda=0, \infty.$ Here $\mu(\lambda)$
is considered on a two-sheeted Riemann surface glued along cuts with ends of  
turning points or singular points.
\par
First suppose that $\phi=0.$ If $a_0=a_{\phi=0} =3\cdot 2^{2/3},$ then
$$
\mu(\infty,\lambda)^2|_{\phi=0} =-\lambda^2 +a_0\lambda^{-2}-4\lambda^{-4}
= -\lambda^{-4} (\lambda^2- 2^{1/3})^2(\lambda^2+ 4^{2/3}).
$$
This means that $\mu(t,\lambda)$ admits six turning points
$\lambda_0,$ $\lambda_1,$ $\lambda'_0,$ $\lambda_1',$ $\lambda_2,$ $\lambda'_2$
such that $\lambda_0$ and $\lambda_1$ coalesce at $2^{1/6},$ $\lambda'_0$
and $\lambda'_1$ at $-2^{1/6}$ as $t\to \infty,$ and that $\lambda_2$ and 
$\lambda'_2$ approach $\pm 2^{2/3}i,$ respectively. 
The Stokes graph with $\phi=0$ is used in \cite[Section 4]{KV1}.  
(Note that a solution $y(x)$ of \eqref{1.2} for $x=t>0$ corresponds to
$u(\tau)$ satisfying \eqref{1.1} for $\tau>0$ if $\epsilon b>0$.)
The limit Stokes graph with $t=\infty$ is as in Figure \ref{stokes} (c) 
and $\mu(\lambda)$ is
defined on the two-sheeted Riemann surface $\mathcal{R}_0$ glued along, say
$[\lambda_2, e^{\pi i/2}0]\cup [\lambda'_2, e^{-\pi i/2}0]$.  
\par
The limit Stokes graph for the isomonodromy system \eqref{1.3} is considered
to reflect the Boutroux equations \eqref{2.1}.
When $\phi$ increases or decreases, the limit turning points for
$\lambda_0$ and $\lambda_1$ move according to the solution 
$A_{\phi}$ of the Boutroux equations \eqref{2.1}. By Proposition \ref{prop8.16},
for $\phi$ close to $0$, the double turning point at $2^{1/6}$ is resolved into
two simple turning points such  
that $\im \lambda_0 > 0 >\im \lambda_1,$ $\re \lambda_0 < 2^{1/6} < \re 
\lambda_1$ if $\phi>0,$ and that $\im \lambda_0 <0 <\im \lambda_1,$
$\re \lambda_0 < 2^{1/6} < \re \lambda_1$ if $\phi<0$. 
As will be shown in Proposition \ref{prop8.15},
for $0<|\phi|<\pi/3$ the coalescence of turning points does not occur,
and then topological properties of the limit Stokes graph remain invariant. 
Every turning point is simple, and 
the two-sheeted Riemann surface $\mathcal{R}_{\phi}$ of $\mu(\lambda)$
is glued along the cuts $[\lambda_0, \lambda_1]$, $[\lambda'_0,\lambda'_1]$
and 
$[\lambda_2,e^{(\pi-\phi)i/2}0]\cup [\lambda'_2,e^{-(\pi+\phi)i/2}0]$.
The Stokes graph lies on the upper sheet of $\mathcal{R}_{\phi}.$ 
For $-\pi/3 <\phi< 0$ and $0<\phi<\pi/3$, the limit Stokes graphs 
are as in Figures \ref{stokes} (b) and (d), in which each limit turning point
with $t=\infty$ is also denoted by $\lambda_{\iota}$ or $\lambda'_{\iota}.$ 
In our calculation, for $0<|\phi|<\pi/3,$ we use the Stokes curve from $0$ to
$\infty$ passing through $\lambda_0$ and $\lambda_1$ appearing  
as a resolution of the double turning point. 
For a technical reason,
the cut $[\lambda_0,\lambda_1]$ on the upper sheet is made in such a way 
that the Stokes curve 
$(\lambda_0,\lambda_1)^{\sim}$ is located along the lower shore 
(respectively, the upper shore) of the cut if $0<\phi<\pi/3$ 
(respectively, $-\pi/3<\phi <0$); and the cut $[0, \lambda_2]$ in such a way
that the cut $[\lambda_0,\lambda_1]$ is located on the right-hand side of 
$[0, \lambda_2]$ (cf. Figures \ref{curve+}, \ref{curve-}, \ref{cycles2}).
\par
Let us set
$$
\mu(t,\lambda)= ie^{i\phi}\lambda^{-2} \sqrt{4-a_{\phi}\lambda^2+\lambda^6
- 3i e^{-i\phi}(1+2ia)\lambda^4 t^{-1} }.
$$
Here the branch of the square root is fixed in such a way that
\begin{align*}
- ie^{-i\phi}\lambda^2 \mu(\infty,\lambda)
=& 2 \sqrt{(1- \lambda_{0,\infty}^{-2}\lambda^2)
(1-\lambda_{1,\infty}^{-2}\lambda^2)(1- \lambda_{2,\infty}^{-2}\lambda^2)}
\\
=& 2\sqrt{1- \lambda_{0,\infty}^{-2}\lambda^2}
\sqrt{1- \lambda_{1,\infty}^{-2}\lambda^2}
\sqrt{1-\lambda_{2,\infty}^{-2}\lambda^2}
\end{align*}
with $\lambda_{j,\infty}=\lambda_j(\infty)$, $\lambda_{0,\infty}^2\lambda
_{1,\infty}^2\lambda_{2,\infty}^2=-4$, and that
$\textstyle\sqrt{1-\lambda_{j,\infty}^{-2}\lambda^2} \to 1$ 
as $\lambda \to 0$ on the upper sheet. Then $\mu(t,\lambda) \to -i e^{i\phi}
\lambda +O(1)$ as $\lambda \to \infty$ and 
$\mu(t,\lambda) \to 2ie^{i\phi}\lambda^{-2} +O(1)$
as $\lambda\to 0$ on the upper sheet.
{\small
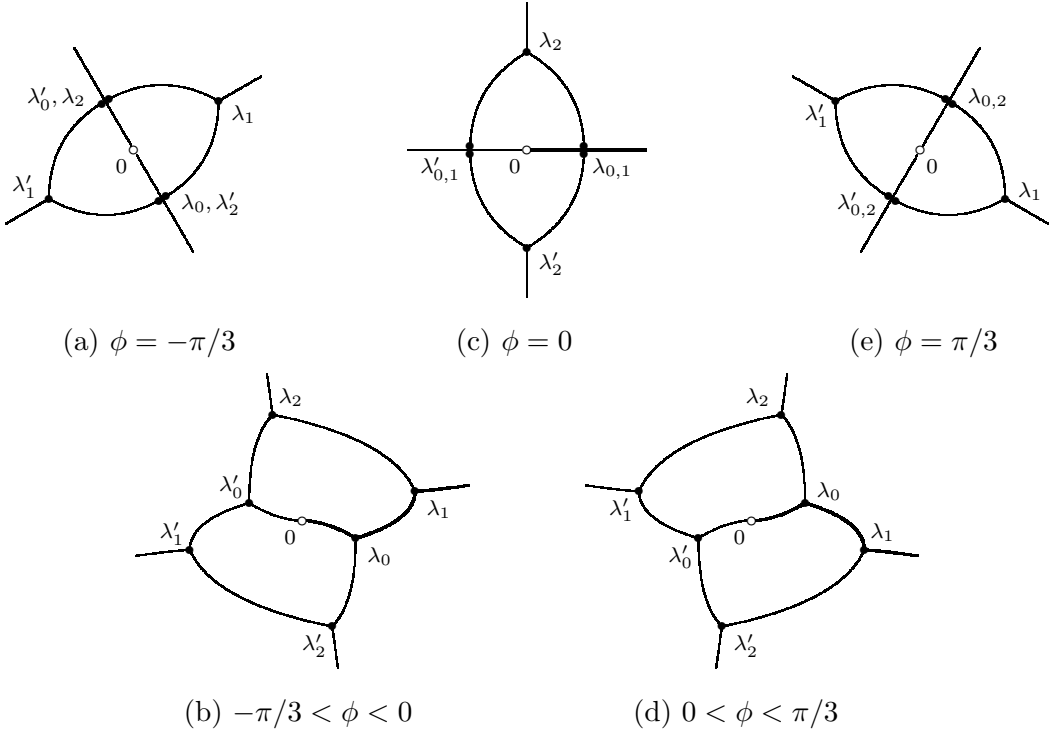
\begin{figure}[htb]
\begin{center}
\unitlength=0.78mm
\begin{picture}(40,60)(-20,-30)
\put(0,0){\circle{1.5}}
\put(-4.2,8.6){\circle*{1.5}}
\put(-5.4,7.9){\circle*{1.5}}
\put(5.4,-7.9){\circle*{1.5}}
\put(4.2,-8.6){\circle*{1.5}}
\put(14.3,8.3){\circle*{1.5}}
\put(-14.3,-8.3){\circle*{1.5}}

{\tiny
\put(-18,8){\makebox{$\lambda'_{0},\lambda_{2}$}}
\put(-20.5,-7){\makebox{$\lambda'_{1}$}}
\put(8,-10){\makebox{$\lambda_{0},\lambda'_{2}$}}
\put(16.5,5){\makebox{$\lambda_1$}}
\put(-3,-4){\makebox{$0$}}
}
\qbezier (-14.3,-8.3) (-14.6,-8.5) (-21.5,-12.5)
\qbezier (21.5,12.5) (14.6,8.5) (14.3, 8.3)
\qbezier (4.8,-8.3) (7,-12) (10, -17.3)

\qbezier (0.5,-0.86) (4,-6.9) (7.5, -12.9)
\qbezier (14.3,8.3) (4.7,13.8)  (-4.8,8.3)
\qbezier (-14.3,-8.3) (-14.3,2.8)  (-4.8,8.3)
\qbezier (14.3,8.3) (14.3,-2.8)  (4.8,-8.3)
\qbezier (-14.3,-8.3) (-4.7,-13.8)  (4.8,-8.3)

\qbezier (-4.8,8.3) (-8,13.8) (-10, 17.3)
\qbezier (-0.4,0.7) (-4,6.9) (-7.5, 12.9)

\put(-12,-34){\makebox{(a) $\phi=-\pi/3$}}
\end{picture}
\qquad\qquad\quad
\begin{picture}(40,60)(-20,-30)
\put(0,0){\circle{1.5}}
\put(9.6,0.7){\circle*{1.5}}
\put(9.6,-0.7){\circle*{1.5}}
\put(-9.6,0.7){\circle*{1.5}}
\put(-9.6,-0.7){\circle*{1.5}}
\put(0,16.6){\circle*{1.5}}
\put(0,-16.6){\circle*{1.5}}

{\tiny
\put(11.0,-4){\makebox{$\lambda_{0,1}$}}
\put(-18,-4){\makebox{$\lambda'_{0,1}$}}
\put(2,17.5){\makebox{$\lambda_2$}}
\put(2,-20.2){\makebox{$\lambda'_2$}}
\put(-3,-4){\makebox{$0$}}
}
\qbezier (0,-16.6) (0,-17) (0,-25)
\qbezier (0,25) (0,17) (0, 16.6)
\qbezier (-9.6,0) (-14,0) (-20, 0)

\qbezier (-1,0) (-8,0) (-15, 0)
\qbezier (0,16.6) (9.6,11.0)  (9.6,0)
\qbezier (0,-16.6) (9.6,-11)  (9.6,0)
\qbezier (0,16.6) (-9.6,11)  (-9.6,0)
\qbezier (0,-16.6) (-9.6,-11)  (-9.6,0)

\thicklines
\qbezier (9.6,0) (16,0) (20, 0)
\qbezier (0.8,0) (8,0) (15, 0)

\put(-12,-34){\makebox{(c) $\phi=0$}}
\end{picture}
\qquad\qquad\quad
\begin{picture}(40,60)(-20,-30)
\put(0,0){\circle{1.5}}
\put(4.2,8.6){\circle*{1.5}}
\put(5.4,7.9){\circle*{1.5}}
\put(-5.4,-7.9){\circle*{1.5}}
\put(-4.2,-8.6){\circle*{1.5}}
\put(-14.3,8.3){\circle*{1.5}}
\put(14.3,-8.3){\circle*{1.5}}

{\tiny
\put(8,8){\makebox{$\lambda_{0,2}$}}
\put(16.5,-8){\makebox{$\lambda_{1}$}}
\put(-14,-10){\makebox{$\lambda'_{0,2}$}}
\put(-19.5,5){\makebox{$\lambda'_1$}}
\put(1,-4){\makebox{$0$}}
}
\qbezier (14.3,-8.3) (14.6,-8.5) (21.5,-12.5)
\qbezier (-21.5,12.5) (-14.6,8.5) (-14.3, 8.3)
\qbezier (-4.8,-8.3) (-7,-12) (-10, -17.3)

\qbezier (-0.5,-0.86) (-4,-6.9) (-7.5, -12.9)
\qbezier (-14.3,8.3) (-4.7,13.8)  (4.8,8.3)
\qbezier (14.3,-8.3) (14.3,2.8)  (4.8,8.3)
\qbezier (-14.3,8.3) (-14.3,-2.8)  (-4.8,-8.3)
\qbezier (14.3,-8.3) (4.7,-13.8)  (-4.8,-8.3)

\qbezier (4.8,8.3) (8,13.8) (10, 17.3)
\qbezier (0.4,0.7) (4,6.9) (7.5, 12.9)

\put(-12,-34){\makebox{(e) $\phi=\pi/3$}}
\end{picture}
\vskip0.2cm

\begin{picture}(70,60)(-35,-30)
\put(0,0){\circle{1.5}}
\put(-9,3){\circle*{1.5}}
\put(9,-3){\circle*{1.5}}
\put(-19,-5){\circle*{1.5}}
\put(19,5){\circle*{1.5}}
\put(-5,18){\circle*{1.5}}
\put(5,-18){\circle*{1.5}}

\qbezier (-9,3) (-5,0.5) (-1, 0)
\qbezier (-19,-5) (-22,-5.1) (-28,-6)
\qbezier (-5,18) (-5.3,20) (-6,25)
\qbezier (5,-18) (5.3,-20) (6,-25)

\qbezier (-5,18) (15,14) (19,5)
\qbezier (5,-18) (-15,-14) (-19,-5)

\qbezier (9,-3) (9,-14) (5,-18)
\qbezier (-9,3) (-9,14) (-5,18)

\qbezier (-9,3) (-19,0) (-19,-5)

\thicklines
\qbezier (9,-3) (5,-0.5) (0.8, 0)
\qbezier (9,-3) (19,0) (19,5)
\qbezier (19,5) (22,5.1) (28,6)

{\tiny
\put(11,-7){\makebox{$\lambda_0$}}
\put(21,1){\makebox{$\lambda_1$}}
\put(-4,20){\makebox{$\lambda_2$}}
\put(-3,-4){\makebox{$0$}}
\put(-14,5){\makebox{$\lambda'_0$}}
\put(-24,-3){\makebox{$\lambda'_1$}}
\put(0,-22){\makebox{$\lambda'_2$}}
}
\put(-20,-34){\makebox{(b) $-\pi/3 < \phi < 0$}}

\end{picture}
\,\,\,
\begin{picture}(70,60)(-35,-30)
\put(0,0){\circle{1.5}}
\put(-9,-3){\circle*{1.5}}
\put(9,3){\circle*{1.5}}
\put(-19,5){\circle*{1.5}}
\put(19,-5){\circle*{1.5}}
\put(-5,-18){\circle*{1.5}}
\put(5,18){\circle*{1.5}}

\qbezier (-9,-3) (-5,-0.5) (-1, 0)
\qbezier (-19,5) (-22,5.1) (-28,6)
\qbezier (-5,-18) (-5.3,-20) (-6,-25)
\qbezier (5,18) (5.3,20) (6,25)

\qbezier (-5,-18) (15,-14) (19,-5)
\qbezier (5,18) (-15,14) (-19,5)

\qbezier (9,3) (9,14) (5,18)
\qbezier (-9,-3) (-9,-14) (-5,-18)

\qbezier (-9,-3) (-19,0) (-19,5)

\thicklines
\qbezier (9,3) (5,0.5) (0.8, 0)
\qbezier (9,3) (19,0) (19,-5)
\qbezier (19,-5) (22,-5.1) (28,-6)

{\tiny
\put(11,4){\makebox{$\lambda_0$}}
\put(20,-3){\makebox{$\lambda_1$}}
\put(-3,-22){\makebox{$\lambda'_2$}}
\put(-3,-4){\makebox{$0$}}
\put(-14,-7){\makebox{$\lambda'_0$}}
\put(-24,1){\makebox{$\lambda'_1$}}
\put(-1,20){\makebox{$\lambda_2$}}
}
\put(-20,-34){\makebox{(d) $0 < \phi < \pi/3$}}
\end{picture}

\end{center}
\caption{Limit Stokes graphs for $|\phi|\le \pi/3$}
\label{stokes}
\end{figure}
}

An unbounded
domain $D \subset \mathcal{R}_{\phi}$ is called a canonical domain if, for each
$\lambda \in D$, there exist contours $C_{\pm}(\lambda) \subset D$ terminating
in $\lambda$ such that
$$
\re \int^{\lambda}_{\lambda_-} \mu(\lambda) d\lambda \to -\infty \quad
\biggl(\text{respectively,} \,\,\,
\re \int^{\lambda}_{\lambda_+} \mu(\lambda) d\lambda \to +\infty \,\, \biggr)
$$
as $\lambda_- \to \infty$ along $C_-(\lambda)$ (respectively, as $\lambda_+
\to \infty$ along $C_+(\lambda)$) (see \cite{F}, \cite[p. 242]{FIKN}).
The interior of a canonical domain contains exactly one Stokes curve, and its
boundary consists of Stokes curves.
\subsection{WKB solution}\label{ssc4.2}
The following WKB solution will be used in our calculus.
\begin{prop}\label{prop4.1}
In the canonical domain whose interior contains a Stokes curve issueing from
the turning point $\lambda_0$ or $\lambda_1$, system \eqref{1.3} with 
$\mathcal{B}(\lambda,t)$ given by \eqref{3.2} admits an asymptotic solution 
expressed as
$$
\Psi_{\mathrm{WKB}}(\lambda) = T(I+O(t^{-\delta})) \exp\Bigl(
\int^{\lambda}_{\tilde{\lambda}_*} \Lambda(\tau) d\tau \Bigr), \quad
T = \begin{pmatrix} 1 & \frac{b_3-\mu}{b_1+ ib_2} \\
  \frac{\mu- b_3}{b_1- ib_2}  &  1    \end{pmatrix}
$$
outside suitable neighbourhoods of zeros of $b_1 \pm i b_2$ as long as
$|\lambda -\lambda_{\iota}| \gg t^{-2/3 +(2/3)\delta}$ $(\iota=0,1,2)$. Here
$\delta$ is an arbitrary number such that $0<\delta <1$, $\tilde{\lambda}_*$ 
is a base point near $\lambda_0$ or $\lambda_1$, and  
$$
\Lambda(\lambda)=\frac t3 \mu(t,\lambda) \sigma_3 -\diag T^{-1}T_{\lambda}.
$$
\end{prop}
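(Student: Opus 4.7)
The plan is to construct the WKB solution by the classical diagonalization-and-Volterra-iteration scheme, making use of the canonical domain structure to guarantee convergence.

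First I would verify by direct computation, using the Pauli-matrix identity $(b_1\sigma_1+b_2\sigma_2+b_3\sigma_3)^2=(b_1^2+b_2^2+b_3^2)I$, that the given matrix $T$ diagonalises $\mathcal{B}$, namely $T^{-1}\mathcal{B}T=\mu\sigma_3$. Setting $\Psi=T\Phi$ transforms \eqref{1.3} into
$$
\frac{d\Phi}{d\lambda}=\Bigl(\tfrac{t}{3}\mu\sigma_3-T^{-1}T_\lambda\Bigr)\Phi.
$$
Splitting $T^{-1}T_\lambda$ into its diagonal part $\diag T^{-1}T_\lambda$ and off-diagonal part $N(\lambda)$, one obtains $\Phi_\lambda=(\Lambda-N)\Phi$ with $\Lambda$ as in the statement.

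Next I would seek $\Phi$ in the form $\Phi=(I+R(\lambda))E(\lambda)$, with $E(\lambda)=\exp\int^\lambda_{\tilde\lambda_*}\Lambda(\tau)d\tau$, so that $R$ must satisfy the linear equation $R_\lambda+R\Lambda-\Lambda R=-N(I+R)$. Writing this entry-wise and integrating the off-diagonal entries along the contours $C_\pm(\lambda)\subset D$ provided by the canonical-domain definition, one converts it into a Volterra integral equation whose kernel involves $\exp(\pm\tfrac{2t}{3}\int\mu\,d\tau)$. On $C_-$ (respectively $C_+$) the real part of the exponent is monotonically decreasing (respectively increasing) toward the base point at infinity, so the kernel is uniformly bounded and the standard Picard iteration converges, giving a solution $R$ with norm controlled by $t^{-1}\|N\|_{L^1}$ integrated along these contours. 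Away from turning points and zeros of $b_1\pm ib_2$, direct inspection of \eqref{3.2} shows that $N=O(1)$, so that $\|R\|=O(t^{-1})$ in the bulk of the canonical domain.

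The main obstacle is the behaviour near the simple turning points $\lambda_\iota$. There $\mu(t,\lambda)\sim c(\lambda-\lambda_\iota)^{1/2}$, whence $N(\lambda)\sim \tfrac{1}{4}(\lambda-\lambda_\iota)^{-1}$; the exponential $\exp(\tfrac{2t}{3}\int\mu)$ behaves like $\exp(ct(\lambda-\lambda_\iota)^{3/2})$. Balancing the singular contribution $\int N\,d\tau$ against the exponential gain over the admissible domain $|\lambda-\lambda_\iota|\gg t^{-2/3+(2/3)\delta}$ yields an estimate of order $t^{-\delta}$, which forces the precise restriction stated in the proposition. Verifying that the contours $C_\pm(\lambda)$ supplied by the canonical domain can be routed to avoid the small discs of radius $t^{-2/3+(2/3)\delta}$ around each turning point and around the zeros of $b_1\pm ib_2$, while preserving the monotonicity of $\re\int\mu\,d\tau$, is where the interplay between the Stokes geometry described in Section~\ref{ssc4.1} and the WKB estimate becomes essential.

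Finally, assembling these pieces, the iteration produces $R(\lambda)=O(t^{-\delta})$ uniformly in the prescribed region, so that $\Psi_{\mathrm{WKB}}=T(I+R)E$ has the form claimed. The justification that this asymptotic solution is indeed an exact solution (not merely a formal one) is built into the Volterra scheme, since $R$ is defined as the fixed point of a contraction. This is the standard framework of \cite{F} and \cite[Chap.~7]{FIKN} adapted to the present system, and I would invoke those references for the purely technical parts of the convergence argument.
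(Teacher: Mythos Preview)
Your proposal is correct and follows the same standard diagonalization/Volterra approach that underlies the references the paper invokes; the paper's own proof is essentially a one-line citation to \cite[Theorem~7.2]{FIKN} and \cite[Proposition~3.8]{Shimom}, so you have spelled out what those citations contain. The one concrete input the paper actually writes down is the asymptotic behaviour $\mu=-ie^{i\phi}\lambda+O(1)$ near $\lambda=\infty$ and $\mu=2ie^{i\phi}\lambda^{-2}+O(1)$ near $\lambda=0$: this is what guarantees that your off-diagonal part $N$ is integrable along the unbounded contours $C_{\pm}(\lambda)$ in the canonical domain (so that $\|N\|_{L^1}$ is finite), a point you pass over by saying $N=O(1)$ in the bulk; you should add this check explicitly.
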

\begin{proof}
This is shown by using $\mu =- i e^{i\phi} \lambda +O(1)$ near $\lambda
=\infty,$ and $= 2i e^{i\phi}\lambda^{-2}+O(1)$ near $\lambda=0$
(cf. \cite[Theorem 7.2]{FIKN}, \cite[Proposition 3.8]{Shimom}).
\end{proof} 
\begin{rem}\label{rem4.1}
In the proposition above
\begin{align*}
\diag T^{-1}T_{\lambda} &= \frac 1{2\mu(\mu+b_3)} (i(b_1b_2' -b_1'b_2)\sigma_3
+ (b_3 \mu' - b_3'\mu) I)
\\
&= \frac 14 \Bigl(1-\frac{b_3}{\mu} \Bigr) \frac{\partial}{\partial\lambda}
\log\frac{b_1+i b_2}{b_1 - ib_2} \sigma_3 +\frac 12 \frac{\partial}{\partial
\lambda} \log \frac{\mu}{\mu+ b_3} I,
\end{align*}
where $b_1'=(\partial/\partial\lambda)b_1$.
\end{rem}
\subsection{Local solution around a turning point}\label{ssc4.3}
Near turning points the WKB solution above fails in expressing asymptotic 
behaviour. In the neighbourhood of $\lambda_{\iota}$ system \eqref{1.3} 
is reduced to
\begin{equation}\label{4.3}
\frac{dW}{d\zeta}=\begin{pmatrix} 0 & 1 \\ \zeta & 0 \end{pmatrix} W,
\end{equation}
which has the solutions ${}^T(\mathrm{Ai}(\zeta), 
\mathrm{Ai}_{\zeta}(\zeta))$, ${}^T(\mathrm{Bi}(\zeta), 
\mathrm{Bi}_{\zeta}(\zeta))$ with the Airy function $\mathrm{Ai}(\zeta)$ and
$\mathrm{Bi}(\zeta) = e^{-\pi i/6}\mathrm{Ai}(e^{-2\pi i/3}\zeta)$
\cite{AS}, \cite{HTF}.
Then we have the following solution near each simple turning point
\cite[Theorem 7.3]{FIKN}, \cite[Proposition 3.9]{Shimom}.
\begin{prop}\label{prop4.2}
For each simple turning point $\lambda_{\iota}$ $(\iota=0,1,2)$ write
$c_k=b_k(\lambda_{\iota}),$ $c'_k=(b_k)_{\lambda}(\lambda_{\iota})$ $(k=1,2,3)$,
and suppose that $c_k,$ $c_k'$ are bounded and $c_1 \pm i c_2\not=0.$ Let
$\hat{t}=2(2\kappa_c)^{-1/3}(c_1-ic_2)(t/3)^{1/3}$ with
$\kappa_c=c_1c_1' +c_2c'_2 +c_3c'_3.$ 
Then system \eqref{1.3} admits a matrix solution of the form 
\begin{equation*}
 \Phi_{\iota}(\lambda) =T_{\iota} (I+O(t^{-\delta'})) \begin{pmatrix}
1 & 0 \\ 0 & \hat{t}^{-1}    \end{pmatrix} W(\zeta) ,
\quad
 T_{\iota}= \begin{pmatrix}   1 &  -\frac{c_3}{c_1+ ic_2}  \\
- \frac{c_3}{c_1 -ic_2}  &  1    \end{pmatrix},
\end{equation*}
in which $\lambda-\lambda_{\iota}=(2\kappa_c)^{-1/3}(t/3)^{-2/3}(\zeta+\zeta_0)$
with $|\zeta_0| \ll t^{-1/3}$,  
as long as $|\zeta| \ll t^{(2/3-\delta')/3},$ that is, $|\lambda-\lambda_{\iota}
| \ll t^{-2/3+(2/3-\delta')/3}.$
Here $\delta'$ is an arbitrary number such that $0<\delta'< 2/3$, and
$W(\zeta)$ solves system \eqref{4.3} having canonical solutions 
$W_{\nu}(\zeta)$ $(\nu \in \mathbb{Z})$ such that
$$
W_{\nu}(\zeta)=\zeta^{-(1/4)\sigma_3}(\sigma_3+\sigma_1)(I+O(\zeta^{-3/2}))
\exp((2/3)\zeta^{3/2}\sigma_3)
$$
as $\zeta \to \infty$ through the sector $|\arg \zeta-(2\nu-1)\pi/3|<2\pi/3,$
and that $W_{\nu+1}(\zeta)=W_{\nu}(\zeta)S_{\nu}$ with
$$
S_1 =\begin{pmatrix} 1 & -i \\ 0 & 1 \end{pmatrix}, \quad 
S_2 =\begin{pmatrix} 1 & 0 \\ -i & 1 \end{pmatrix}, \quad 
S_{\nu+1}=\sigma_1 S_{\nu} \sigma_1.
$$
\end{prop}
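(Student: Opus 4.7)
The plan is to reduce system \eqref{1.3} in a shrinking neighbourhood of the simple turning point $\lambda_\iota$ to the Airy system \eqref{4.3} by a constant gauge transformation followed by a local rescaling of the spectral variable, and then to control the resulting perturbation by the standard successive-approximation scheme used for turning-point problems, exactly as in \cite[Theorem 7.3]{FIKN} and \cite[Proposition 3.9]{Shimom}.

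First I would apply the gauge transformation $\Psi = T_{\iota} \tilde\Psi$ with the constant matrix $T_\iota$ of the proposition. This matrix is well defined because $c_1 \pm i c_2 \neq 0$ by hypothesis, and it is the value at $\lambda_\iota$ of the diagonalising matrix $T$ of Proposition \ref{prop4.1}; it turns the coefficient matrix $\mathcal{B}(\lambda_\iota,t)$ into one whose off-diagonal structure matches the Airy form (the equality of the diagonal entries reflects $\mu(\lambda_\iota)=0$, i.e.\ $c_3^2=c_1^2+c_2^2$). Next I would Taylor-expand $\mathcal{B}(\lambda,t)$ around $\lambda_\iota$. Because $\mu^2=b_1^2+b_2^2+b_3^2$ has a simple zero at $\lambda_\iota$, one has
\begin{equation*}
\mu(\lambda,t)^2 = 2\kappa_c (\lambda-\lambda_\iota)\bigl(1 + O(\lambda-\lambda_\iota)\bigr) + O(t^{-1}),
\end{equation*}
with $\kappa_c=c_1c_1'+c_2c_2'+c_3c_3'$. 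The rescaling $\lambda-\lambda_\iota=(2\kappa_c)^{-1/3}(t/3)^{-2/3}(\zeta+\zeta_0)$ is the unique one that makes $(t/3)\mu\,d\lambda$ agree with $\zeta^{1/2}d\zeta$ to leading order, so that the exponential factor $\exp((2/3)\zeta^{3/2}\sigma_3)$ in the canonical Airy solutions $W_\nu(\zeta)$ correctly reproduces the WKB exponential. The small shift $\zeta_0=O(t^{-1/3})$ absorbs the $t^{-1}$-corrections in $\mathcal{B}$ and the quadratic Taylor remainder of $\mu^2$; the diagonal conjugation by $\mathrm{diag}(1,\hat t^{-1})$ with $\hat t=2(2\kappa_c)^{-1/3}(c_1-ic_2)(t/3)^{1/3}$ normalises the off-diagonal entries to $(1,\zeta)$, yielding exactly \eqref{4.3} at leading order.

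Then I would write the rescaled system as
\begin{equation*}
\frac{dW}{d\zeta} = \left(\begin{pmatrix} 0 & 1 \\ \zeta & 0 \end{pmatrix} + t^{-\delta'} R(\zeta,t)\right) W,
\end{equation*}
where $R(\zeta,t)$ stays bounded on the disc $|\zeta| \ll t^{(2/3-\delta')/3}$ by construction of the scaling. Converting this into a Volterra integral equation with kernel built from the canonical Airy fundamental matrix $W_\nu(\zeta)$, and iterating, produces a solution of the form $(I+O(t^{-\delta'}))W(\zeta)$ uniformly in the allowed $\zeta$-range. The Stokes structure $W_{\nu+1}=W_\nu S_\nu$ with the stated $S_1$, $S_2$ and the rotation rule $S_{\nu+1}=\sigma_1 S_\nu \sigma_1$ follows directly from the classical connection formulas for $\mathrm{Ai}$ and $\mathrm{Bi}(\zeta)=e^{-\pi i/6}\mathrm{Ai}(e^{-2\pi i/3}\zeta)$.

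The main obstacle is verifying that the perturbation $R(\zeta,t)$ is effectively of size $t^{-\delta'}$ uniformly on the whole enlarged disc $|\zeta|\ll t^{(2/3-\delta')/3}$, because higher-order Taylor terms of $\mathcal{B}$ produce factors $(\lambda-\lambda_\iota)^k = O(t^{-2k/3}\zeta^k)$ that grow with $|\zeta|$, and because the Airy propagator itself grows like $\exp((2/3)|\zeta|^{3/2})$. The choice $0<\delta'<2/3$ is dictated precisely by balancing these two growths in the Volterra estimate, so that the iteration converges and the error remains of order $t^{-\delta'}$ up to the edge of the disc. Once this bound is in place the matching of $\Phi_\iota(\lambda)$ with the WKB solution of Proposition \ref{prop4.1} in the overlap region $t^{-2/3+(2/3)\delta}\ll|\lambda-\lambda_\iota|\ll t^{-2/3+(2/3-\delta')/3}$ is automatic.
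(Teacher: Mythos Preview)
Your proposal is correct and follows precisely the standard turning-point reduction that the paper invokes by citing \cite[Theorem~7.3]{FIKN} and \cite[Proposition~3.9]{Shimom}; the paper gives no independent argument beyond these references. One small slip: $T_\iota$ is not literally the value of $T$ from Proposition~\ref{prop4.1} at $\lambda_\iota$ (the $(1,2)$ entries differ in sign), but it is an equivalent constant gauge that brings $\mathcal{B}(\lambda_\iota,t)$ to the required off-diagonal form, so the argument is unaffected.
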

\begin{rem}\label{rem4.2}
Putting $\lambda-\lambda_{\iota}=(2\kappa_c)^{-1/3}(e^{2\pi i/3})^{2j}
(t/3)^{-2/3}(\zeta+\zeta_0),$ $j\in \{0,\pm 1\},$ we have an expression of
$\Phi_{\iota}(\lambda)$ with $\hat{t}=2(2\kappa_c)^{-1/3}(e^{2\pi i/3})^{2j}
(c_1-ic_2)(t/3)^{1/3}.$
\end{rem}
\section{Calculation of the connection matrix}\label{sc5}
We calculate the connection matrix $\hat{G}=(\hat{g}_{ij})$ given by 
\eqref{3.5} as a solution of the direct monodromy problem 
by applying WKB analysis to system \eqref{1.3}. 
Suppose that $a_{\phi}(t)$ is given by \eqref{4.2} with a pair of
arbitrary functions $(y,y^t)=(y(t),y^t(t))$ not necessarily solving 
\eqref{1.2}, and that 
\begin{equation}\label{5.0}
a_{\phi}(t)=A_{\phi}+ \frac{B_{\phi}(t)}{t}, \quad B_{\phi}(t) \ll 1
\end{equation}
for $t \in S_{\phi}(t'_{\infty},\kappa_1,\delta_1)$ with given  
$\kappa_1>0$, small given $\delta_1>0$ and sufficiently large $t'_{\infty}>0$. 
Here $A_{\phi}$ is a solution of the Boutroux equations \eqref{2.1}, and
$$
S_{\phi}(t'_{\infty}, \kappa_1, \delta_1)=\{t\,|\,\, \re t > t'_{\infty},
\,\, |\im t| < \kappa_1, \,\, |y(t)|+ |y^t(t)|+|y(t)|^{-1} 
< \delta_1^{-1} \}.
$$
\par
Let $0<\phi<\pi/3.$ We calculate the analytic continuation of the 
matrix solution near $\lambda=\infty$ along the Stokes curve consisting of 
$$
\mathbf{c}_{\infty}=(\infty, \lambda_1)^{\sim}, \quad
\mathbf{c}_{1}=(\lambda_1,\lambda_0)^{\sim}, \quad
\mathbf{c}_{0}=(\lambda_0,0)^{\sim}
$$
starting from $\infty$ and terminating in $0$ on the upper sheet of the 
Riemann surface $\mathcal{R}_{\phi}$ of $\mu(\infty,\lambda)$ 
as in Figure \ref{curve+}. Under supposition \eqref{5.0} these curves
$\mathbf{c}_0,$ $\mathbf{c}_1$, $\mathbf{c}_{\infty}$ lie within the distance
$O(t^{-1})$ from the limit Stokes graph. 
Recall that the curve $\mathbf{c}_1$ 
is located along the lower shore of the cut 
$[\lambda_0,\lambda_1]$.  
\par
In the WKB solution, write $\Lambda(\lambda)$ in the component-wise form
$ \Lambda(\lambda) =\Lambda_3(\lambda) + \Lambda_I(\lambda)$ with
$$
\Lambda_3(\lambda) =\frac t3 \mu(t,\lambda) \sigma_3 -\diag T^{-1}T_{\lambda}
|_{\sigma_3} \sigma_3,
\quad \Lambda_I(\lambda)= -\diag T^{-1}T_{\lambda}|_{I} I,
$$
in which $\diag T^{-1}T_{\lambda}|_{\sigma_3}\sigma_3  \in \mathbb{C}\sigma_3$,
$\diag T^{-1}T_{\lambda}|_{I}I \in \mathbb{C} I$. 
In Propositions \ref{prop4.1} and
\ref{prop4.2}, if $\delta=\delta'=2/9-\varepsilon$ with any $\varepsilon $
such that $0<\varepsilon<2/9$,
then both propositions are applicable in the annulus
$$
\mathcal{A}^{\iota}_{\varepsilon}: \quad t^{-2/3+(2/3)(2/9-\varepsilon)}
\ll |\lambda -\lambda_{\iota}| \ll t^{-2/3 +(2/3)(2/9+\varepsilon/2)}
$$
$(\iota=0,1)$. In what follows we set $\delta=2/9 -\varepsilon,$ and
write $c_k=b_k(\lambda_0)$, $d_k=b_k(\lambda_1)$ $(k=1,2,3)$.
{\small
\begin{figure}[htb]
\begin{center}
\unitlength=1mm
\begin{picture}(100,50)(0,-25)
\put(-5,1){\makebox{$0$}}
\put(18,15){\makebox{$\lambda_0$}}
\put(55,-8){\makebox{$\lambda_1$}}
\put(88,-15){\makebox{$\infty$}}
\put(12,0){\makebox{$\mathbf{c}_0$}}
\put(38,4){\makebox{$\mathbf{c}_1$}}
\put(71,-9){\makebox{$\mathbf{c}_{\infty}$}}

\thinlines
\qbezier(32,14) (48,10.5) (51,-3)
\qbezier(25.5,15.7) (52,14) (52.7,-9.5)

   \qbezier(58.5,-12.5) (56.5,-19) (50,-16.5)
 \put(50,-16.5){\vector(-3,2){0}}
   \qbezier(46,-13) (43,-8) (48,-4) 
 \put(48,-4){\vector(3,1){0}}

  \qbezier(30.5,12) (28.5,7) (23,9)
\put(23,9){\vector(-3,2){0}}

\qbezier(40,-22) (46,-18) (52,-10)
\qbezier(40,-22.06) (46,-18.06) (52,-10.06)

\thicklines
\put(0,0){\circle{1.5}}
\put(25,15){\circle*{1.5}}
\put(52,-10){\circle*{1.5}}
\qbezier(0.8,0) (16,3) (25,15)
\qbezier(0.8,0.08) (16,3.08) (25,15.08)
\qbezier(25,15) (48,11) (52,-10)
\qbezier(24.92,15) (47.92,11) (51.92,-10)

\qbezier[12](25,15) (24,20) (20,27)
\qbezier(85,-15) (65,-11) (52,-10)

\end{picture}
\end{center}
\caption{Stokes curve for $0<\phi <\pi/3$}
\label{curve+}
\end{figure}
}

{\bf (1)} Let $\Psi_{\infty}(\lambda)$ along $\mathbf{c}_{\infty}
=(\infty, \lambda_1)^{\sim}$ be a WKB solution by Proposition \ref{prop4.1}, and
let ${Y}^{\infty,*}_0(\lambda)=\hat{Y}^{\infty}_0(\lambda)
\Theta_{0,*}^{-\sigma_3}$ be given by \eqref{3.3} and Proposition \ref{prop3.3}.
Set
${Y}^{\infty,*}_0(\lambda)\Theta_{0,*}^{\sigma_3}=\hat{Y}^{\infty}_0(\lambda)
=\Psi_{\infty}(\lambda) \Gamma_{\infty}.$ Note that
$\mu(t,\lambda)= -i e^{i\phi} \lambda -\tfrac 32 (1+2ia)t^{-1}\lambda^{-1}
 +O(\lambda^{-3})$ along $\mathbf{c}_{\infty}$, and
$\mu -b_3 \ll \lambda^{-1}$ as $\lambda \to \infty.$ Then
\begin{align*}
\Gamma_{\infty} =& \Psi_{\infty}(\lambda)^{-1} \hat{Y}^{\infty}_0(\lambda)
 = \Psi_{\infty}(\lambda)^{-1} {Y}^{\infty,*}_0(\lambda) \Theta_{0,*}^{\sigma_3}
\\
=& \exp\Bigl(-\int^{\lambda}_{\tilde{\lambda}_1} \Lambda(\tau) d\tau\Bigr)
T^{-1} (I+O(t^{-\delta}+|\lambda|^{-1})) 
\\
&\phantom{---}\times \exp \Bigl(-\frac 16 (i e^{i\phi}
t\lambda^2 + 3(1+2ia) \log \lambda)\sigma_3 \Bigr),
\\
=& C_3(\tilde{\lambda}_1) c_I(\tilde{\lambda}_1)(I+O(t^{-\delta}))
\\
&\phantom{---}\times \exp\biggl( -\lim_{\substack {\lambda\to\infty \\ 
\lambda\in \mathbf{c}_{\infty}} }  
\Bigl(\int^{\lambda}_{\lambda_1} \Lambda_{3}(\tau)
d\tau + \frac 16 (i e^{i\phi}t\lambda^2 + 3(1+2ia) \log \lambda)\sigma_3 \Bigr)
\biggr),
\end{align*}
in which $C_3(\tilde{\lambda}_1)=\exp (\int^{\tilde{\lambda}_1}_{\lambda_1}
\Lambda_{3}(\tau)d\tau )$, $c_I(\tilde{\lambda}_1) =\exp (-\int^{\infty}
_{\tilde{\lambda}_1} \Lambda_{I}(\tau) d\tau ),$ and $\tilde{\lambda}_1 \in
\mathbf{c}_{\infty},$ $ \tilde{\lambda}_1-\lambda_1 \asymp t^{-1}.$
\par
{\bf (2)} For $\Psi_{\infty}(\lambda)$ and for $\Phi^+_1(\lambda)$ given by
Proposition \ref{prop4.2} in the annulus $\mathcal{A}^1_{\varepsilon}$
around $\lambda_1$, set $\Psi_{\infty}(\lambda)=\Phi^+_1(\lambda)\Gamma_{1+}$
along $\mathbf{c}_{\infty}.$
Suppose that the curve $(2\kappa_d)^{1/3}(\lambda-\tilde{\lambda}_1)=(t/3)^{-2/3}
(\zeta+O(t^{-1/3})),$ $\kappa_d=d_1d_1'+d_2d_2'+d_3d_3'$ with 
$\lambda \in \mathbf{c}_{\infty}$ enters the sector
$|\arg \zeta -7\pi/3 |<2\pi/3$ (the other cases are similarly treated by
Remark \ref{rem4.2}). Write $K^{-1}=2(2\kappa_d)^{-1/3}(d_1-i d_2).$ Then, by
Propositions \ref{prop4.1} and \ref{prop4.2},
\begin{align*}
\Gamma_{1+} =& \Phi^+_1(\lambda)^{-1} \Psi_{\infty}(\lambda)
\\
=& W(\zeta)^{-1} \begin{pmatrix} 1 & 0 \\  0 & (t/3)^{-1/3}K \end{pmatrix}
^{\!\!-1}
(I+O(t^{-\delta}))  \begin{pmatrix}  1 &  -\frac{d_3}{d_1+i d_2} \\
-\frac{d_3}{d_1-i d_2}  & 1  \end{pmatrix}^{\!\!-1}
\\
&\phantom{----} \times \begin{pmatrix}  1  & \frac{b_3 - \mu}{b_1 +i b_2} \\
\frac{\mu -b_3}{b_1- i b_2}  &  1  \end{pmatrix}  (I+O(t^{-\delta}))
\exp \Bigl(\int^{\lambda}_{\tilde{\lambda_1}} \Lambda(\tau)d\tau \Bigr)
\\
=& W(\zeta)^{-1} \begin{pmatrix}  1  &  \frac{d_3}{d_1+i d_2} \\
\frac{(t/3)^{1/3}\mu} {2K(d_1-i d_2)} & \frac{(t/3)^{1/3}\mu}{2K d_3}
\end{pmatrix}  (I+O(t^{-\delta})) \exp\Bigl(\int^{\lambda}_{\tilde{\lambda}_1}
\Lambda(\tau) d\tau \Bigr)
\end{align*}
for $\lambda \in \mathcal{A}^1_{\varepsilon} \cap \mathbf{c}_{\infty},$ where
$(\mu -b_3)/(b_1 \pm ib_2)=(\mu -d_3)/(d_1 \pm i d_2)+O(\eta)$, $\eta=
\lambda-\tilde{\lambda}_1$. Since
$ \mu =(2\kappa_d)^{1/2}\eta^{1/2}(1+O(\eta)) 
 =2K(d_1-i d_2)(t/3)^{-1/3}\zeta^{1/2}(1+O(\eta)),$
we have
\begin{equation*}
\Gamma_{1+} 
= \exp\Bigl(\int^{\lambda}_{\tilde{\lambda}_1} \Lambda(\tau) 
d\tau -\frac 23 \zeta^{3/2}\sigma_3\Bigr)\zeta^{1/4}(I+O(t^{-\delta}))
\begin{pmatrix}  1 & 0 \\ 0 & -\frac{d_1-i d_2}{d_3} \end{pmatrix}.
\end{equation*}
By $\Lambda_{3}(\lambda) =((2\kappa_d)^{1/2}(t/3)\eta^{1/2}(1+O(\eta))
+O(\eta^{-1/2}))\sigma_3$ and $\Lambda_{I}(\lambda)=(-{\eta}^{-1}/4+
O(\eta^{-1/2}) )I$ (cf. Remark \ref{rem4.1}) for $\eta=\lambda
-\tilde{\lambda}_1,$ $\lambda\in \mathcal{A}^1_{\varepsilon} \cap \mathbf{c}
_{\infty},$
$$
\Gamma_{1+} = (\tilde{\zeta}_1)^{1/4}(I+O(t^{-\delta})) C_3(\tilde{\lambda}_1)
^{-1} \begin{pmatrix} 1 & 0 \\ 0 & -\frac{d_1-i d_2}{d_3} \end{pmatrix}
$$
with suitably chosen $\tilde{\zeta}_1 \asymp \tilde{\lambda}_1 -\lambda_1.$
\par
{\bf (3)} Let $\Phi^-_1(\lambda)$ be the solution by Proposition \ref{prop4.2} 
near $\mathbf{c}_1=(\lambda_1,\lambda_0)^{\sim}$, and 
set $\Phi^+_1(\lambda)=\Phi^-_1(\lambda)\Gamma_{1*},$ where 
$\Phi^+_1(\lambda)$ is the analytic continuation
along an arc in $\mathcal{A}^1_{\varepsilon}$ in the clockwise direction.
Then by Proposition \ref{prop4.2},
$$
\Gamma_{1*} = \Phi^-_1(\lambda)^{-1}\Phi^+_1(\lambda)=S_2S_3=\begin{pmatrix}
1 & 0 \\ -i & 1 \end{pmatrix} \begin{pmatrix} 1 & -i \\ 0 & 1 \end{pmatrix}.
$$
\par
{\bf (4)} For $\Phi^-_1(\lambda)$ and the WKB solution $\Psi^-_1(\lambda)$
along $\mathbf{c}_1$, set $\Phi^-_1(\lambda)=\Psi^-_1(\lambda)\Gamma_{1-}.$
Then, supposing the curve $(2\kappa_d)^{1/3}(\lambda-\tilde{\lambda}_1')
=(t/3)^{-2/3}(\zeta+O(t^{-1/3}))$ with $\lambda \in \mathbf{c}_1$ to be in the
sector $|\arg\zeta -\pi|<2\pi/3,$ we have, for $\tilde{\lambda}'_1 \in 
\mathbf{c}_1,$ $|\tilde{\lambda}'_1-\lambda_1| \asymp t^{-1},$
\begin{align*}
\Gamma_{1-} =& \Psi_1^-(\lambda)^{-1} \Phi^-_1(\lambda)
\\
=& \exp \Bigl( -\int^{\lambda}_{\tilde{\lambda}'_1} \Lambda(\tau)d\tau \Bigr)
(I+O(t^{-\delta})) \begin{pmatrix}  1  & \frac{b_3-\mu}{b_1+ ib_2}  \\
\frac{\mu-b_3}{b_1-i b_2} \end{pmatrix}^{\!\! -1}
\\
&\phantom{--}\times  \begin{pmatrix}  1 &  -\frac{d_3}{d_1+ id_2} \\
-\frac{d_3}{d_1- i d_2} &  1 \end{pmatrix} (I+O(t^{-\delta})) 
\begin{pmatrix} 1 & 0 \\ 0 & (t/3)^{-1/3}\tilde{K} \end{pmatrix} W(\zeta)
\\
=& \exp\Bigl(\frac 23\zeta^{3/2} \sigma_3 -\int^{\lambda}_{\tilde{\lambda}'_1}
\Lambda(\tau) d\tau \Bigr) \zeta^{-1/4}(I+O(t^{-\delta})) \begin{pmatrix}
1 & 0 \\ 0 & -\frac{d_3}{d_1-i d_2} \end{pmatrix},
\end{align*}
where $\tilde{K}^{-1}=2(2\kappa_d)^{-1/3} (d_1- i d_2).$
This yields 
$$
\Gamma_{1-}=(\tilde{\zeta}'_1)^{-1/4}(I+O(t^{-\delta})) C'_3(\tilde{\lambda}'_1)
\begin{pmatrix}  
1 &  0  \\  0 & -\frac{d_3}{d_1- i d_2} \end{pmatrix}
$$
with $C'_3(\tilde{\lambda}'_1)=\exp(\int^{\tilde{\lambda}'_1}_{\lambda_1}
\Lambda_3(\tau)d\tau )$ for some $\tilde{\zeta}'_1 \asymp \tilde{\lambda}'_1
-\lambda_1.$
\par
{\bf (5)} For $\Psi^-_1(\lambda)$ and the WKB solution $\Psi^+_0(\lambda)$ along
$\mathbf{c}_1$ near $\lambda_0$, set $\Psi^-_1(\lambda)=\Psi^+_0(\lambda)
\Gamma_{01}.$ Then, for $\tilde{\lambda}_0 \in \mathbf{c}_1,$ $\tilde{\lambda}
_0 -\lambda_0 \asymp t^{-1}$,
\begin{align*}
\Gamma_{01} =& \Psi^+_0(\lambda)^{-1} \Psi^-_1(\lambda)
\\
=& \exp\Bigl(-\int^{\lambda}_{\tilde{\lambda}_0} \Lambda(\tau)d\tau \Bigr)
T^{-1}(I+O(t^{-\delta})) T \exp \Bigl(\int^{\lambda}_{\tilde{\lambda}'_1}
\Lambda(\tau)d\tau\Bigr)
\\
=& C'_3(\tilde{\lambda}'_1)^{-1} C''_3(\tilde{\lambda}_0) c_I(\tilde{\lambda}'
_1, \tilde{\lambda}_0) \exp\Bigl( -\int^{\lambda_1}_{\lambda_0} \Lambda_3(\tau)
d\tau \Bigr),
\end{align*}
where $C''_3(\tilde{\lambda}_0)=\exp (\int^{\tilde{\lambda}_0}_{\lambda_0}
\Lambda_3(\tau)d\tau),$ $c_I(\tilde{\lambda}'_1,\tilde{\lambda}_0)=\exp
(-\int^{\tilde{\lambda}'_1}_{\tilde{\lambda}_0} \Lambda_I(\tau) d\tau).$
\par
{\bf (6)} For $\Psi^+_0(\lambda)$ and for $\Phi^+_0(\lambda)$ given by 
Proposition
\ref{prop4.2} in the annulus $\mathcal{A}^0_{\varepsilon}$ around $\lambda_0$,
set $\Psi^+_0(\lambda)=\Phi^+_0(\lambda)\Gamma_{0+}.$ Then, by the same
argument as in (2) above, we have 
$$
\Gamma_{0+} =\Phi^+_0(\lambda)^{-1}\Psi^+_0(\lambda) =(\tilde{\zeta}_0)^{1/4}
(I+O(t^{-\delta})) C''_3(\tilde{\lambda}_0)^{-1} \begin{pmatrix}
 1 & 0  \\ 0 & -\frac{c_1- i c_2}{c_3} \end{pmatrix}
$$
for some $\tilde{\zeta}_0 \asymp \tilde{\lambda}_0 -\lambda_0.$ 
\par
{\bf (7)} Let $\Phi^-_0(\lambda)$ be the solution by Proposition \ref{prop4.2} 
near $\mathbf{c}_0=(\lambda_0,0)^{\sim}$, and set $\Phi^+_0(\lambda)
 =\Phi^-_0(\lambda)\Gamma_{0\,*},$ where $\Phi^+_0(\lambda)$ is the analytic
continuation along 
an arc in $\mathcal{A}^0_{\varepsilon}$ in the clockwise direction. 
Then by Proposition \ref{prop4.2},
$$
\Gamma_{0\,*}= \Phi^-_0(\lambda)^{-1} \Phi^+_0(\lambda) = S_2= \begin{pmatrix}
1 & 0 \\  -i & 1  \end{pmatrix}.
$$
\par
{\bf (8)} For $\Phi^-_0(\lambda)$ and the WKB solution $\Psi_0(\lambda)$ along
$\mathbf{c}_0,$ set $\Phi^-_0(\lambda)=\Psi_0(\lambda)\Gamma_{0-}.$ By the
same argument as in (4), we have
$$
\Gamma_{0-}=\Psi_0(\lambda)^{-1}\Phi^-_0(\lambda)=(\tilde{\zeta}'_0)^{-1/4}
(I+O(t^{-\delta})) \hat{C}_3(\tilde{\lambda}'_0) \begin{pmatrix}
1 & 0 \\ 0 & -\frac{c_3}{c_1-i c_2} \end{pmatrix}
$$
with $\hat{C}_3(\tilde{\lambda}'_0) =\exp(\int^{\tilde{\lambda}'_0}_{\lambda_0}
\Lambda_3(\tau) d\tau )$ for some $\tilde{\zeta}'_0 \asymp \tilde{\lambda}'_0
-\lambda_0.$
\par
{\bf (9)} For $\Psi_0(\lambda)$ and $\hat{Y}^0_0(\lambda)$ given by \eqref{3.4}, set
$\Psi_0(\lambda)=\hat{Y}^0_0(\lambda)\Gamma_0.$ Then
\begin{align*}
\Gamma_0=& \hat{Y}^0_0(\lambda)^{-1}\Psi_0(\lambda)
\\
=& \exp\Bigl(\frac {2i}3 e^{i\phi} t\lambda^{-1} \sigma_3 \Bigr) 
\frac {\sqrt{2}}i (\sigma_1+\sigma_3)^{-1} (I+O(t^{-\delta}+|\lambda|))
T\exp\Bigl(\int^{\lambda}_{\tilde{\lambda}'_0}\Lambda(\tau) d\tau \Bigr).
\end{align*}
Note that $\mu(t,\lambda)=2ie^{i\phi} \lambda^{-2}+O(1)$ as $\lambda \to 0$
along $\mathbf{c}_0.$ Since
$$ 
(\sigma_1+\sigma_3)^{-1} \lim_{\lambda\to 0}T(\lambda)=\frac 12 (\sigma_1
+\sigma_3) \begin{pmatrix} 1 & -1 \\ 1 & 1 \end{pmatrix}
=\sigma_3,
$$
we have
$$
\Gamma_0 = \hat{C}_3(\tilde{\lambda}'_0)^{-1} \hat{c}_I(\tilde{\lambda}'_0)
(\sigma_3 +O(t^{-\delta})) \exp\biggl(\lim_{\substack{\lambda \to 0 \\
\lambda \in \mathbf{c}_0 }} \Bigl(\int^{\lambda}_{\lambda_0} \Lambda_3(\tau)
d\tau +\frac{2i}3 e^{i\phi}t\lambda^{-1} \sigma_3 \Bigr) \biggr)
$$
with $\hat{c}_I(\tilde{\lambda}'_0) = - \sqrt{2}\, i \exp(\int^0_{\tilde{\lambda}
'_0}\Lambda_I(\tau)d\tau).$
\par
Collecting the matrices above, we have the connection matrix
\begin{align*}
\hat{G}=& G \Theta_{0,*}^{\sigma_3}=
 \hat{Y}^0_0(\lambda)^{-1} {Y}^{\infty,*}_0(\lambda)\Theta_{0,*}^{\sigma_3}=
 \hat{Y}^0_0(\lambda)^{-1} \hat{Y}^{\infty}_0(\lambda)
\\
=& \Gamma_0 \Gamma_{0-}\Gamma_{0\,*} \Gamma_{0+} \Gamma_{01} \Gamma_{1-}
\Gamma_{1*} \Gamma_{1+}\Gamma_{\infty}
\\
=&\epsilon_+ i (\sigma_3+O(t^{-\delta}) ) \exp(J_0\sigma_3)
\begin{pmatrix} 1 & 0 \\ 0 & -c_0^{-1} \end{pmatrix}
\begin{pmatrix} 1 & 0 \\ -i & 1   \end{pmatrix}
\begin{pmatrix} 1 & 0 \\ 0 & -c_0 \end{pmatrix}
\\
&\phantom{--} \times 
 \exp(-J_1\sigma_3)
\begin{pmatrix} 1 & 0 \\ 0 & -d_0^{-1} \end{pmatrix}
\begin{pmatrix} 1 & 0 \\ -i & 1   \end{pmatrix}
\begin{pmatrix} 1 & -i \\ 0 & 1   \end{pmatrix}
\begin{pmatrix} 1 & 0 \\ 0 & -{d_0}  \end{pmatrix}
\exp(-J_{\infty} \sigma_3)
\\
=& \epsilon_+  (I+O(t^{-\delta}))
\\
&\times
\begin{pmatrix}  i\exp(J_0 -J_1-J_{\infty})  & -d_0\exp(J_0-J_1+J_{\infty}) \\
(c^{-1}_0 \exp(-J_1)+d^{-1}_0\exp(J_1))\exp(-J_0-J_{\infty}) &
i c^{-1}_0d_0 \exp(-J_0-J_1+J_{\infty})   \end{pmatrix}
\end{align*}
if $0<\phi<\pi/3$, where $\epsilon_+^2=1,$ 
$ c_0=(c_1-i c_2)/c_3,$ $ d_0=(d_1-i d_2)/d_3$, and
\begin{align}\label{5.1}
& J_0\sigma_3 = \lim_{\substack{\lambda\to 0 \\ \lambda\in \mathbf{c}_0}}
\Bigl(\int^{\lambda}_{\lambda_0} \Lambda_3(\tau) d\tau +\frac{2i}3
e^{i\phi} t\lambda^{-1} \sigma_3\Bigr), \qquad J_1\sigma_3 =\int^{\lambda_1}
_{\lambda_0} \Lambda_3(\tau)d\tau \,\,\,\, \text{(along $\mathbf{c}_1$),}
\\
\label{5.2}
& J_{\infty}\sigma_3 = \lim_{\substack{\lambda\to \infty \\ 
\lambda\in \mathbf{c}_{\infty}}}
\Bigl(\int^{\lambda}_{\lambda_1} \Lambda_{3}(\tau) d\tau +\frac{1}6
(ie^{i\phi}t\lambda^2 +3(1+2ia)\log \lambda)\sigma_3 \Bigr).
\end{align}
\par
In the case $-\pi/3 <\phi <0$, from the analytic continuation along the Stokes
curves as in Figure \ref{curve-}, it follows that
\begin{align*}
\hat{G}=&\epsilon_- i (\sigma_3+O(t^{-\delta}) ) \exp(J_0\sigma_3)
\begin{pmatrix} 1 & 0 \\ 0 & -c^{-1}_0 \end{pmatrix}
\begin{pmatrix} 1 & i \\ 0 & 1   \end{pmatrix}
\begin{pmatrix} 1 & 0 \\ 0 & -c_0 \end{pmatrix}
\\
&\phantom{--} \times 
 \exp(-\hat{J}_1\sigma_3)
\begin{pmatrix} 1 & 0 \\ 0 & -d_0^{-1} \end{pmatrix}
\begin{pmatrix} 1 & i \\ 0 & 1   \end{pmatrix}
\begin{pmatrix} 1 & 0 \\ i & 1   \end{pmatrix}
\begin{pmatrix} 1 & 0 \\ 0 & -d_0  \end{pmatrix}
\exp(-J_{\infty} \sigma_3)
\\
=& \epsilon_-  (I+O(t^{-\delta}))
\\
&\times
\begin{pmatrix} - ic_0d_0^{-1}\exp(J_0 +\hat{J}_1-J_{\infty})  &
(c_0 \exp(\hat{J}_1)+d_0\exp(-\hat{J}_1))\exp(J_0+J_{\infty}) \\
- d_0^{-1}\exp(-J_0+\hat{J}_1-J_{\infty})  &
- i \exp(-J_0+\hat{J}_1+J_{\infty})   \end{pmatrix}.
\end{align*}
Here $\epsilon_-^2=1,$ and
\begin{equation}\label{5.3}
\hat{J}_1\sigma_3 = \int^{\lambda_1}_{\lambda_0} \Lambda_3(\tau) d\tau \,\,\,\,
\text{(along $\hat{\mathbf{c}}_1$),}
\end{equation}
in which $\hat{\mathbf{c}}_1$ is a curve joining $\lambda_0$ to $\lambda_1$
located along the upper shore of the cut on the upper sheet of $\mathcal{R}
_{\phi}$.
{\small
\begin{figure}[htb]
\begin{center}
\unitlength=1mm
\begin{picture}(100,50)(0,-25)
\put(-5,-3){\makebox{$0$}}
\put(18,-17){\makebox{$\lambda_0$}}
\put(55,4){\makebox{$\lambda_1$}}
\put(88,13){\makebox{$\infty$}}
\put(12,-1){\makebox{$\mathbf{c}_0$}}
\put(38,-5){\makebox{$\hat{\mathbf{c}}_1$}}
\put(71,8){\makebox{$\mathbf{c}_{\infty}$}}

\thinlines
\qbezier(32,-14) (48,-10.5) (51,3)
\qbezier(25.5,-15.7) (52,-14) (52.7,9.5)

   \qbezier(58.5,13) (56,18) (50,16)
 \put(50,16){\vector(-3,-2){0}}      

   \qbezier(48,4) (43,7.8) (46,13)
 \put(48,4){\vector(3,-1){0}}

  \qbezier(30.5,-12) (28.5,-7) (23,-9)
\put(23,-9){\vector(-3,-2){0}}

\qbezier(40,22) (46,18) (52,10)
\qbezier(40,22.06) (46,18.06) (52,10.06)

\thicklines
\put(0,0){\circle{1.5}}
\put(25,-15){\circle*{1.5}}
\put(52,10){\circle*{1.5}}
\qbezier(0.8,0) (16,-3) (25,-15)
\qbezier(0.8,0.08) (16,-3.08) (25,-15.08)
\qbezier(25,-15) (48,-11) (52,10)
\qbezier(24.92,-15) (47.92,-11) (51.92,10)

\qbezier[12](25,-15) (24,-20) (20,-27)
\qbezier(85,15) (65,11) (52,10)

\end{picture}
\end{center}
\caption{Stokes curve for $-\pi/3<\phi <0$}
\label{curve-}
\end{figure}
}
Thus we have the following:
\begin{prop}\label{prop5.1}
Let $c_0=(c_1-i c_2)/c_3,$ $d_0=(d_1-i d_2)/d_3$ with $c_k=b_k(\lambda_0),$
$d_k=b_k(\lambda_1)$ for $k=1,2,3.$ If $0<\phi <\pi/3,$ then
\begin{align*}
\hat{G}=& \epsilon_+  (I+O(t^{-\delta}))
\\
&\times
\begin{pmatrix}  i\exp(J_0 -J_1-J_{\infty})  & -d_0\exp(J_0-J_1+J_{\infty}) \\
(c^{-1}_0 \exp(-J_1)+d^{-1}_0\exp(J_1))\exp(-J_0-J_{\infty}) &
i c^{-1}_0d_0 \exp(-J_0-J_1+J_{\infty})   \end{pmatrix},
\end{align*}
and, if $-\pi/3 <\phi <0,$ then
\begin{align*}
\hat{G}=& \epsilon_-  (I+O(t^{-\delta}))
\\
&\times
\begin{pmatrix} - ic_0d_0^{-1}\exp(J_0 +\hat{J}_1-J_{\infty})  &
(c_0 \exp(\hat{J}_1)+d_0\exp(-\hat{J}_1))\exp(J_0+J_{\infty}) \\
- d_0^{-1}\exp(-J_0+\hat{J}_1-J_{\infty})  &
- i \exp(-J_0+\hat{J}_1+J_{\infty})   \end{pmatrix}.
\end{align*}
Here $\epsilon_{\pm}^2=1,$ and $J_0,$ $J_1$, $\hat{J}_1$, $J_{\infty}$ are
integrals given by \eqref{5.1}, \eqref{5.2}, and \eqref{5.3}. 
\end{prop}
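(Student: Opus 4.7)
The plan is to compute $\hat{G}$ by tracking the fundamental matrix of \eqref{1.3} from the canonical solution $\hat{Y}^{\infty}_0(\lambda)$ at $\lambda=\infty$ back to $\hat{Y}^0_0(\lambda)$ at $\lambda=0$ along a Stokes curve passing through the two simple turning points $\lambda_1$ and $\lambda_0$. For $0<\phi<\pi/3$ the curve consists of the three arcs $\mathbf{c}_\infty=(\infty,\lambda_1)^\sim$, $\mathbf{c}_1=(\lambda_1,\lambda_0)^\sim$, $\mathbf{c}_0=(\lambda_0,0)^\sim$ shown in Figure \ref{curve+}; on the interior of each arc I use the WKB solution of Proposition \ref{prop4.1}, and inside the annulus $\mathcal{A}^\iota_\varepsilon$ around each turning point $\lambda_\iota$ I use the Airy parametrix of Proposition \ref{prop4.2}. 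The connection matrix is then assembled as the ordered product
\[
\hat{G} \;=\; \Gamma_0 \,\Gamma_{0-}\, \Gamma_{0*}\, \Gamma_{0+}\, \Gamma_{01}\, \Gamma_{1-}\, \Gamma_{1*}\, \Gamma_{1+}\, \Gamma_{\infty},
\]
where each $\Gamma$ is a transition matrix relating two consecutive model solutions on their common domain of validity.

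The nine transition matrices fall into three categories treated uniformly. The endpoint factors $\Gamma_\infty,\Gamma_0$ compare a WKB solution with a prescribed canonical solution at a singular point; using the local asymptotics $\mu\sim -ie^{i\phi}\lambda$ near $\lambda=\infty$ and $\mu\sim 2ie^{i\phi}\lambda^{-2}$ near $\lambda=0$ one cancels the divergent parts of $\Lambda_3$ against the normalisations \eqref{3.3}--\eqref{3.4}, leaving exactly the regularised integrals $J_\infty,J_0$ defined in \eqref{5.1}--\eqref{5.2}. The four matching factors $\Gamma_{0\pm},\Gamma_{1\pm}$ compare a WKB solution with an Airy parametrix; the local expansion $\mu\sim(2\kappa)^{1/2}(\lambda-\lambda_\iota)^{1/2}$ together with the canonical asymptotics of $W_\nu(\zeta)$ reduces each such factor to $\tilde\zeta^{\pm 1/4}$ times a diagonal matrix of the form $\mathrm{diag}(1,-c_0^{\pm 1})$ or $\mathrm{diag}(1,-d_0^{\pm 1})$, modulo $O(t^{-\delta})$. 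Finally the two rotation factors $\Gamma_{0*}$ and $\Gamma_{1*}$ are pure Airy Stokes products, determined by counting the number of model Stokes rays crossed inside $\mathcal{A}^\iota_\varepsilon$: for $0<\phi<\pi/3$ one ray is crossed at $\lambda_0$ and two at $\lambda_1$, giving $S_2$ and $S_2 S_3$ respectively.

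The main technical obstacle is bookkeeping the non-intrinsic intermediate quantities — the base points $\tilde\lambda_1,\tilde\lambda'_1,\tilde\lambda_0,\tilde\lambda'_0$, the diagonal normalisers $C_3,C'_3,C''_3,\hat C_3$ arising from $\Lambda_3$, the scalar contributions $c_I,\hat c_I$ arising from $\Lambda_I$, and the fractional exponents $\tilde\zeta^{\pm 1/4}$ on either side of each turning point — all of which must telescope in the nine-fold product. The $C_3$-normalisers pair across each turning point because the Airy parametrix is the common intermediate object on both sides; the fractional exponents combine with the product $\mathrm{diag}(1,-c_0^{-1})\cdot\Gamma_{0*}\cdot\mathrm{diag}(1,-c_0)$ (and the analogous combination at $\lambda_1$) so that all reference to $\tilde\lambda$ drops out; and the scalar $c_I,\hat c_I$ reduce to a unimodular constant $\epsilon_\pm$. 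These cancellations are forced by the intrinsic nature of $\hat G$, but verifying them explicitly — together with consistent branch choices of $\mu$ on $\mathcal R_\phi$ — is where most of the effort lies.

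Once the cancellations are in place, only the intrinsic ingredients remain: the three diagonal exponentials $\exp(\pm J_0\sigma_3),\exp(\pm J_1\sigma_3),\exp(\pm J_\infty\sigma_3)$, the diagonal matrices $\mathrm{diag}(1,-c_0^{\pm 1})$ and $\mathrm{diag}(1,-d_0^{\pm 1})$, and the two elementary Stokes jumps. A direct multiplication in the specified order yields the first displayed formula of the proposition. The case $-\pi/3<\phi<0$ is handled by repeating the procedure along the curve of Figure \ref{curve-}, with $\mathbf{c}_1$ replaced by the curve $\hat{\mathbf{c}}_1$ on the upper shore of $[\lambda_0,\lambda_1]$; the reversal of orientation flips the sense in which the Airy Stokes rays are swept out, so each rotation factor is replaced by its inverse and $J_1$ by $\hat J_1$ of \eqref{5.3}. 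Multiplying out delivers the second formula and completes the proof.
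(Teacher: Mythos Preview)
Your proposal is correct and follows essentially the same route as the paper: the paper carries out exactly the nine-step factorisation $\hat G=\Gamma_0\Gamma_{0-}\Gamma_{0*}\Gamma_{0+}\Gamma_{01}\Gamma_{1-}\Gamma_{1*}\Gamma_{1+}\Gamma_\infty$ you describe, with the same WKB/Airy matching at each stage and the same telescoping of the auxiliary $C_3$, $c_I$ and $\tilde\zeta^{\pm1/4}$ terms. One small correction for the case $-\pi/3<\phi<0$: the Airy jump at $\lambda_0$ is not literally $S_2^{-1}$ but the upper-triangular matrix $\begin{pmatrix}1&i\\0&1\end{pmatrix}$ (and at $\lambda_1$ one gets $\begin{pmatrix}1&i\\0&1\end{pmatrix}\begin{pmatrix}1&0\\i&1\end{pmatrix}$), so ``replaced by its inverse'' is accurate at $\lambda_1$ but not at $\lambda_0$; the correct jumps are determined by which Airy sectors the incoming and outgoing arcs occupy, and using $S_2^{-1}$ at $\lambda_0$ would not reproduce the stated formula.
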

From the proposition above with $\hat{G}=G\Theta_{0,*}^{\sigma_3},$ 
$G=(g_{ij})$ (Remark \ref{rem3.2}), we derive key relations.
\begin{cor}\label{cor5.2}
If $0<\phi <\pi/3$ and ${g}_{11}{g}_{12}{g}_{22}\not=0,$ then
$$
{g}_{11}{g}_{22} = -c^{-1}_{0} d_0 (1+O(t^{-\delta})) \exp(-2J_1), \quad
\frac{{g}_{12}}{{g}_{22}} = ic_{0} (1+O(t^{-\delta})) \exp(2J_0). 
$$
If $-\pi/3<\phi<0$ and ${g}_{11}{g}_{21}{g}_{22}\not=0,$ then
$$
{g}_{11}{g}_{22} = -c_{0} d_0^{-1} (1+O(t^{-\delta}))
 \exp(2\hat{J}_1), \quad
\frac{{g}_{21}}{{g}_{11}} = -ic_{0}^{-1} (1+O(t^{-\delta})) \exp(-2J_0). 
$$
\end{cor}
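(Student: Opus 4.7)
The plan is to read off the claimed scalar identities directly from the matrix asymptotics of $\hat G$ supplied by Proposition \ref{prop5.1}, after translating between $\hat G$ and $G$ via Remark \ref{rem3.2}.

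I would begin by invoking Remark \ref{rem3.2}: since $\hat G=G\,\Theta_{0,*}^{\sigma_3}$ with $\Theta_{0,*}^{\sigma_3}$ diagonal, the three combinations $g_{11}g_{22}$, $g_{12}/g_{22}$ and $g_{21}/g_{11}$ are invariant under right multiplication by $\Theta_{0,*}^{\sigma_3}$, and hence coincide with $\hat g_{11}\hat g_{22}$, $\hat g_{12}/\hat g_{22}$ and $\hat g_{21}/\hat g_{11}$ respectively. It therefore suffices to compute those specific combinations from the explicit matrix $M$ on the right of Proposition \ref{prop5.1}.

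For $0<\phi<\pi/3$, a direct computation gives the diagonal product $M_{11}M_{22}=-c_0^{-1}d_0\,e^{-2J_1}$ (the exponents summing to $-2J_1$ and the two factors of $i$ combining to produce the sign) and the ratio $M_{12}/M_{22}=ic_0\,e^{2J_0}$. For $-\pi/3<\phi<0$, the analogous extractions from the second matrix in Proposition \ref{prop5.1} yield $M_{11}M_{22}=-c_0d_0^{-1}e^{2\hat J_1}$ and $M_{21}/M_{11}=-ic_0^{-1}e^{-2J_0}$. Propagating the overall left factor $\epsilon_\pm(I+O(t^{-\delta}))$ through these products and ratios then produces the four claimed asymptotic formulas.

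The delicate step is the last one, error propagation. Writing $\hat G=\epsilon_\pm(I+E)M$ with $\|E\|=O(t^{-\delta})$, the entries of $\hat G$ are $M_{ij}+E_{i1}M_{1j}+E_{i2}M_{2j}$, and in a product such as $\hat g_{11}\hat g_{22}$ the cross contributions produce terms of the form $O(t^{-\delta})\cdot(M_{21}/M_{11})$ and $O(t^{-\delta})\cdot(M_{12}/M_{22})$ relative to the leading $M_{11}M_{22}$. The nonvanishing hypotheses $g_{11}g_{12}g_{22}\ne 0$ (respectively $g_{11}g_{21}g_{22}\ne 0$) are precisely what excludes the degenerate balances in which these off-diagonal ratios would blow up and swamp the leading term; combined with the identity $\det M=1$ (so that $M_{12}M_{21}=M_{11}M_{22}-1$) this guarantees that the relative error remains $O(t^{-\delta})$ and justifies the stated form. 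This is the part I expect to require the most care.
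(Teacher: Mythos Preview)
Your approach is correct and is exactly the paper's: the corollary is stated immediately after Proposition~\ref{prop5.1} with only the one-line remark that it follows from $\hat G=G\Theta_{0,*}^{\sigma_3}$, so your reading-off of the entry combinations $M_{11}M_{22}$, $M_{12}/M_{22}$, $M_{21}/M_{11}$ is precisely what is intended. Your concern about error propagation is easily resolved by inverting the $(I+E)$ factor first and working column-wise, where the $\Theta_{0,*}^{\pm 1}$ factors out and the nonvanishing of the relevant $g_{ij}$ makes each perturbation genuinely relative of size $O(t^{-\delta})$.
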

\section{Asymptotic properties of monodromy data}\label{sc6}
\subsection{Expressions of $J_0,$ $J_1$ and $\hat{J}_1$}\label{ssc6.1}
To examine asymptotic properties of $J_0,$ $J_1$ and $\hat{J}_1$, we make
the change of variables $\lambda^{-2}=z.$ Then, by \eqref{4.1} and \eqref{4.2},
$\mu(t,\lambda)$ becomes
\begin{align*}
\mu(t,\lambda)d\lambda =& \Bigl(-\frac{e^{2i\phi}}z + e^{2i\phi} a_{\phi}z
-4e^{2i\phi}z^2 +3i e^{i\phi} (1+2ia)t^{-1}\Bigr)^{\!1/2} \frac{(-z^{-3/2})}
 2 dz
\\
=& \Bigl(-\frac i2 e^{i\phi} \frac{w(z)}{z^2} -\frac 34 (1+2ia)t^{-1} \frac
1{zw(z)} + O(t^{-2}w(z)^{-3}) \Bigr) dz
\end{align*}
with $w(z)^2 = w(a_{\phi},z)^2 = 4z^3 -a_{\phi} z^2 +1,$ for $z$ such that
$w(z)\gg 1.$ The turning points
$\lambda_0,$ $\lambda_1,$ $\lambda_2$ and $0$ on $\mathcal{R}_{\phi}$ are
mapped to 
$$
z_0=\lambda_0^{-2},\quad z_1=\lambda_1^{-2}, \quad  z_2=\lambda_2^{-2}
$$
and $\infty$, respectively, on the elliptic curve $\Pi_{a_{\phi}}$ for
$w(a_{\phi},z)$ constructed by the same way as in the case of $\Pi_{A_{\phi}}$  
in Section \ref{ssc2.2}. The branch of $\mu(t,\lambda)$ is compatible with
that of $w(a_{\phi},z).$ 
Suppose that $\Pi_{a_{\phi}}$ is equipped with the
cycles $\mathbf{a}$ and $\mathbf{b}$ as in Section \ref{ssc2.2}. Then 
the inverse image of the cycle $\mathbf{a}$ is a
closed curve $\mathbf{a}_{\lambda}$ surrounding the cut $[\lambda_0, \lambda_1]$
anticlockwise (see Figure \ref{cycles2}).
\vskip0.6cm
{\small
\begin{figure}[htb]
\begin{center}
\unitlength=0.60mm
\begin{picture}(100,60)(0,-30)
\put(-12,-5){\makebox{$0$}}
\put(14.5,-16){\makebox{$\lambda_0$}}
\put(51,17){\makebox{$\lambda_1$}}
\put(13,40){\makebox{$\lambda_2$}}
\put(73,15){\makebox{$\infty$}}
\put(7,-8){\makebox{$\mathbf{c}_0$}}
\put(39,-3){\makebox{$\hat{\mathbf{c}}_1$}}
\put(55,-19){\makebox{$\mathbf{a}_{\lambda}$}}
\put(-1,-25){\makebox{$\mathbf{b}_{\lambda}$}}

\qbezier(32.3,-15) (34,-24) (25,-23)
\qbezier(-4,-7.1) (3.5,-18.5) (25,-23)

\qbezier(-4,-7.1) (-9,2) (-4,4.5)
\thicklines
\qbezier[35](32,-13) (25,6) (-3,5)

\thinlines
\qbezier(27,-21.5) (56.5,-25) (58.5,7)
\qbezier(24,-8) (32,6.5) (46.5,13)

\qbezier(24,-8) (19,-20) (27,-21.5)
\qbezier(46.5,13) (57,17) (58.5,7)

\qbezier(32,-14) (48,-10.5) (51,3)
\qbezier(25.5,-15.7) (52,-14) (52.7,9.5)

\qbezier(52.1,-18) (59.6,-9) (60.1,-1)
\put(60.1,-1){\vector(-1,4){0}}

\qbezier(1.5,-16.7) (9.7,-22.4) (18,-24)
\put(18,-24){\vector(1,0){0}}

\qbezier(-2,36) (7,40) (10,40)

\qbezier(-0.2,1.2) (-18.3,25.5) (10,39.3)
\qbezier(-1.0,0.7) (-18.7,25) (6.8,38.5)

\qbezier(25,-15) (22.7,-24) (20,-28)

\thicklines

\qbezier[20](32,25.4) (46,18) (52,10)
\qbezier[20](32,25.4) (21,32) (10,40)

\put(0,0){\circle{1.5}}
\put(25,-15){\circle*{1.5}}
\put(52,10){\circle*{1.5}}
\put(10,40){\circle*{1.5}}
\qbezier(0.8,0) (16,-3) (25,-15)
\qbezier(25,-15) (48,-11) (52,10)

\qbezier[5](10.5,45) (10,42.5) (10,40)

\qbezier(70,13.5) (64,11.1) (52,10)

\end{picture}\quad
\unitlength=0.75mm
\begin{picture}(80,50)(-10,-5)
 \put(-8,22){\makebox{$\infty$}}
 \put(20,12){\makebox{$z_2$}}
 \put(65,32){\makebox{$z_0$}}
 \put(53,2){\makebox{$z_1$}}
  \put(20,0){\makebox{$\Pi_{a_{\phi}}$}}
\thinlines
 \put(25,20.5){\line(-1,0){25}}
 \put(25,19.6){\line(-1,0){25}}
\put(50.5,10){\line(1,2){10}}
\put(49.5,10){\line(1,2){10}}
  \qbezier(66,20) (64,14) (61.5,11.5)
    \qbezier(27,26.3) (30,26.8) (37,25.0)
  \put(66,20){\vector(0,1){0}}
 \put(37,25){\vector(3,-2){0}}
  \put(65,10){\makebox{$\mathbf{a}$}}
  \put(27,29){\makebox{$\mathbf{b}$}}
\put(-20,-5){\makebox{$-\pi/3 <\phi <0$}}
\thicklines
 \put(25,20){\circle*{1}}
\put(50,10){\circle*{1}}
\put(60,30){\circle*{1}}
 \qbezier[20](22,19.5) (23,11.5) (44,7.5)
 \qbezier[7](48.4,7.5) (54.2,8) (52.2,13.1)
  \qbezier(37,22) (42,20.5) (44.5,19)
  \qbezier(47.6,17.5) (48.3,17.8)  (51.9,14.1)
 \qbezier(22,20.4) (23,25.5) (37,22)
\qbezier(46,8) (50,2.5) (57.7,10.5)
\qbezier(64,32) (66,26) (63.2,20)
\qbezier(46,8) (44,14) (46.8,20)
\qbezier (64,32)(60,37.5)(52.3,29.5)
\qbezier (63.2,20)(60.8,14)(57.7,10.5)
\qbezier (46.8,20)(49.2,26)(52.3,29.5)
\end{picture} 
\end{center}
\vskip0.8cm

\begin{center}
\unitlength=0.67mm
\begin{picture}(100,60)(0,-20)
\put(-10,-2){\makebox{$0$}}
\put(16,14){\makebox{$\lambda_0$}}
\put(53,-19){\makebox{$\lambda_1$}}
\put(13,40){\makebox{$\lambda_2$}}
\put(73,-15){\makebox{$\infty$}}
\put(12,0){\makebox{$\mathbf{c}_0$}}
\put(38,4){\makebox{$\mathbf{c}_1$}}
\put(62,3.5){\makebox{$\mathbf{a}_{\lambda}$}}
\put(1.5,-15){\makebox{$\mathbf{b}_{\lambda}$}}
\thicklines
\qbezier[10](32.3,15) (34,24) (25,23)
\qbezier[25](-4,7.1) (3.5,18.5) (25,23)
\thinlines
\qbezier(-4.5,6.5) (-9,-5) (2,-7)
\qbezier(32,13) (25,-6) (2,-7)
\qbezier(27,21.5) (56.5,25) (58.5,-7)
\qbezier(24,8) (32,-6.5) (46.5,-13)
\qbezier(24,8) (19,20) (27,21.5)
\qbezier(46.5,-13) (57,-17) (58.5,-7)

\qbezier(32,14) (48,10.5) (51,-3)
\qbezier(25.5,15.7) (52,14) (52.7,-9.5)

\qbezier(53.5,16) (58.7,10) (59.5,3.5)
\put(53.5,16){\vector(-3,2){0}}

\qbezier(1.5,-9.7) (9.7,-10.4) (18,-6.5)
\put(18,-6.5){\vector(1,1){0}}

\qbezier(40,-22) (46,-18) (52,-10)

\qbezier(-2,36) (7,40) (10,40)

\qbezier(-0.2,1.2) (-18.3,25.5) (10,39.3)
\qbezier(-1.0,0.7) (-18.7,25) (6.8,38.5)

\thicklines
\put(0,0){\circle{1.5}}
\put(25,15){\circle*{1.5}}
\put(52,-10){\circle*{1.5}}
\put(10,40){\circle*{1.5}}
\qbezier(0.8,0) (16,3) (25,15)
\qbezier(25,15) (48,11) (52,-10)

\qbezier[20](25,15) (22,31) (10,40)
\qbezier[5](10.5,45) (10,42.5) (10,40)

\qbezier(70,-13.5) (64,-11.1) (52,-10)

\end{picture}\quad
\unitlength=0.75mm
\begin{picture}(80,50)(-10,-5)
 \put(-8,22){\makebox{$\infty$}}
 \put(20,12){\makebox{$z_2$}}
 \put(65,3){\makebox{$z_0$}}
 \put(43,37){\makebox{$z_1$}}
  \put(20,0){\makebox{$\Pi_{a_{\phi}}$}}
\thinlines
 \put(25,20.5){\line(-1,0){25}}
 \put(25,19.6){\line(-1,0){25}}
\put(50.5,30){\line(1,-2){10}}
\put(49.5,30){\line(1,-2){10}}
  \qbezier(66,20) (64,26) (60.9,29.5)
    \qbezier(27,29.7) (30,32.1) (37,33.5)
  \put(60.7,29.5){\vector(-2,1){0}}
 \put(37,33.5){\vector(1,0){0}}
  \put(68,21){\makebox{$\mathbf{a}$}}
  \put(22,32){\makebox{$\mathbf{b}$}}
\put(-20,-5){\makebox{$0<\phi<\pi/3$}}
\thicklines
 \put(25,20){\circle*{1}}
\put(50,30){\circle*{1}}
\put(60,10){\circle*{1}}
 \qbezier(22,20.2) (23,28.5) (44,32.5)
 \qbezier(48.4,32.5) (54.2,32) (52.2,26.2)
  \qbezier[10](37,18) (48,21.5) (51.2,25.5)
  \qbezier[15](22,19) (23,14.5) (37,18)
\qbezier(46,32) (50,37.5) (57.7,29.5)
\qbezier(64,8) (66,14) (63.2,20)
\qbezier(46,32) (44,26) (46.8,20)
\qbezier (64,8)(60,2.5)(52.3,10.5)
\qbezier (63.2,20)(60.8,26)(57.7,29.5)
\qbezier (46.8,20)(49.2,14)(52.3,10.5)
\end{picture}
\end{center}

\caption{Correspondence of the cycles under the map $z=\lambda^{-2}$}
\label{cycles2}
\end{figure}
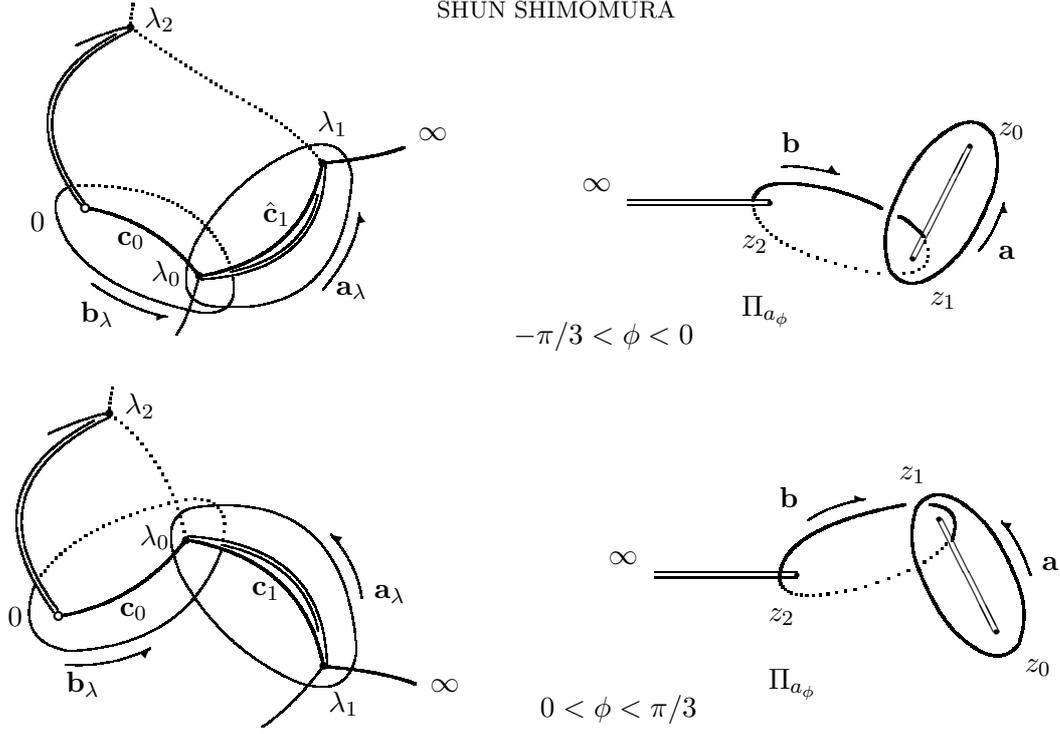
}
Since
$$
\int \frac{w(z)}{z^2} dz= 2\frac{w(z)}z - a_{\phi} \int\frac{dz}{w(z)}
+ 3\int \frac {dz}{z^2w(z)},
$$
we have
\begin{align*}
\mu(t,\lambda) d\lambda =- i e^{i\phi} \frac{w(z)}{z}
 &+ \frac i2 e^{i\phi}
a_{\phi} \frac{dz}{w(z)} -\frac{3i}2 e^{i\phi}\frac{dz}{z^2w(z)} 
\\
 &-\frac 34(1+2ia)t^{-1}\frac{dz}{zw(z)} +O(t^{-2}w(z)^{-3}) dz,
\end{align*}
in which $w(z)/z=2z^{1/2} +O(z^{-1/2})$ as $z \to \infty.$ Hence 
\begin{align}\label{6.1}
\lim_{\substack{\lambda\to 0 \\ \lambda\in\mathbf{c}_0}} & \Bigl(\int^{\lambda}
_{\lambda_0} \mu(t, \tau)d\tau +2i e^{i\phi} \lambda^{-1} \Bigr)
\\
\notag
=& -\frac i4 e^{i\phi}a_{\phi} \int_{\mathbf{b}} \frac{dz}{w(z)}
+\frac{3i}4 e^{i\phi} \int_{\mathbf{b}} \frac{dz}{z^2w(z)}  +\frac 38 (1+2ia)
t^{-1} \int_{\mathbf{b}}\frac {dz}{zw(z)} +O(t^{-2})
\\
\notag
=& \frac i4 e^{i\phi} \int_{\mathbf{b}} \frac{w(z)}{z^2}dz  +\frac 38 (1+2ia)
t^{-1} \int_{\mathbf{b}}\frac {dz}{zw(z)} +O(t^{-2}),
\end{align}
and
\begin{align}\label{6.2}
\int^{\lambda_1}_{\lambda_0\, (\mathbf{c}_1)} 
 &\mu(t, \tau)d\tau , \,\,\,\, 
- \int^{\lambda_1}_{\lambda_0 \, (\hat{\mathbf{c}}_1)} 
\mu(t, \tau)d\tau  
\\
\notag
=& \frac i4 e^{i\phi}a_{\phi} \int_{\mathbf{a}} \frac{dz}{w(z)}
-\frac{3i}4 e^{i\phi} \int_{\mathbf{a}} \frac{dz}{z^2w(z)}  -\frac 38 (1+2ia)
t^{-1} \int_{\mathbf{a}}\frac {dz}{zw(z)} +O(t^{-2})
\\
\notag
=& -\frac i4 e^{i\phi} \int_{\mathbf{a}} \frac{w(z)}{z^2}dz  -\frac 38 (1+2ia)
t^{-1} \int_{\mathbf{a}}\frac {dz}{zw(z)} +O(t^{-2}),
\end{align}
in which $\int^{\lambda_1}_{\lambda_0 \,(\mathbf{c})}$ denotes the integral 
along the contour $\mathbf{c}.$
By Remark \ref{rem4.1},
\begin{align}\label{6.3}
\Lambda_3(t,\lambda)=&\Bigl(\frac t3 \mu(t,\lambda) 
-\diag T^{-1}T_{\lambda}|_{\sigma_3}\Bigr)\sigma_3, 
\\ 
\notag
\diag T^{-1}  & T_{\lambda}|_{\sigma_3} = \frac 14 \Bigl(1-
\frac{b_3}{\mu} \Bigr) \frac{\partial}{\partial \lambda} \log \frac{b_1+ib_2}
{b_1-ib_2}.
\end{align}
To calculate $J_0$, $J_1$ and $\hat{J}_1$, it is necessary to know 
$\diag T^{-1}T_{\lambda}|_{\sigma_3}$ in addition to \eqref{6.1} and 
\eqref{6.2}. Note that, by \eqref{3.2},
$$
b_1= 2i e^{i\phi}\lambda^{-2} -i K_+, \quad b_2=K_-, \quad b_3= -i e^{i\phi}
\lambda -K_0 \lambda^{-1}
$$
with $K_{\pm}=e^{i\phi} y \pm (i/2)y^{-1}\Gamma_0(t, y,y^t),$ 
$K_0=\Gamma_0(t,y,y^t)+3(1/2+ia)t^{-1},$ 
$\Gamma_0(t,y,y^t)=y^ty^{-1} -i e^{i\phi}y^{-1} -(1+3ia)t^{-1}.$
Setting $z_{\pm}=e^{-i\phi}(K_+\pm K_-)/2,$ i.e.
\begin{equation}\label{6.4}
z_+=y, \quad z_-= (i/2)e^{-i\phi}y^{-1}\Gamma_0(t,y,y^t),
\end{equation}
and $\lambda^{-2}=z,$ we have
\begin{equation}\label{6.5}
b_1-i b_2=2i e^{i\phi} (z-z_+),\quad  b_1+ ib_2 = 2i e^{i\phi}(z-z_-).
\end{equation}
By \eqref{4.1}, $\mu^2= -e^{2i\phi} \lambda^2 w(z)^2+O(t^{-1}) ,$ which
implies $\mu= i e^{i\phi}\lambda (w(z)+O(t^{-1}z))$ on the upper sheet of
$\Pi_{a_{\phi}}$, and hence
$$
\frac{b_3}{\mu} = - i e^{-i\phi}\frac {b_3}{\lambda} \Bigl(\frac 1{w(z)} +
O(t^{-1}z^{-2}) \Bigr),
$$
where $b_3/\lambda=-K_0 z-ie^{i\phi}$ satisfies $(b_3/\lambda)(z_{\pm}) =
-(\mu/\lambda)(z_{\pm}) =- i e^{i\phi}w(z_{\pm}) +O(t^{-1}),$ since
$\mu(z_{\pm})^2 =(b_1-ib_2)(b_1+i b_2)(z_{\pm}) +b_3(z_{\pm})^2 =b_3(z_{\pm})^2$
by \eqref{6.5}. These facts combined with \eqref{6.5} yield
\begin{align*}
\diag T^{-1}T_{\lambda}|_{\sigma_3} d\lambda
=& \frac 14 \Bigl(1-\frac{b_3}{\mu}\Bigr) \frac d{d\lambda} \log\frac{b_1+ib_2}
{b_1-i b_2} d\lambda
\\
=& \frac 14 \Bigl(1-\frac{b_3}{\mu}\Bigr) \frac d{dz} \log\frac{b_1+ib_2}
{b_1-i b_2} dz
\\
=& \frac 14\Bigl( 1+ i e^{-i\phi} \frac{b_3}{\lambda} \Bigl(\frac 1{w(z)}
+O(t^{-1}z^{-2}) \Bigr)\Bigr) \Bigl(\frac 1{z-z_-} -\frac 1{z-z_+} \Bigr)dz
\\
=& -\frac 14 \Bigl( \frac 1{z-z_+} -\frac 1{z-z_-} + \Bigl(\frac{w(z_+)}{z-z_+}
-\frac{w(z_-)}{z-z_-}\Bigr) \frac 1{w(z)}  +O(t^{-1}z^{-2})\Bigr) dz,
\end{align*}
which implies
$$
-\lim_{\substack{\lambda\to 0 \\ \lambda \in \mathbf{c}_0}} \int^{\lambda}
_{\lambda_0} \diag T^{-1}T_{\lambda}|_{\sigma_3} d\lambda
= -\frac 14 \log\frac{z_0-z_+}{z_0-z_-} -\frac 18 \int_{\mathbf{b}} \Bigl(
\frac{w(z_+)}{z-z_+} -\frac{w(z_-)}{z-z_-} \Bigr)\frac{dz}{w(z)} +O(t^{-1}).
$$
Here, by \eqref{6.5}, $c_0^2= (c_1-ic_2)^2/c_3^2 =-(c_1-ic_2)/(c_1+ic_2)
=-(z_0-z_+)/(z_0-z_-)$ and $\log((z_0-z_+)/(z_0-z_-))=\log(-c_0^2)=2\log(ic_0).$
Similarly,
\begin{align*}
-\int^{\lambda_1}_{\lambda_0 \,\, (\mathbf{c}_1)} \!\! \diag T^{-1}T_{\lambda}
|_{\sigma_3} d\lambda+\frac 12 \log(c_0d_0^{-1}),& \,\,\,
\int^{\lambda_1}_{\lambda_0 \,\,(\hat{\mathbf{c}}_1)}\!\! \diag T^{-1}T_{\lambda}
|_{\sigma_3} d\lambda -\frac 12 \log(c_0d_0^{-1})
\\
=&
\frac 18 \int_{\mathbf{a}} \Bigl(\frac{w(z_+)}{z-z_+} -\frac{w(z_-)}{z-z_-}
\Bigr)\frac{dz}{w(z)} +O(t^{-1}).
\end{align*}
Insertion of \eqref{6.1}, \eqref{6.2} and the relations above into \eqref{5.1},
\eqref{5.3} with \eqref{6.3} provides the expressions of $J_0$, $J_1$ and
$\hat{J}_1.$ Then by Corollary \ref{cor5.2} we have 
\begin{prop}\label{prop6.1}
Let
$$
W(z)=\Bigl(\frac{w(z_+)}{z-z_+} -\frac{w(z_-)}{z-z_-} \Bigr)\frac 1{w(z)}.
$$
\par
$(1)$ Suppose that $g_{11}g_{22}\not=0,$ $g_{12}/g_{22} \not=0.$ For $0<\phi
<\pi/3,$
\begin{align*}
\log \frac{g_{12}}{g_{22}} &=\frac{ie^{i\phi}t}{6} \int_{\mathbf{b}} \frac{w(z)}
{z^2} dz -\frac 14 \int_{\mathbf{b}} W(z) dz +\frac 14(1+2ia)\int_{\mathbf{b}}
\frac{dz}{zw(z)} +O(t^{-\delta}),
\\
\log ({g_{11}}{g_{22}}) &=\frac{ie^{i\phi}t}{6} \int_{\mathbf{a}} \frac{w(z)}
{z^2} dz -\frac 14 \int_{\mathbf{a}} W(z) dz +\frac 14(1+2ia)\int_{\mathbf{a}}
\frac{dz}{zw(z)}+ \pi i+O(t^{-\delta}).
\end{align*}
\par
$(2)$ Suppose that $g_{11}g_{22}\not=0,$ $g_{21}/g_{11} \not=0.$ For $-\pi/3
<\phi<0,$
\begin{align*}
\log \frac{g_{21}}{g_{11}} &=-\frac{ie^{i\phi}t}{6} \int_{\mathbf{b}} \frac{w(z)}
{z^2} dz +\frac 14 \int_{\mathbf{b}} W(z) dz -\frac 14(1+2ia)\int_{\mathbf{b}}
\frac{dz}{zw(z)} +O(t^{-\delta}),
\\
\log ({g_{11}}{g_{22}}) &=\frac{ie^{i\phi}t}{6} \int_{\mathbf{a}} \frac{w(z)}
{z^2} dz -\frac 14 \int_{\mathbf{a}} W(z) dz +\frac 14(1+2ia)\int_{\mathbf{a}}
\frac{dz}{zw(z)}+ \pi i+O(t^{-\delta}).
\end{align*}
\end{prop}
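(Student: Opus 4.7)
The plan is to assemble the pieces already developed in Section \ref{ssc6.1}: take the logarithm of each relation in Corollary \ref{cor5.2} and substitute the computed asymptotics of $J_0$, $J_1$, $\hat{J}_1$. Concretely, from \eqref{5.1}, \eqref{5.2}, \eqref{5.3} combined with \eqref{6.3}, each of these integrals decomposes as
\begin{equation*}
J_\bullet \;=\; \frac t3\!\int \mu(t,\tau)\,d\tau \;-\; \int \diag T^{-1}T_\tau\big|_{\sigma_3}\,d\tau
\end{equation*}
(with the appropriate regularisation at $\lambda = 0$ or $\lambda=\infty$). For the first piece I will plug in \eqref{6.1} or \eqref{6.2}, whose leading terms produce $\frac{i e^{i\phi}t}{12}\int_{\mathbf{b},\mathbf{a}}\frac{w(z)}{z^2}dz$ together with the subleading $\frac 18 (1+2ia)\int \frac{dz}{zw(z)}$. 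For the second piece I use the formula derived just before the proposition, whose leading contribution is the logarithmic term $-\tfrac12\log(ic_0)$ (for $J_0$) or $-\tfrac12\log(c_0d_0^{-1})$ (for $J_1$ along $\mathbf{c}_1$, with opposite sign for $\hat{J}_1$ along $\hat{\mathbf{c}}_1$), plus the $W(z)$-integral $-\tfrac18\int_{\mathbf{b},\mathbf{a}}W(z)\,dz$.

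Next, I double each formula to form $2J_0$, $-2J_1$, $2\hat{J}_1$ and compare with Corollary \ref{cor5.2}. The prefactors $ic_0$, $-c_0^{-1}d_0$, $-ic_0^{-1}$, $-c_0d_0^{-1}$ in Corollary \ref{cor5.2} are exactly what the boundary logarithms $\log(ic_0)$ and $\log(c_0d_0^{-1})$ produce, up to a factor $-1$. Hence when we compute $\log(g_{12}/g_{22}) = \log(ic_0) + 2J_0 + O(t^{-\delta})$, the $\log(ic_0)$ terms cancel, giving the first equation of part (1). The same cancellation in the relations for $g_{11}g_{22}$ leaves a residual $\log(-1) = \pi i$ coming from the minus sign in $-c_0^{-1}d_0$ (or $-c_0d_0^{-1}$); this is the source of the explicit $+\pi i$ in the second equations of both (1) and (2). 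For part (2), the switch from $\mathbf{c}_1$ to $\hat{\mathbf{c}}_1$ flips the sign of both the $\mu$-integral and the $\diag T^{-1}T_\lambda|_{\sigma_3}$-integral, which is precisely compatible with the opposite sign of $2\hat{J}_1$ versus $-2J_1$ in Corollary \ref{cor5.2}, producing the same $\mathbf{a}$-period expression for $\log(g_{11}g_{22})$ in both cases.

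Finally, I need to justify that all remaining error terms absorb into $O(t^{-\delta})$. The $O(t^{-\delta})$ factors come from Corollary \ref{cor5.2} and dominate the $O(t^{-1})$ errors inside \eqref{6.1}, \eqref{6.2}, and the $\diag T^{-1}T_\lambda|_{\sigma_3}$ integral, since $0<\delta < 1$. The $O(t^{-1})$ discrepancy between $w(a_\phi,z)$ and $w(A_\phi,z)$ stemming from \eqref{5.0}, $a_\phi = A_\phi + B_\phi(t)/t$ with $B_\phi(t)\ll 1$, is likewise absorbed.

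The main obstacle is bookkeeping of the logarithmic branches: one must verify that the identity $\log((z_0-z_+)/(z_0-z_-)) = 2\log(ic_0)$ (and its $d_0$-analogue) holds with the branches dictated by the choice of cuts on $\Pi_{a_\phi}$ and the orientation of the Stokes curves, and that the single residual sign in $-c_0^{-1}d_0$ (respectively $-c_0d_0^{-1}$) is correctly traced back to the diagonal $\sigma_3$-conjugations in the matrix products of Section \ref{sc5}, so as to produce $+\pi i$ rather than $-\pi i$. Once this branch analysis is settled, the proof reduces to a straightforward substitution.
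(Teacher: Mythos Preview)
Your approach is exactly the paper's: assemble $J_0$, $J_1$, $\hat J_1$ from \eqref{6.1}, \eqref{6.2} and the $\diag T^{-1}T_\lambda|_{\sigma_3}$ computation, feed them into Corollary \ref{cor5.2}, and watch the boundary logarithms $\log(ic_0)$, $\log(c_0d_0^{-1})$ cancel, leaving the residual $\pi i$ from $\log(-1)$. Two small points to tidy up: (i) the $W$-contribution to $-\int \diag T^{-1}T_\lambda|_{\sigma_3}$ along $\mathbf{c}_1$ comes with sign $+\tfrac18\int_{\mathbf a}W(z)\,dz$, not $-\tfrac18$ (the orientation of $\mathbf c_1$ under $z=\lambda^{-2}$ reverses relative to $\mathbf c_0$); it is precisely this sign that, after multiplying by $-2$ in $-2J_1$, yields the correct $-\tfrac14\int_{\mathbf a}W(z)\,dz$; (ii) your remark about the $w(a_\phi,z)$ versus $w(A_\phi,z)$ discrepancy is premature---Proposition \ref{prop6.1} is formulated with $w(z)=w(a_\phi,z)$, and the passage to $A_\phi$ is handled separately in Proposition \ref{prop6.8}.
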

\begin{rem}\label{rem6.1}
In the proposition above
$$
\frac{ie^{i\phi}t}6 \int_{\mathbf{a},\,\, \mathbf{b}} \frac{w(z)}{z^2} dz
= -\frac{ie^{i\phi} a_{\phi} t}6 \int_{\mathbf{a}, \,\, \mathbf{b}}
\frac{dz}{w(z)} +\frac{ie^{i\phi}t}2\int_{\mathbf{a},\,\, \mathbf{b}}
\frac{dz}{z^2w(z)}.
$$
\end{rem}
\subsection{Expressions by the $\vartheta$-function}\label{ssc6.2}
For $w(z)^2=w(a_{\phi},z)^2=4z^3-a_{\phi}z^2+1,$ the differential equation
$(dz/du)^2=w(a_{\phi},z)^2$ defines the Weierstrass $\wp$-function
$$
z=\wp(u; g_2, g_3) +\frac{a_{\phi}}{12} , \quad  g_2=\frac{a_{\phi}^2}{12},
\quad g_3= -1+\frac{a_{\phi}^3}{216}.
$$
The periods of $\wp(u; g_2,g_3)$ are
$$
\omega_{\mathbf{a}} =\int_{\mathbf{a}} \frac{dz}{w(a_{\phi},z)}, \quad
\omega_{\mathbf{b}} =\int_{\mathbf{b}} \frac{dz}{w(a_{\phi},z)}, \quad
\tau =\frac{\omega_{\mathbf{b}}}{\omega_{\mathbf{a}}}, \quad
\im \tau >0,
$$
where $\mathbf{a}$ and $\mathbf{b}$ are the cycles on the elliptic curve
$\Pi_{a_{\phi}}=\Pi_+ \cup \Pi_-$ for $w(a_{\phi},z)$ in Section \ref{ssc6.1}
(cf. Figure \ref{cycles2}). The $\vartheta$-function $\vartheta(z,\tau)
=\vartheta(z)$ is defined by
$$
\vartheta(z,\tau) =\sum_{n=-\infty}^{\infty} e^{\pi i\tau n^2 +2\pi izn}
$$
and we set
$$
\nu=\frac{1+\tau}2
$$
(cf. \cite{HC}, \cite{WW}). For $z,$ $\tilde{z} \in \Pi_{a_{\phi}}
=\Pi_+ \cup \Pi_-,$ let
$$
F(\tilde{z}, z)=\frac 1{\omega_{\mathbf{a}}} \int_{\tilde{z}}^z \frac {dz}{w(z)} 
=\frac 1{\omega_{\mathbf{a}}} \int_{\infty}^z \frac {dz}{w(z)} 
-\frac 1{\omega_{\mathbf{a}}} \int^{\tilde{z}}_{\infty} \frac {dz}{w(z)}. 
$$
For any $z_0 \in \Pi_{a_{\phi}}$ denote the projections of $z_0$ 
on the respective sheets
by $z_0^+=(z_0, w(z_0)) =(z_0, w(z_0^+))$ and $z_0^-=(z_0, -w(z_0))
=(z_0, -w(z_0^+)).$  
If $z_0 \in  \Pi_+$ (respectively, $z_0 \in \Pi_-$), then $z_0^{\pm} \in \Pi
_{\pm}$ (respectively, $z_0^{\pm} \in \Pi_{\mp}$). 
\begin{prop}\label{prop6.2}
For any $z_0 \in \Pi_{a_{\phi}}$
\begin{align*}
\frac{dz}{(z-z_0)w(z)}=& \frac 1{w(z^+_0)} d\log \frac{\vartheta(F(z^+_0, z)
+\nu, \tau)}{\vartheta(F(z^-_0,z)+\nu,\tau)} -g_0(z_0) \frac{dz}{w(z)},
\\
g_0(z_0)= & \frac{w'(z^+_0)}{2w(z^+_0)} -\frac 1{\omega_{\mathbf{a}}}
\frac 1{w(z^+_0)} \Bigl(\pi i +\frac{\vartheta'}{\vartheta} (F(z^-_0, z^+_0)
+\nu, \tau) \Bigr).
\end{align*}
\end{prop}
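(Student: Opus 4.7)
The plan is to view both sides as meromorphic differentials on the elliptic curve $\Pi_{a_{\phi}}$ and identify them via their principal parts: on a genus-one curve, two meromorphic differentials with the same poles and residues differ by a holomorphic differential, which must be a constant multiple of the unique (up to scalar) holomorphic differential $dz/w(z)$.

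First I would locate the singularities on each side. The left-hand side $dz/((z-z_{0})w(z))$ has simple poles at the two preimages $z_{0}^{+},z_{0}^{-}$ of $z_{0}$ with residues $\pm 1/w(z_{0}^{+})$, and is holomorphic at $\infty$ (vanishing there, as the substitution $z=t^{-2}$ confirms). For the candidate
$$
\Omega(z):=d\log\frac{\vartheta(F(z_{0}^{+},z)+\nu,\tau)}{\vartheta(F(z_{0}^{-},z)+\nu,\tau)},
$$
the quasi-periodicity $\vartheta(v+\tau,\tau)=e^{-\pi i\tau-2\pi iv}\vartheta(v,\tau)$ implies that under the $\mathbf{b}$-cycle the inner ratio is multiplied by the $z$-independent constant $e^{-2\pi i(F(z_{0}^{+},z)-F(z_{0}^{-},z))}$, so $\Omega$ descends to a well-defined meromorphic differential on $\Pi_{a_{\phi}}$. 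Since $\vartheta(V,\tau)$ vanishes simply at $V\equiv\nu\pmod{\mathbb{Z}+\tau\mathbb{Z}}$, the local expansion $F(z_{0}^{+},z)=(z-z_{0})/(\omega_{\mathbf{a}}w(z_{0}^{+}))+O((z-z_{0})^{2})$ yields simple poles of $\Omega$ at $z_{0}^{\pm}$ with residues $\pm 1$. Therefore $w(z_{0}^{+})^{-1}\Omega$ and the left-hand side share their principal parts, and their difference is a holomorphic differential on $\Pi_{a_{\phi}}$, necessarily of the form $-g_{0}(z_{0})\,dz/w(z)$ for some constant $g_{0}(z_{0})$.

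To pin down $g_{0}(z_{0})$, I would match the constant (order-zero) terms of the Laurent expansions at $z=z_{0}^{+}$, writing $\epsilon:=z-z_{0}$. On the left, $w(z)=w(z_{0}^{+})+w'(z_{0}^{+})\epsilon+O(\epsilon^{2})$ yields the coefficient $-w'(z_{0}^{+})/w(z_{0}^{+})^{2}$ of $dz$ beyond the pole. On the right, expanding $\vartheta(V,\tau)=\vartheta'(\nu,\tau)(V-\nu)+\frac{1}{2}\vartheta''(\nu,\tau)(V-\nu)^{2}+O((V-\nu)^{3})$ near $V=\nu$ produces a contribution proportional to $\vartheta''(\nu,\tau)/(2\vartheta'(\nu,\tau))$ from $d\log\vartheta(F(z_{0}^{+},z)+\nu)$, while the second theta factor (regular at $z_{0}^{+}$) contributes $(\vartheta'/\vartheta)(F(z_{0}^{-},z_{0}^{+})+\nu,\tau)/(\omega_{\mathbf{a}}w(z_{0}^{+}))$. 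Matching the constant terms produces an expression for $g_{0}(z_{0})$ that agrees with the stated formula after replacing $\vartheta''(\nu,\tau)/(2\vartheta'(\nu,\tau))$ by $-\pi i$.

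I expect this last identity to be the main subtle step. It can be derived by differentiating $\vartheta(v+\tau,\tau)=e^{-\pi i\tau-2\pi iv}\vartheta(v,\tau)$ twice in $v$ and evaluating at $v=(1-\tau)/2=\nu-\tau$; combined with $\vartheta(\nu,\tau)=0$, the evenness $\vartheta'(-\nu,\tau)=-\vartheta'(\nu,\tau)$, and $\vartheta(v+1,\tau)=\vartheta(v,\tau)$, one obtains $2\vartheta''(\nu,\tau)=-4\pi i\,\vartheta'(\nu,\tau)$, i.e., $\vartheta''(\nu,\tau)/(2\vartheta'(\nu,\tau))=-\pi i$, which brings the expression for $g_{0}(z_{0})$ into the form asserted in the proposition.
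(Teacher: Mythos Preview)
Your argument is correct and takes a genuinely different route from the paper. The paper proceeds via classical Weierstrass function identities: it passes to the uniformising variable $u$ with $z=\wp(u)+a_{\phi}/12$, applies the standard partial-fraction expansion
\[
\frac{1}{\wp(u)-\wp(u_0)}=\frac{1}{\wp'(u_0)}\bigl(\zeta(u-u_0^+)-\zeta(u-u_0^-)+\zeta(u_0^+-u_0^-)-\tfrac12 w'(z_0^+)\bigr),
\]
and then converts $\sigma$ and $\zeta$ into $\vartheta$ using the formulas
$d\log(\sigma(u-u_0^+)/\sigma(u-u_0^-))=-(2\eta_{\mathbf a}/\omega_{\mathbf a})(u_0^+-u_0^-)\,du+d\log(\vartheta/\vartheta)$ and $\zeta(u_0^+-u_0^-)=(2\eta_{\mathbf a}/\omega_{\mathbf a})(u_0^+-u_0^-)+\pi i/\omega_{\mathbf a}+\omega_{\mathbf a}^{-1}(\vartheta'/\vartheta)$; the two $\eta_{\mathbf a}$-terms cancel and the stated $g_0(z_0)$ drops out directly. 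Your approach instead works intrinsically on the curve: you recognise both sides as meromorphic differentials, match principal parts (so they differ by a multiple of $dz/w$), and identify that multiple by comparing the next Laurent coefficient at $z_0^+$, which forces the theta identity $\vartheta''(\nu)/(2\vartheta'(\nu))=-\pi i$. The paper's route is quicker once one has the $\wp$--$\zeta$--$\sigma$--$\vartheta$ dictionary at hand; yours is more self-contained (no $\zeta$ or $\sigma$ needed) and makes transparent why a single constant $g_0$ suffices, at the price of the Laurent bookkeeping and the separate verification of the quasi-periodicity identity for $\vartheta''(\nu)$.
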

\begin{proof}
For $z_0= \wp(u_0)+ a_{\phi}/12 \in \Pi_{a_{\phi}}$ let $u^{\pm}_0$ be such that
$z^{\pm}_0=\wp(u^{\pm}_0)+ a_{\phi}/12.$ Then
\begin{align*}
\frac{dz}{(z-z_0)w(z)} =&\frac{du}{\wp(u)-\wp(u_0)} 
\\
=& \frac 1{w(z^+_0)} \Bigl(\zeta(u-u^+_0)-\zeta(u-u^-_0) +\zeta(u^+_0 -u^-_0)
-\frac 12 w'(z^+_0) \Bigr) du
\\
=& \frac 1{w(z^+_0)} d\log \frac{\sigma(u-u^+_0)}{\sigma(u-u^-_0)}
+ \frac 1{w(z^+_0)} \Bigl(\zeta(u^+_0-u^-_0) -\frac 12 w'(z^+_0) \Bigr)du.
\end{align*}
From
\begin{align*}
d\log \frac{\sigma(u-u^+_0)}{\sigma(u-u^-_0)} &= -\frac{2\eta_{\mathbf{a}}}
{\omega_{\mathbf{a}}}(u^+_0 -u^-_0) du + d\log \frac{\vartheta(F(z^+_0, z)
+\nu,\tau)}{\vartheta(F(z^-_0,z)+\nu,\tau)},
\\
\zeta(u^+_0 -u^-_0) &= \frac{\sigma'}{\sigma}(u^+_0-u^-_0)=\frac{2\eta_{\mathbf
{a}}}{\omega_{\mathbf{a}}} (u^+_0-u^-_0) +\frac{\pi i}{\omega_{\mathbf{a}}}
+\frac 1{\omega_{\mathbf{a}}} \frac{\vartheta'}{\vartheta}(F(z^-_0, z^+_0)
+\nu,\tau)
\end{align*}
with $F(z^{\pm}_0, z) =\omega_{\mathbf{a}}^{-1}\int^z_{z^{\pm}_0} dz/w(z),$
the desired formula follows.
\end{proof}
Observe that
\begin{align*}
\log \vartheta(F(z^{\pm}_0, z) +\nu,\tau)|_{\mathbf{a}}&=0,
\\
\log \frac{\vartheta(F(z^+_0,z)+\nu,\tau)}{\vartheta(F(z^-_0,z)+\nu,\tau)}
\Bigr|_{\mathbf{b}} &= 
\log \frac{\vartheta(F(z^+_0, z_{\mathbf{b}})+\tau+\nu,\tau)\vartheta(F(z^-_0,
z_{\mathbf{b}})+\nu, \tau)   }
{\vartheta(F(z^-_0, z_{\mathbf{b}})+\tau+\nu,\tau)\vartheta(F(z^+_0,
z_{\mathbf{b}})+\nu, \tau)   }
\\
&= \log \exp(-\pi i (2(F(z^+_0, z_{\mathbf{b}})+\nu) +\tau))
\\
& \phantom{---} + \log
\exp(\pi i (2(F(z^-_0, z_{\mathbf{b}})+\nu) +\tau)) 
\\
&= 2\pi i F(z^-_0, z^+_0)
\end{align*}
for $z_{\mathbf{b}} \in \mathbf{b} \cap (\Pi_+)^{\mathrm{cl}} \cap (\Pi_-)
^{\mathrm{cl}}$, 
since $\vartheta(z\pm \tau,\tau)= e^{-\pi i (\tau\pm 2z)} \vartheta
(z,\tau),$ where $(\Pi_+)^{\mathrm{cl}}$ denotes the closure of $\Pi_+$. 
Then
\begin{align*}
\int_{\mathbf{a}} \frac{dz}{(z-z_0)w(z)} &=-g_0(z_0)\omega_{\mathbf{a}},
\\
\int_{\mathbf{b}} \frac{dz}{(z-z_0)w(z)} &=\frac{2\pi i}{w(z^+_0)}F(z^-_0,
z^+_0) +\tau \int_{\mathbf{a}} \frac{dz}{(z-z_0)w(z)}.
\end{align*}
Differentiation of both sides with respect to $z_0$ at $z_0=0$ yields
$$
\int_{\mathbf{b}} \frac{dz}{z^2 w(z)} = \frac{4\pi i}{\omega_{\mathbf{a}}}
+\tau \int_{\mathbf{a}} \frac {dz}{z^2 w(z)}.
$$
Using these formulas we have
\begin{prop} \label{prop6.3}
For $W(z)$ as in Proposition $\ref{prop6.1}$ and for $z_{\pm}$ by \eqref{6.4},
\begin{align*}
\int_{\mathbf{a}} W(z) dz &= -(w(z_+)g_0(z_+)-w(z_-)g_0(z_-))\omega_{\mathbf{a}}
\\
&= -\frac 12 (w'(z^+_+)-w'(z^+_-)) \omega_{\mathbf{a}}
\\
& \phantom{---} + \frac{\vartheta'}{\vartheta}(F(z^-_+, z^+_+) +\nu,\tau)
- \frac{\vartheta'}{\vartheta}(F(z^-_-, z^+_-) +\nu,\tau) ,
\\
 \Bigl(\int_{\mathbf{b}}-\tau \int_{\mathbf{a}} \Bigr) & W(z) dz =
2\pi i (F(z^-_+, z^+_+)-F(z^-_-, z^+_-)),
\end{align*}
and
\begin{align*}
&\int_{\mathbf{a}} \frac{dz}{zw(z)}=-g_0(0^+)\omega_{\mathbf{a}}, \quad
g_0(0^+)=\frac 1{\omega_{\mathbf{a}}}\Bigl(\pi i + \frac{\vartheta'}{\vartheta}
(F(0^-,0^+)+\nu, \tau) \Bigr),
\\
&\Bigl(\int_{\mathbf{b}} -\tau \int_{\mathbf{a}} \Bigr)\frac{dz}{zw(z)}
=- 2\pi i F(0^-,0^+),
\\
&\Bigl(\int_{\mathbf{b}} -\tau \int_{\mathbf{a}} \Bigr)\frac{dz}{z^2w(z)}
=\frac{4\pi i}{\omega_{\mathbf{a}}} .
\end{align*}
\end{prop}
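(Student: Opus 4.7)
The plan is to obtain every identity in Proposition \ref{prop6.3} by applying, term-by-term, the two integrated consequences of Proposition \ref{prop6.2} that have just been derived, namely
$$
\int_{\mathbf{a}} \frac{dz}{(z-z_0)w(z)} =-g_0(z_0)\omega_{\mathbf{a}}, \qquad
\Bigl(\int_{\mathbf{b}}-\tau\int_{\mathbf{a}}\Bigr)\frac{dz}{(z-z_0)w(z)}
=\frac{2\pi i}{w(z_0^+)}F(z_0^-,z_0^+).
$$
First I decompose $W(z)\,dz= w(z_+)\frac{dz}{(z-z_+)w(z)}-w(z_-)\frac{dz}{(z-z_-)w(z)}$ and insert $z_0=z_\pm$. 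The first line of the formula for $\int_{\mathbf{a}} W(z)dz$ is then immediate, and the second line follows by substituting the explicit expression of $g_0$ from Proposition \ref{prop6.2}: taking $z_\pm$ on $\Pi_+$ so that $w(z_\pm)=w(z_\pm^+)$, the factors $w(z_\pm)/w(z_\pm^+)$ cancel, the two $\pi i/\omega_{\mathbf{a}}$ contributions cancel against each other in the difference, and what remains is precisely $-\tfrac12(w'(z_+^+)-w'(z_-^+))\omega_{\mathbf{a}}$ together with the $\vartheta'/\vartheta$ difference. For $(\int_{\mathbf{b}}-\tau\int_{\mathbf{a}})W(z)dz$ the same decomposition and the second identity above give $2\pi i(F(z_+^-,z_+^+)-F(z_-^-,z_-^+))$ directly, the $w(z_\pm)/w(z_\pm^+)=1$ factors disappearing.

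For the three remaining formulas I specialise to $z_0=0$. From $w(z)^2=4z^3-a_\phi z^2+1$ one has $(w^2)'(0)=0$, and since $w(0)\neq 0$ this forces $w'(0)=0$; the $w'/2w$ term in $g_0(0^+)$ therefore vanishes, yielding the stated expression for $g_0(0^+)$ and thereby $\int_{\mathbf{a}} \frac{dz}{zw(z)}=-g_0(0^+)\omega_{\mathbf{a}}$. The companion formula $(\int_{\mathbf{b}}-\tau\int_{\mathbf{a}})\frac{dz}{zw(z)}=-2\pi i F(0^-,0^+)$ is the specialisation at $z_0=0$ of the second identity above, the sign coming from the branch $w(0^+)=-1$ fixed by the Section \ref{ssc2.2} convention (and consistent with $\mu\sim ie^{i\phi}\lambda w$ on the upper sheet together with $\mu\to -ie^{i\phi}\lambda$ as $\lambda\to\infty$). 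Finally, the identity $(\int_{\mathbf{b}}-\tau\int_{\mathbf{a}})\frac{dz}{z^2w(z)}=4\pi i/\omega_{\mathbf{a}}$ is already displayed in the paragraph preceding the proposition, obtained by differentiating both sides of the second integrated identity in $z_0$ at $z_0=0$ (the $F(0^-,0^+)$ contribution being annihilated by the differentiation together with the branch sign).

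The only real obstacle is the branch bookkeeping. One must fix $z_\pm$ as points of $\Pi_+$ so that $w(z_\pm)=w(z_\pm^+)$ (otherwise an extra sign appears in each term), and one must verify $w(0^+)=-1$ from the normalisation of $\sqrt{4z^3-Az^2+1}$ in Section \ref{ssc2.2} in order to obtain the correct overall signs in the $z_0=0$ specialisations. Once these branch choices are pinned down, every step reduces to linear combinations and direct substitution into the identities that have already been proved.
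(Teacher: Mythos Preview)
Your proposal is correct and follows the same approach as the paper, which simply records the two integrated consequences of Proposition~\ref{prop6.2} together with the $z_0$-derivative at $z_0=0$, then writes ``Using these formulas we have'' and states Proposition~\ref{prop6.3} without further argument. You have in fact supplied more detail than the paper does: the explicit cancellation of the $\pi i/\omega_{\mathbf a}$ terms in the difference, the observation $w'(0^+)=0$ from $(w^2)'(0)=0$ and $w(0)\ne 0$, and the branch check $w(0^+)=-1$ (which is indeed forced by the normalisation $\mu\sim -ie^{i\phi}\lambda$ as $\lambda\to\infty$ combined with $\mu= ie^{i\phi}\lambda\, w(z)$) are all correct and are exactly what is needed to pass from the general identities to the specific ones displayed.
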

\begin{rem}\label{rem6.2}
In the proposition above the first formula is rewritten in the form
$$
\int_{\mathbf{a}} W(z)dz = 2\Bigl(\frac{\vartheta'}{\vartheta} 
( \tfrac 12 F(z^-_+, z^+_+) +\nu, \tau) -\frac{\vartheta'}{\vartheta}
( \tfrac 12 F(z^-_-, z^+_-) +\nu,\tau) \Bigr).
$$
The right-hand side is obtained by comparing the poles of
$(\vartheta'/\vartheta) (\frac 12 F(z^- , z^+) +\nu,\tau)$
with those of $-\frac 12 w'(z^+) +(\vartheta'/\vartheta)(F(z^-,
z^+) +\nu,\tau)$
on $\Pi_{a_{\phi}}$, and showing that the difference is a constant
(see also \cite[pp.~117-119]{Kitaev-3}).
\end{rem}
\subsection{Expression of $B_{\phi}(t)$}\label{ssc6.3}
Let us write the quantity $B_{\phi}(t)$ in terms of
\begin{align*}
\Omega_{\mathbf{a}} &=\int_{\mathbf{a}} \frac{dz}{w(A_{\phi},z)}, \quad
\Omega_{\mathbf{b}} =\int_{\mathbf{b}} \frac{dz}{w(A_{\phi},z)}, 
\\
\mathcal{J}_{\mathbf{a}} &=\int_{\mathbf{a}} \frac{w(A_{\phi},z)}{z^2} dz, \quad
\mathcal{J}_{\mathbf{b}} =\int_{\mathbf{b}} \frac{w(A_{\phi},z)}{z^2} dz
\end{align*}
with $w(A_{\phi},z)=\sqrt{4z^3-A_{\phi}z^2 +1}$ and $\mathbf{a},$ $\mathbf{b}$
on $\Pi_{A_{\phi}}= \Pi_+^* \cup \Pi_-^* =\lim_{a_{\phi}(t)
 \to A_{\phi}} \Pi_{a_{\phi}}$. By \eqref{5.0} the cycles
$\mathbf{a}$ and $\mathbf{b}$ on $\Pi_{a_{\phi}}$
may be regarded as those on $\Pi_{A_{\phi}}$, and are independent of $t$ for 
sufficiently large $t$.
\par
Let $0<\phi<\pi/3.$ 
By Proposition \ref{prop6.3}, the integral $\int_{\mathbf{a}} W(z) dz$   
is expressed in terms of
$\vartheta_*(\pm)=(\vartheta'/\vartheta)(\tfrac 12 F(z^-_{\pm}, z^+_{\pm})+\nu,
\tau)$ (Remark \ref{rem6.2}) or $w'(z_{\pm}^+)$ and 
$(\vartheta'/\vartheta)( F(z^-_{\pm}, z^+_{\pm})+\nu,\tau)$, 
in which
$$
F(z^-_{\pm}, z^+_{\pm})=\frac 1{\omega_{\mathbf{a}}} \int^{z^+_{\pm}}
_{z^-_{\pm}} \frac{dz}{w(a_{\phi},z)}
=\frac 2{\omega_{\mathbf{a}}} \int^{z^+_{\pm}}
_{\infty} \frac{dz}{w(a_{\phi},z)}.
$$
Note that $\int_{\mathbf{a}} W(z)dz$ has no poles or zeros in  
$S_{\phi}(t'_{\infty},\kappa_1,\delta_1).$ 
Indeed, if, say $\vartheta_{*}(+)$ or 
$\vartheta_{*}(-)=\infty$ at $t=t_*$, then $z_{+}$ or $z_-=\infty$, 
and hence $t_*$ is a pole or a zero of 
$y(t)$, or a pole of $y^t(t),$ which is excluded from 
$S_{\phi}(t'_{\infty}, \kappa_1,\delta_1).$  
Consider $z_{\pm}=z_{\pm}(t)$ (cf. \eqref{6.4}) moving on the
elliptic curve $\Pi_{a_{\phi}}$ crossing $\mathbf{a}$ and $\mathbf{b}$-cycles,
and then $F(z^-_{\pm}, z^+_{\pm})=2 p_{\pm}(t) + 2q_{\pm}(t)\tau +O(1)$
with $p_{\pm}(t),$ $q_{\pm}(t) \in \mathbb{Z}$. This implies the boundedness of
$\re (\vartheta'/\vartheta)(\tfrac 12 F(z^-_{\pm}, z^+_{\pm})+\nu,\tau)$ 
or $\re (\vartheta'/\vartheta)( F(z^-_{\pm}, z^+_{\pm})+\nu,\tau)$ in
$S_{\phi}(t'_{\infty},\kappa_1,\delta_1)$, 
and hence the modulus of $\re \int_{\mathbf{a}}W(z) dz$  
is uniformly bounded in $S_{\phi}(t'_{\infty},\kappa_1,\delta_1)$.   
Note that, by \eqref{5.0}, 
\begin{align*}
\frac{1}{z^2} (w(a_{\phi},z)-w(A_{\phi},z)) &= 
\frac 1{z^2} (\sqrt{4z^3-a_{\phi}z^2+ 1} - \sqrt{4z^3-A_{\phi}z^2+1})
\\
&= -\frac{t^{-1} B_{\phi}(t)} {2w(A_{\phi}, z)} (1+O(t^{-1} B_{\phi}(t) )).
\end{align*}
By using this and Proposition \ref{prop6.3}, the second formula in Proposition 
\ref{prop6.1} (1) is written in the form 
\begin{align*}
\log(g_{11} g_{22}) & = \frac{i e^{i\phi}t}{6} \int_{\mathbf{a}} \Bigl(
\frac{w(A_{\phi},z)}{z^2} -\frac{t^{-1}B_{\phi}(t)}{2w(A_{\phi},z)} \Bigr) dz
\\
& \phantom{---} -\frac 14 \int_{\mathbf{a}} W(z) dz -\frac 14 (1+2ia)g_0(0^+)
\omega_{\mathbf{a}} +\pi i +O(t^{-\delta}),
\end{align*}
which implies
\begin{align*}
i e^{i\phi}\Bigl(  t\mathcal{J}_{\mathbf{a}} &-\frac{\Omega_{\mathbf{a}}} 2
B_{\phi}(t) \Bigr)
\\
 =&\frac 32 \int_{\mathbf{a}} W(z) dz
+\frac 32(1+2ia) g_0(0^+) \omega_{\mathbf{a}} +6\log(g_{11}g_{22}) -6\pi i
+O(t^{-\delta}).
\end{align*}
Recall that $G=\hat{G}\Theta^{-\sigma_3}_{0,*} =(g_{ij})$, $g_{ij}=g_{ij}(t)$
is a solution of the direct monodromy problem. Suppose that
\begin{equation}\label{6.6}
|\log(g_{11}g_{22})| \ll 1, \quad |\log(g_{12}/g_{22})| \ll 1 
\quad\text{in $S_{\phi}(t'_{\infty},\kappa_1, \delta_1).$}
\end{equation}
By the Boutroux equations \eqref{2.1}, 
$\im e^{i\phi}\Omega_{\mathbf{a}}B_{\phi}(t)$ is 
bounded as $e^{i\phi}t
\to \infty$ through $S_{\phi}(t'_{\infty},\kappa_1,\delta_1)$. 
By using the first formula of Proposition \ref{prop6.1} (1), we have
\begin{align*}
i e^{i\phi}\Bigl(  t\mathcal{J}_{\mathbf{b}} &-\frac{\Omega_{\mathbf{b}}} 2
B_{\phi}(t) \Bigr)
\\
 =&\frac 32 \int_{\mathbf{b}} W(z) dz
+\frac 32(1+2ia)(2\pi iF(0^-,0^+)+ g_0(0^+) \omega_{\mathbf{b}}) 
+6\log\frac{g_{12}}{g_{22}}+O(t^{-\delta}),
\end{align*}
in which $\int_{\mathbf{b}} W(z)dz$ admits a similar expression in terms of the
$\vartheta$-function with $\hat{\tau}=(-\omega_{\mathbf{a}})/
\omega_{\mathbf{b}}.$ This implies the boundedness of 
$\im e^{i\phi}\Omega_{\mathbf{b}}B_{\phi}(t).$ 
Then we have $|B_{\phi}(t)|\le C_0$ for some $C_0>0$ in 
$S_{\phi}(t'_{\infty},\kappa_1,\delta_1).$
The implied constant of $B_{\phi}(t) \ll 1$ in \eqref{5.0}
may be supposed to be greater than $2C_0$, which causes no changes in the 
subsequent equations by choosing $t'_{\infty}$ larger if necessary, 
and hence the boundedness of $B_{\phi}(t)$ has been shown independently of
\eqref{5.0} under \eqref{6.6}.
The case $-\pi/3 <\phi <0$ is similarly treated under the supposition
\begin{equation}\label{6.7}
|\log(g_{11}g_{22})| \ll 1, \quad |\log(g_{21}/g_{11})| \ll 1 
\quad\text{in $S_{\phi}(t'_{\infty},\kappa_1, \delta_1).$}
\end{equation}
\begin{rem}\label{rem6.3}
The argument above works also under a weaker 
condition, say $B_{\phi}(t) \ll t^{(1-\delta)/2}$. 
The supposition $B_{\phi}(t)\ll 1$ in \eqref{5.0} guarantees that each turning
point is located within the distance $O(t^{-1})$ from its limit one, which
enables us to use the limit Stokes graph in the WKB analysis.
\end{rem}
\begin{prop}\label{prop6.7}
Suppose that $0<\phi<\pi/3$ and \eqref{6.6} $($respectively, $-\pi/3<\phi<0$
and \eqref{6.7}$)$. Then,
in $S_{\phi}(t'_{\infty},\kappa_1,\delta_1),$ $B_{\phi}(t)$ is bounded, and 
\begin{align*}
i e^{i\phi}\Bigl( t\mathcal{J}_{\mathbf{a}} -\frac{\Omega_{\mathbf{a}}} 2
B_{\phi}(t) \Bigr) =&
\frac 32 \int_{\mathbf{a}} W(z) dz +\frac 32(1+2ia) g_0(0^+) \omega_{\mathbf{a}} 
\\
&\phantom{--}
+6\log(g_{11}g_{22})-6\pi i+O(t^{-\delta}),
\\
=& 3\Bigl(\frac{\vartheta'}{\vartheta}(\tfrac 12 F(z^-_+, z_+^+) +\nu , \tau)
- \frac{\vartheta'}{\vartheta}(\tfrac 12 F(z^-_-, z^+_-)+\nu , \tau) \Bigr)
\\
&\phantom{--}
 +\frac 32(1+2ia) g_0(0^+) \omega_{\mathbf{a}} 
+6\log(g_{11}g_{22})-6\pi i+O(t^{-\delta}),
\\
 g_0(0^+)=&\frac 1{\omega_{\mathbf{a}}} \Bigl(\pi i +\frac{\vartheta'}
{\vartheta} (F(0^-,0^+)+\nu, \tau) \Bigr). 
\end{align*}
\end{prop}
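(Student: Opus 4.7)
The plan is to combine Proposition \ref{prop6.1}(1) (the second formula, for the $\mathbf{a}$-cycle) with the expansion of $w(a_{\phi},z)$ around $w(A_{\phi},z)$ supplied by \eqref{5.0}, and then translate everything into the $\vartheta$-function language using Proposition \ref{prop6.3} and Remark \ref{rem6.2}. Concretely, the identity
\[
\frac{1}{z^{2}}\bigl(w(a_{\phi},z)-w(A_{\phi},z)\bigr) = -\frac{t^{-1}B_{\phi}(t)}{2\,w(A_{\phi},z)}\bigl(1+O(t^{-1}B_{\phi}(t))\bigr)
\]
lets me rewrite $\int_{\mathbf{a}} w(a_{\phi},z)/z^{2}\,dz = \mathcal{J}_{\mathbf{a}} -\frac{B_{\phi}(t)}{2t}\Omega_{\mathbf{a}} + O(t^{-2}|B_{\phi}(t)|^{2})$, so that Proposition \ref{prop6.1}(1) immediately yields the claimed relation for $ie^{i\phi}\bigl(t\mathcal{J}_{\mathbf{a}} -\tfrac{1}{2}\Omega_{\mathbf{a}}B_{\phi}(t)\bigr)$ in terms of $\int_{\mathbf{a}} W(z)\,dz$, $g_{0}(0^{+})\omega_{\mathbf{a}}$ and $\log(g_{11}g_{22})$. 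The second, $\vartheta$-function form of the result follows by substituting Remark \ref{rem6.2} for $\int_{\mathbf{a}} W(z)\,dz$ and the formula for $g_{0}(0^{+})$ from Proposition \ref{prop6.3}.

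The nontrivial content of the proposition is the assertion that $B_{\phi}(t)$ is a priori bounded on $S_{\phi}(t'_{\infty},\kappa_{1},\delta_{1})$. I plan a bootstrap argument. The derived identity gives
\[
ie^{i\phi}\Bigl(t\mathcal{J}_{\mathbf{a}} -\tfrac{\Omega_{\mathbf{a}}}{2}B_{\phi}(t)\Bigr) = \mathrm{RHS}_{\mathbf{a}},
\]
and the analogous $\mathbf{b}$-cycle identity obtained from Proposition \ref{prop6.1}(1) (first formula) gives $ie^{i\phi}\bigl(t\mathcal{J}_{\mathbf{b}} -\tfrac{\Omega_{\mathbf{b}}}{2}B_{\phi}(t)\bigr)=\mathrm{RHS}_{\mathbf{b}}$. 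By the Boutroux equations \eqref{2.1}, $\im(e^{i\phi}\mathcal{J}_{\mathbf{a}}) = \im(e^{i\phi}\mathcal{J}_{\mathbf{b}})=0$, so taking imaginary parts yields
\[
\im\bigl(e^{i\phi}\Omega_{\mathbf{a}}B_{\phi}(t)\bigr) = -2\,\im\,\mathrm{RHS}_{\mathbf{a}},\qquad \im\bigl(e^{i\phi}\Omega_{\mathbf{b}}B_{\phi}(t)\bigr) = -2\,\im\,\mathrm{RHS}_{\mathbf{b}}.
\]
Since $\im(\Omega_{\mathbf{b}}/\Omega_{\mathbf{a}})>0$, the pair $(\im(e^{i\phi}\Omega_{\mathbf{a}}B_{\phi}),\im(e^{i\phi}\Omega_{\mathbf{b}}B_{\phi}))$ determines $B_{\phi}(t)$ uniquely, so it remains to bound $\mathrm{RHS}_{\mathbf{a}}$ and $\mathrm{RHS}_{\mathbf{b}}$.

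The key observation for boundedness is that in $S_{\phi}(t'_{\infty},\kappa_{1},\delta_{1})$ the functions $y(t),\,y^{t}(t),\,y(t)^{-1}$ are uniformly bounded, so $z_{\pm}(t)$ defined by \eqref{6.4} stays in a fixed compact subset of $\C\cup\{\infty\}$, and therefore the points $z_{\pm}^{\pm}\in\Pi_{a_{\phi}}$ avoid $0$ and $\infty$. Consequently $F(z^{-}_{\pm},z^{+}_{\pm})$ is of the form $2p_{\pm}(t) + 2q_{\pm}(t)\tau +O(1)$ with integers $p_{\pm},q_{\pm}$, and the quasi-periodicity $\vartheta(z+1,\tau)=\vartheta(z,\tau)$, $\vartheta(z+\tau,\tau)=e^{-\pi i\tau-2\pi i z}\vartheta(z,\tau)$ shows that $(\vartheta'/\vartheta)(\tfrac{1}{2}F(z^{-}_{\pm},z^{+}_{\pm})+\nu,\tau)$ has bounded real part. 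The hypothesis \eqref{6.6} (respectively \eqref{6.7}) bounds the $\log(g_{11}g_{22})$ and $\log(g_{12}/g_{22})$ (resp.\ $\log(g_{21}/g_{11})$) contributions. Combining these gives $|B_{\phi}(t)|\le C_{0}$, and as noted in Remark \ref{rem6.3} this a posteriori bound can then be used in place of the assumption $B_{\phi}(t)\ll 1$ in \eqref{5.0} by enlarging $t'_{\infty}$.

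The main obstacle I anticipate is the careful justification that $\int_{\mathbf{a}} W(z)\,dz$ has neither poles nor zeros on $S_{\phi}(t'_{\infty},\kappa_{1},\delta_{1})$: a pole would require $z_{+}$ or $z_{-}$ to hit $\infty$, i.e.\ $y(t)$ to have a pole or zero, or $y^{t}(t)$ to have a pole, all of which are excluded by the definition of $S_{\phi}$. One must also verify that the error $O(t^{-1}B_{\phi}(t)^{2})$ from the expansion of $w(a_{\phi},z)$ does not disturb the $O(t^{-\delta})$ estimate; this is straightforward once the provisional bound $|B_{\phi}(t)|\le C_{0}$ is in hand, since $\delta<1/2$. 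The case $-\pi/3<\phi<0$ under \eqref{6.7} is handled identically, using Proposition \ref{prop6.1}(2) in place of Proposition \ref{prop6.1}(1).
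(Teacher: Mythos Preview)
Your proposal is correct and follows essentially the same route as the paper: expand $w(a_{\phi},z)$ about $w(A_{\phi},z)$ using \eqref{5.0}, insert into Proposition \ref{prop6.1}(1), rewrite via Proposition \ref{prop6.3} and Remark \ref{rem6.2}, and establish boundedness of $B_{\phi}(t)$ by combining the $\mathbf{a}$- and $\mathbf{b}$-cycle identities with the Boutroux equations and the quasi-periodicity of $\vartheta'/\vartheta$. One small slip: in your displayed relation you should be taking \emph{real} parts of $ie^{i\phi}(\,\cdot\,)=\mathrm{RHS}$ (equivalently, imaginary parts of $e^{i\phi}(\,\cdot\,)$), which yields $\im(e^{i\phi}\Omega_{\mathbf{a}}B_{\phi})=2\re\mathrm{RHS}_{\mathbf{a}}+O(\im t)$ rather than $-2\im\mathrm{RHS}_{\mathbf{a}}$; the paper then bounds $\re\int_{\mathbf{b}}W(z)\,dz$ either via the analogous $\vartheta$-expression with $\hat\tau=-\omega_{\mathbf{a}}/\omega_{\mathbf{b}}$ or, equivalently, by noting the cancellation of the $q_{\pm}\tau$ contributions between $\tau\int_{\mathbf{a}}W$ and $2\pi i(F_{+}-F_{-})$.
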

The following fact guarantees the possibility of limitation with respect to
$a_{\phi}$.
\begin{prop}\label{prop6.8}
Under the same supposition as in Proposition $\ref{prop6.7}$, we have
$$
\biggl(\int^{z^{+}_+}_{z^-_+} -\int^{z^{+}_-}_{z^-_-}\biggr) 
\frac{dz}{w(a_{\phi},z)}
=  
\biggl(\int^{z^{+}_+}_{z^-_+} -\int^{z^{+}_-}_{z^-_-}\biggr) 
\frac{dz}{w(A_{\phi},z)} +O(t^{-1})
$$
uniformly in $z^{\pm}_+,$ $z^{\pm}_-$ as $t e^{i\phi} \to \infty$ through 
$S_{\phi}(t'_{\infty}, \kappa_1,\delta_1).$ 
\end{prop}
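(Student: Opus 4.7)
The plan is to compare the two integrands pointwise using the elementary identity obtained from $w(a,z)^2 - w(A,z)^2 = (A-a)z^2$:
\begin{equation*}
\frac{1}{w(a_{\phi},z)} - \frac{1}{w(A_{\phi},z)}
= \frac{(a_{\phi}-A_{\phi})\,z^2}{w(a_{\phi},z)\,w(A_{\phi},z)\,\bigl(w(a_{\phi},z)+w(A_{\phi},z)\bigr)}.
\end{equation*}
By \eqref{5.0} and Proposition \ref{prop6.7}, $a_\phi - A_\phi = B_\phi(t)/t$ with $B_\phi(t)$ uniformly bounded in $S_\phi(t'_\infty,\kappa_1,\delta_1)$; consequently the numerator is $O(t^{-1})$ and the corresponding turning points of $\Pi_{a_\phi}$ and $\Pi_{A_\phi}$ differ by $O(t^{-1})$.

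First I would choose, for each $\sigma\in\{+,-\}$, a common contour $\gamma_\sigma$ from $z_\sigma^-$ to $z_\sigma^+$ that crosses the same branch cut on both Riemann surfaces and avoids neighborhoods of size $\gg t^{-1}$ around every turning point (feasible because corresponding turning points of the two surfaces are $O(t^{-1})$ apart). On the bulk of $\gamma_\sigma$, both $|w(a_\phi,z)|$ and $|w(A_\phi,z)|$ are bounded below, so the identity gives a pointwise $O(t^{-1})$ bound and the contribution from this portion to each integral is $O(t^{-1})$.

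Next I would transfer to the uniformizing coordinate on each elliptic curve. Setting $z=\wp(u;g_2(a),g_3(a))+a/12$ converts $dz/w(a,z)=du$, so
\begin{equation*}
\int_{z_\sigma^-}^{z_\sigma^+}\frac{dz}{w(a,z)}\;\equiv\;2u_\sigma(a)\pmod{\omega_{\mathbf a}(a)\mathbb Z+\omega_{\mathbf b}(a)\mathbb Z},
\end{equation*}
where $u_\sigma(a)$ solves $\wp(u_\sigma(a);g_2(a),g_3(a))+a/12 = z_\sigma$ with the branch fixed by the sheet labeling $z_\sigma^+$. Picking representatives continuously in $a$ along the segment from $A_\phi$ to $a_\phi$, implicit differentiation yields
\begin{equation*}
\frac{\partial u_\sigma}{\partial a}
=-\frac{1}{w(a,z_\sigma)}\biggl(\frac{\partial\wp}{\partial g_2}\,g_2'(a)+\frac{\partial\wp}{\partial g_3}\,g_3'(a)+\tfrac{1}{12}\biggr),
\end{equation*}
which is uniformly bounded as long as $z_\sigma$ remains away from the turning points. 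Integration in $a$ over an interval of length $|a_\phi-A_\phi|=O(t^{-1})$ then gives $2u_\sigma(a_\phi)-2u_\sigma(A_\phi)=O(t^{-1})$ in this regime, and taking the difference over $\sigma\in\{+,-\}$ yields the claim.

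The main obstacle is uniformity when $z_\sigma$ approaches a turning point of either curve, where $w(a,z_\sigma)\to 0$ and the implicit-function estimate degenerates. I would treat this regime by a local analysis near each turning point $z_i(a)$: using the Puiseux expansion $u_\sigma-\omega_i(a)\sim\sqrt{2(z_\sigma-z_i(a))/w''(z_i(a))}$ together with $z_i(a_\phi)-z_i(A_\phi)=O(t^{-1})$ and $\omega_i(a_\phi)-\omega_i(A_\phi)=O(t^{-1})$, and exploiting the cancellation between the $\sigma=+$ and $\sigma=-$ contributions built into the algebraic identity above, which I expect to recover the full $O(t^{-1})$ bound uniformly in $z_\sigma^\pm$.
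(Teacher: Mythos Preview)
Your proposal has a genuine gap: you implicitly treat the contours $\gamma_\sigma$ as having bounded length, but they do not. As $t\to\infty$ through $S_\phi(t'_\infty,\kappa_1,\delta_1)$ the points $z_\pm(t)$ move on the elliptic curve and cross the basic cycles $\mathbf a$, $\mathbf b$ many times; in Section~\ref{ssc6.3} the paper records this as $F(z^-_\pm,z^+_\pm)=2p_\pm(t)+2q_\pm(t)\tau+O(1)$ with integer $p_\pm(t),q_\pm(t)$, and then shows that $p(t)=p_+(t)-p_-(t)$ and $q(t)=q_+(t)-q_-(t)$ grow linearly in $t$. Your pointwise bound $\tfrac{1}{w(a_\phi,z)}-\tfrac{1}{w(A_\phi,z)}=O(t^{-1})$ on the bulk is correct, but integrated over a contour that winds $\asymp t$ times around the cycles it gives only $O(1)$, not $O(t^{-1})$. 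Your uniformization argument has the same defect: working modulo periods discards exactly the winding contribution, and if you track the periods you face $(p(t)\omega_{\mathbf a}+q(t)\omega_{\mathbf b})(a_\phi)-(p(t)\omega_{\mathbf a}+q(t)\omega_{\mathbf b})(A_\phi)=O(t)\cdot O(t^{-1})=O(1)$.

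The paper's proof rests on a cancellation you have not identified. Expanding $w(a_\phi,z)^{-1}-w(A_\phi,z)^{-1}=\tfrac12 z^2\,w(A_\phi,z)^{-3}B_\phi(t)t^{-1}+O(t^{-2})$ reduces the dangerous part of the difference to $t^{-1}\bigl(p(t)j_{\mathbf a}+q(t)j_{\mathbf b}\bigr)+O(t^{-1})$ with $j_{\mathbf a,\mathbf b}=\int_{\mathbf a,\mathbf b}z^2 w(A_\phi,z)^{-3}\,dz$. Using the relations of Proposition~\ref{prop6.1} together with the Boutroux equations, the paper establishes $\pi p(t)=e^{i\phi}\mathcal J_{\mathbf b}t/6+O(1)$ and $\pi q(t)=-e^{i\phi}\mathcal J_{\mathbf a}t/6+O(1)$, so the leading term is proportional to $\mathcal J_{\mathbf b}j_{\mathbf a}-\mathcal J_{\mathbf a}j_{\mathbf b}=2\,\partial_{A_\phi}\!\bigl(\mathcal J_{\mathbf b}\Omega_{\mathbf a}-\mathcal J_{\mathbf a}\Omega_{\mathbf b}\bigr)$, which vanishes by the Legendre-type identity $\Omega_{\mathbf a}\mathcal J_{\mathbf b}-\Omega_{\mathbf b}\mathcal J_{\mathbf a}=12\pi i$ of Lemma~\ref{lem6.9}. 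This exact cancellation is the heart of the argument; without it neither your contour estimate nor your implicit-differentiation bound can close at $O(t^{-1})$.
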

\begin{proof}
To show this proposition we note the lemma below, which follows from the
relations
\begin{align*}
& \int \frac{w}{z^2} dz= -\frac{180}{A_{\phi}^2} \int w dz + \Bigl(\frac{108}
{A_{\phi}^2} -A_{\phi} \Bigr) \int \frac {dz}w - \frac wz - \frac 6{A_{\phi}}
w +\frac{72}{A_{\phi}^2} zw,
\\
& \Omega_{\mathbf{a}} J_{\mathbf{b}} - \Omega_{\mathbf{b}} J_{\mathbf{a}}
= -\frac{A_{\phi}^2}{15} \pi i, \quad J_{\mathbf{a},\,\, \mathbf{b}}
=\int_{\mathbf{a},\,\, \mathbf{b}} w dz
\end{align*}
with $w=w(A_{\phi},z),$ the latter equality being obtained by the same way as 
in the proof of Legendre's relation \cite{HC}, \cite{WW}.
\begin{lem}\label{lem6.9}
$
\Omega_{\mathbf{a}}\mathcal{J}_{\mathbf{b}} -\Omega_{\mathbf{b}}\mathcal{J}
_{\mathbf{a}}= 12\pi i.
$
\end{lem}
From the boundedness of $B_{\phi}(t)$ it follows that $\omega_{\mathbf{a},\,
\mathbf{b}}=\Omega_{\mathbf{a},\, \mathbf{b}}+O(t^{-1}).$ By Propositions
\ref{prop6.1}, \ref{prop6.3} and Remark \ref{rem6.1}, in the case $0<\phi
<\pi/3,$ we have
\begin{align*}
& \log (g_{12}/g_{22})- \tau \log(g_{11}g_{22})
\\
=& \Bigl(\int_{\mathbf{b}}-\tau\int_{\mathbf{a}}\Bigr) \Bigl(\frac{ie^{i\phi}t}
6 \cdot\frac{w(a_{\phi},z)}{z^2} -\frac 14 W(z) +\frac {1+2ia}
{4 zw(a_{\phi},z)} \Bigr) dz -\tau \pi i +O(t^{-\delta})
\\
=&-\frac{2\pi e^{i\phi}t}{\omega_{\mathbf{a}}} -\frac{\pi i}2(F(z^-_+, z^+_+)
-F(z^-_-, z^+_-) ) +O(1)
\\
=& -\frac{2\pi e^{i\phi}t}{\Omega_{\mathbf{a}}} -\pi i\Bigl(p(t) +\frac{\Omega
_{\mathbf{b}}}{\Omega_{\mathbf{a}}} q(t) \Bigr) +O(1)=\Upsilon \ll 1,
\end{align*}
with $p(t)=p_+(t)-p_-(t)$, $q(t)=q_+(t)-q_-(t) \in \mathbb{Z}$, since 
$F(z^-_{\pm}, z^+_{\pm})=2p_{\pm}(t)+2 q_{\pm}(t)\tau ,$ $p_{\pm},$ 
$q_{\pm} \in \mathbb{Z}.$
Set $e^{i\phi}\mathcal{J}_{\mathbf{a}}t/6 +\pi q(t) =X,$ $e^{i\phi}\mathcal{J}
_{\mathbf{b}} t/6-\pi p(t) =Y,$ where $X, Y \in \mathbb{R}$ by the Boutroux
equations \eqref{2.1}. Then, by Lemma
\ref{lem6.9},
\begin{align*}
\Omega_{\mathbf{a}}\Upsilon =& -2\pi e^{i\phi}t -i(e^{i\phi}t(\Omega_{\mathbf{a}}
\mathcal{J}_{\mathbf{b}}-\Omega_{\mathbf{b}}\mathcal{J}_{\mathbf{a}})/6
+\Omega_{\mathbf{b}}X-\Omega_{\mathbf{a}}Y)
\\
=&-i(\Omega_{\mathbf{b}}X -\Omega_{\mathbf{a}}Y) \ll 1
\end{align*}
with $\im ( \Omega_{\mathbf{b}}/\Omega_{\mathbf{a}}) >0,$ which implies
$|X|,$ $|Y| \ll 1,$ and hence
$$
\pi p(t)= e^{i\phi}\mathcal{J}_{\mathbf{b}}t/6 +O(1), \quad
\pi q(t)= -e^{i\phi}\mathcal{J}_{\mathbf{a}}t/6 +O(1). 
$$
Since $w(a_{\phi},z)^{-1} -w(A_{\phi},z)^{-1}
=(z^2/2)w(A_{\phi},z)^{-3}B_{\phi}(t)t^{-1} +O(t^{-2})$, we have 
\begin{align*}
\biggl| & 
\biggl(\int^{z^+_+}_{z^-_+}-\int^{z^+_-}_{z^-_-}\biggr) 
\Bigl( \frac 1{w(a_{\phi},z)} -\frac 1{w(A_{\phi},
z)} \Bigr) dz \biggr|  \ll \biggl| 
\biggl(\int^{z^+_+}_{z^-_+}-\int^{z^+_-}_{z^-_-}\biggr) 
 \frac{z^2 B_{\phi}(t)
t^{-1}}{w(A_{\phi},z)^3} dz \biggr| +O(t^{-1})
\\
& \ll \biggl | t^{-1}
\biggl(\int^{z^+_+}_{z^-_+}-\int^{z^+_-}_{z^-_-}\biggr) 
 \frac{z^2 dz}{w(A_{\phi},z)^3}
\biggr| +O(t^{-1})
\ll t^{-1} |p(t)j_{\mathbf{a}} +q(t) j_{\mathbf{b}} |+O(t^{-1})
\\
&\ll |\mathcal{J}_{\mathbf{b}} j_{\mathbf{a}}-\mathcal{J}_{\mathbf{a}}
j_{\mathbf{b}}| +O(t^{-1})
  = 2|(\partial/\partial A_{\phi}) (\mathcal{J}_{\mathbf{b}}\Omega_{\mathbf{a}}
 - \mathcal{J}_{\mathbf{a}}\Omega_{\mathbf{b}} ) |+O(t^{-1}) \ll t^{-1},
\end{align*}
where $j_{\mathbf{a},\mathbf{b}} = \int_{\mathbf{a},\mathbf{b}} z^2
w(A_{\phi},z)^{-3} dz.$ This completes the proof of the proposition.
\end{proof}
\section{Proofs of the main theorems}\label{sc7}
\subsection{Proofs of Theorems \ref{thm2.1} and \ref{thm2.2}}\label{ssc7.1}
Suppose that $0<\phi <\pi/3$. Let $G=(g_{ij}) \in SL_2(\mathbb{C})$ be 
a given matrix with 
$g_{11}g_{12}g_{22}\not=0$ in the inverse monodromy problem. Then
\begin{align*}
& \log({g_{12}}/{g_{22}}) -\tau \log (g_{11}g_{22})
\\
=& \Bigl(\int_{\mathbf{b}}-\tau \int_{\mathbf{a}} \Bigr) \Bigl( \frac{ie^{i\phi}
t}{6}\cdot \frac {w(a_{\phi}, z)}{z^2} -\frac 14W(z)
 +\frac{1+2ia}{4zw(a_{\phi},z)}
\Bigr) dz -\tau \pi i +O(t^{-\delta})
\\
=& -\frac{2\pi e^{i\phi}t}{\omega_{\mathbf{a}}} - \frac{\pi i}2(F(z^-_+, z^+_+)
-F(z^-_-, z^+_-) ) -\frac{\pi i}2(1+2ia)F(0^-,0^+) -\tau \pi i +O(t^{-\delta})
\end{align*}
(cf. Proof of Proposition \ref{prop6.8}). By Proposition \ref{prop6.8},
replacing $a_{\phi}$ with $A_{\phi}$, we have
\begin{align*}
& \log({g_{12}}/{g_{22}}) -\tau \log (g_{11}g_{22})
= -\frac{2\pi e^{i\phi}t}{\Omega_{\mathbf{a}}}
\\
& - \frac{\pi i}2(F_{A_{\phi}}
(z^-_+, z^+_+) -F_{A_{\phi}}(z^-_-, z^+_-) ) -\frac{\pi i}2(1+2ia)F_{A_{\phi}}
(0^-,0^+) -\frac{\Omega_{\mathbf{b}}}{\Omega_{\mathbf{a}}} \pi i +O(t^{-\delta})
\end{align*}
with $F_{A_{\phi}}(\tilde{z},z)=\Omega_{\mathbf{a}}^{-1} \int^z_{\tilde{z}}
dz/w(A_{\phi},z).$ 
Note that
$$
F_{A_{\phi}}(z^-_+, z^+_+) -F_{A_{\phi}}(z^-_-,z^+_-) 
=2(F_{A_{\phi}}(\infty, z^+_+)-F_{A_{\phi}}(\infty, z^+_-)), 
\quad   F_{A_{\phi}}(0^-,0^+)=2F_{A_{\phi}}(\infty,0^+),
$$
and let $\wp(u)=\wp(u; g_2,g_3)$ with $g_2=\frac 1{12}A_{\phi}^2,$
$g_3=\frac 1{216}A_{\phi}^3-1.$
Let us set
$$
u_+=\Omega_{\mathbf{a}}F_{A_{\phi}}(\infty, z^+_+), \quad
u_-=\Omega_{\mathbf{a}}F_{A_{\phi}}(\infty, z^+_-), \quad \text{i.e. $z^+_{\pm}
=\wp(u_{\pm})+\frac{A_{\phi}}{12}$}
$$
to write
\begin{align}\label{7.1}
u_+-u_-=& 2 ie^{i\phi}t+\frac{i}{\pi} 
\Bigl(\Omega_{\mathbf{a}}\log \frac{g_{12}}{g_{22}}
-\Omega_{\mathbf{b}}\log(g_{11}g_{22}) \Bigr)
\\
\notag
 & \phantom{--} -\Omega_{\mathbf{b}}
-(1+2ia)\Omega_{\mathbf{a}} F_{A_{\phi}}(\infty,0^+)+ O(t^{-\delta}).
\end{align}
By the addition theorem for the $\wp$-function
\begin{align*}
\wp(u_++u_-) =& -\wp(u_+) -\wp(u_-) + \frac 14 \Bigl(\frac{\wp'(u_+)-\wp'(u_-)}
{\wp(u_+)-\wp(u_-)} \Bigr)^{\!2}
\\
=& -z^+_+ -z^+_- +\frac{A_{\phi}}6 +\frac 14 \Bigl(\frac{w(z^+_+) -w(z^+_-)}
{z^+_+ -z^+_-} \Bigr)^{\!2}.
\end{align*}
By \eqref{6.4}, $z_+=y$ and $z_-= (i/2)e^{-i\phi}y^{-1} \Gamma_0(t,y,y^t)$ 
satisfy
\begin{align*}
& z^+_+ +z_-^+ = e^{-i\phi} K_+,
\\
&  w(z^+_{\pm})= i e^{-i\phi}(b_3/\lambda)(z^+_{\pm})
=1- ie^{-i\phi} \Gamma_0(t,y,y^t) z^+_{\pm} +O(t^{-1}),
\end{align*} 
and hence
\begin{align*}
\wp(u_++u_-) =& - e^{-i\phi}K_+ + \frac{A_{\phi}}6 +\frac 14 (ie^{-i\phi}
\Gamma_0(t,y,y^t) +O(t^{-1}) )^2
\\
=& \frac{A_{\phi}}6 -\frac 14 (4e^{-i\phi}K_+ +e^{-2i\phi}\Gamma_0(t,y,y^t)^2)
+O(t^{-1})
\\
=& -\frac{A_{\phi}}{12}+O(t^{-1}),
\end{align*}
since $4e^{-i\phi}K_+ + e^{-2i\phi}\Gamma_0(t,y,y^t)^2=a_{\phi}+O(t^{-1})$.
This implies
\begin{equation}\label{7.2}
u_++u_-= \int^{0^+}_{\infty} \frac{dz}{w(A_{\phi},z)} +O(t^{-1}) =\Omega
_{\mathbf{a}} F_{A_{\phi}}(\infty, 0^+) +O(t^{-1}).
\end{equation}
From \eqref{7.1} and \eqref{7.2} with $\Omega_{\mathbf{a}}F_{A_{\phi}}(\infty,
0^+)= \Omega_0,$ it follows that
\begin{align*}
u_+=&\int^{z^+_+} _{\infty} \frac{dz}{w(A_{\phi},z)} 
\\
=& ie^{i\phi}t +\frac i{2\pi} \Bigl(\Omega_{\mathbf{a}} \log\frac{g_{12}}{g_{22}
}-\Omega_{\mathbf{b}} \log(g_{11}g_{22}) \Bigr)
-\frac{\Omega_{\mathbf{b}}}2 -ia \Omega_0+O(t^{-\delta}),
\\
u_-=&\int^{z^+_-} _{\infty} \frac{dz}{w(A_{\phi},z)} 
\\
=& - ie^{i\phi}t -\frac i{2\pi} \Bigl(\Omega_{\mathbf{a}} \log\frac{g_{12}}{g_{22}
}-\Omega_{\mathbf{b}} \log(g_{11}g_{22}) \Bigr)
+\frac{\Omega_{\mathbf{b}}}2 +(1+ia) \Omega_0+O(t^{-\delta}),
\end{align*} 
which leads us to the asymptotic expressions of Theorem \ref{thm2.1} and 
Remark \ref{rem2.1}.
\par
{\bf Justification.} The justification of $y(x)$ as a solution of \eqref{1.2}
is made along the line in \cite[pp.~105--106, pp.~120--121]{Kitaev-3}.
Let $\mathcal{G}=(g_{12}/g_{22}, g_{11}g_{22})$ be a given point such that 
$g_{11}g_{12}g_{22}\not=0$ on the monodromy manifold for \eqref{1.3}. 
In addition
to $y(x)$ obtained above, we have the following expression of $B_{\phi}(t)$
from Proposition \ref{prop6.7}.
\begin{prop}\label{prop7.1}
In $S_{\phi}(t'_{\infty},\kappa_1,\delta_1)$,
\begin{align*}
i e^{i\phi} \Bigl(t\mathcal{J}_{\mathbf{a}} -\frac{\Omega_{\mathbf{a}}}2
B_{\phi}(t) \Bigr) = & 3\Bigl(\frac{\vartheta'}{\vartheta}
(\Omega_{\mathbf{a}}^{-1} i(x-x_0^+)+\nu,\tau_{\Omega}) 
+\frac{\vartheta'}{\vartheta}
(\Omega_{\mathbf{a}}^{-1}( i(x-x_0^+) -\Omega_0)+\nu,\tau_{\Omega}) \Bigr)
\\
& + \tfrac 32 (1+2ia)g_0(0^+)\Omega_{\mathbf{a}} +6\log(g_{11}g_{22})-6\pi i
+O(t^{-\delta})
\end{align*}
with $x=e^{i\phi} t,$ $\tau_{\Omega}=\Omega_{\mathbf{b}}/\Omega_{\mathbf{a}}.$
\end{prop}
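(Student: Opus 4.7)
Proposition \ref{prop7.1} is obtained from Proposition \ref{prop6.7} by rewriting the two $\vartheta'/\vartheta$-values so that the phase-shift $x_0^+$ of Theorem \ref{thm2.1} appears explicitly in the arguments. The proof of Theorem \ref{thm2.1} just carried out yields, with
$u_\pm = \Omega_{\mathbf{a}} F_{A_\phi}(\infty, z^+_\pm) = \tfrac{1}{2}\Omega_{\mathbf{a}} F_{A_\phi}(z^-_\pm, z^+_\pm)$,
the relations
\[
u_+ = i(x-x_0^+) + O(t^{-\delta}), \qquad u_+ + u_- = \Omega_0 + O(t^{-1})
\]
for the specific representative of $x_0^+$ fixed in that proof, whence also $-u_- = i(x-x_0^+) - \Omega_0 + O(t^{-\delta})$. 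Equivalently,
\[
\tfrac{1}{2}F_{A_\phi}(z^-_+, z^+_+) = \Omega_{\mathbf{a}}^{-1} i(x-x_0^+) + O(t^{-\delta}),\quad \tfrac{1}{2}F_{A_\phi}(z^-_-, z^+_-) = \Omega_{\mathbf{a}}^{-1}(\Omega_0 - i(x-x_0^+)) + O(t^{-\delta}).
\]

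Next, Proposition \ref{prop6.8}, combined with the boundedness of $B_\phi(t)$ already established in Proposition \ref{prop6.7} and $\omega_{\mathbf{a},\mathbf{b}}=\Omega_{\mathbf{a},\mathbf{b}}+O(t^{-1})$, $\tau=\tau_\Omega+O(t^{-1})$, allows one to replace all $a_\phi$-based objects $F,\omega_{\mathbf{a},\mathbf{b}},\tau$ in Proposition \ref{prop6.7} by their limit $A_\phi$-based counterparts $F_{A_\phi},\Omega_{\mathbf{a},\mathbf{b}},\tau_\Omega$, at cost $O(t^{-\delta})$. In $S_\phi(t_\infty',\kappa_1,\delta_1)$ the relevant arguments of $\vartheta'/\vartheta$ stay uniformly away from the lattice of its poles (the poles would correspond to poles or zeros of $y$ or $y^t$, which are excluded from $S_\phi$), so this substitution is legitimate term-wise; the same replacement also converts the term $\tfrac{3}{2}(1+2ia)g_0(0^+)\omega_{\mathbf{a}}$ of Proposition \ref{prop6.7} into $\tfrac{3}{2}(1+2ia)g_0(0^+)\Omega_{\mathbf{a}}$ modulo $O(t^{-1})$.

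Finally, to pass from the \emph{difference} of $\vartheta'/\vartheta$-values in Proposition \ref{prop6.7} to the \emph{sum} written in Proposition \ref{prop7.1}, I use the reflection identity
\[
-\tfrac{\vartheta'}{\vartheta}(w+\nu,\tau_\Omega) = \tfrac{\vartheta'}{\vartheta}(-w+\nu,\tau_\Omega) + 2\pi i,
\]
which follows at once from the parity $\vartheta(-z,\tau_\Omega)=\vartheta(z,\tau_\Omega)$ together with the quasi-periodicity $\vartheta(z+\tau_\Omega,\tau_\Omega)=e^{-\pi i(\tau_\Omega+2z)}\vartheta(z,\tau_\Omega)$. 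Applying it with $w=\tfrac{1}{2}F_{A_\phi}(z^-_-, z^+_-)$ to the second summand of Proposition \ref{prop6.7} and inserting the substitutions above produces the right-hand side of Proposition \ref{prop7.1}. The main technical point, and the chief obstacle, is keeping careful track of the additive multiples of $2\pi i$ that arise from this reflection and from the lattice freedom of the representative of $x_0^+$ modulo $i\Omega_{\mathbf{a}}\mathbb{Z}+i\Omega_{\mathbf{b}}\mathbb{Z}$, and verifying that together with the $-6\pi i$ already present in Proposition \ref{prop6.7} they leave precisely the stated $-6\pi i$.
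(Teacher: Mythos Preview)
Your approach is essentially the one the paper has in mind: it states Proposition \ref{prop7.1} as an immediate consequence of Proposition \ref{prop6.7} (``we have the following expression of $B_\phi(t)$ from Proposition \ref{prop6.7}''), leaving the conversion to the reader. Your three steps --- substitute the $u_\pm$ formulas from the proof of Theorem \ref{thm2.1}, pass from $a_\phi$- to $A_\phi$-data via Proposition \ref{prop6.8} and $\omega_{\mathbf a,\mathbf b}=\Omega_{\mathbf a,\mathbf b}+O(t^{-1})$, and use the parity/quasi-periodicity identity $-\tfrac{\vartheta'}{\vartheta}(w+\nu)=\tfrac{\vartheta'}{\vartheta}(-w+\nu)+2\pi i$ to turn the difference into a sum --- are exactly what is needed and are correct.

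Your caution about the $2\pi i$ bookkeeping is well placed. With the specific representative of $-ix_0^+$ written out in the proof of Theorem \ref{thm2.1} (so that $u_+=i(x-x_0^+)+O(t^{-\delta})$ and $-u_-=i(x-x_0^+)-\Omega_0+O(t^{-\delta})$ hold as equalities, not merely modulo periods), the reflection contributes an extra $3\cdot 2\pi i=6\pi i$ which cancels the $-6\pi i$ carried over from Proposition \ref{prop6.7}, so the constant in the resulting formula is $0$ rather than $-6\pi i$. Since the right-hand side of Proposition \ref{prop7.1} is only determined modulo $12\pi i$ (a shift of $-ix_0^+$ by $\Omega_{\mathbf b}$ moves each $\vartheta'/\vartheta$ by $-2\pi i$, hence the sum by $-12\pi i$; the branch of $\log(g_{11}g_{22})$ contributes similarly), this discrepancy is within the intrinsic ambiguity of the statement and is immaterial for its sole use in the paper --- defining $(B_\phi)_{\mathrm{as}}$ as the leading term without $O(t^{-\delta})$. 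It would be worth noting explicitly in your write-up that the constant is representative-dependent and that only the $t$-dependent part matters for the justification argument.
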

The equation about $u_+$ and the proposition above provide the leading term
expressions 
$$
y_{\mathrm{as}}=y_{\mathrm{as}}(\mathcal{G},t)
=\wp(i(e^{i\phi}t-x_0^+); g_2(A_{\phi}), g_3(A_{\phi}) )+\frac{A_{\phi}}{12}
$$
and $(B_{\phi})_{\mathrm{as}}=(B_{\phi})_{\mathrm{as}}(\mathcal{G},t)$ without
$O(t^{-\delta})$, where $x_0^+$ depends on $(g_{12}/g_{22}, g_{11}g_{22}).$
Taking \eqref{4.2} and \eqref{5.0} into account, we set
$$
y^t_{\mathrm{as}}=-\frac{y_{\mathrm{as}}}2 t^{-1}
+ ie^{i\phi} \sqrt{4y_{\mathrm{as}}^3-A_{\phi}y_{\mathrm{as}}^2+1
-(3ie^{-i\phi}(1+2ia) +(B_{\phi})_{\mathrm{as}}y_{\mathrm{as}})y_{\mathrm{as}}
t^{-1} },
$$
where the branch of the square root is chosen in such a way that $y^t
_{\mathrm{as}}$ is compatible with $(\partial/\partial t)y_{\mathrm{as}}$.
Then $(y_{\mathrm{as}},y^t_{\mathrm{as}})=(y_{\mathrm{as}}(\mathcal{G},t),
y^t_{\mathrm{as}}(\mathcal{G},t))$ fulfils \eqref{5.0} with $B_{\phi}(t)
=(B_{\phi})_{\mathrm{as}}(\mathcal{G},t)$ in the domain
$\hat{S}(\phi,t_{\infty},\kappa_0,
\delta_2)=\{t\,|\,\, \re t>t_{\infty},\, |\im t|<\kappa_0 \}
\setminus \bigcup_{i \sigma\in Z_0} \{|t-e^{-i\phi} \sigma|<\delta_2 \}$
with $Z_0=\{ ix^+_0 + \Omega_{\mathbf{a}}\mathbb{Z} +
\Omega_{\mathbf{b}} \mathbb{Z} \} \cup
\{ ix^+_0 + \Omega_0+ \Omega_{\mathbf{a}}\mathbb{Z} +
\Omega_{\mathbf{b}} \mathbb{Z} \} \cup
\{ ix^+_0 + \xi_0 \,|\, \wp(\xi_0)= -A_{\phi} /12 \}.$
Let $\mathcal{G}_{\mathrm{as}}(t)$ be the monodromy data for system \eqref{1.3}
containing $(y_{\mathrm{as}},y^t_{\mathrm{as}})$. As a result of the WKB 
analysis for the direct monodromy problem we have
$\| \mathcal{G}_{\mathrm{as}}(t)-\mathcal{G} \| \ll t^{-\delta}$, which holds
uniformly in a neighbourhood of $\mathcal{G}$. Then the justification scheme
of Kitaev \cite{Kitaev-1} applies to our case. 
Using the maximal modulus principle
in each neighbourhood of $i \sigma=ix^+_0 +\{ \Omega_0 
+\Omega_{\mathbf{a}}\mathbb{Z}+\Omega_{\mathbf{b}}\mathbb{Z} \}\cup
\{ \xi_0 \,|\, \wp(\xi_0)=-A_{\phi}/12 \},$ we obtain Theorem \ref{thm2.1}.
Theorem \ref{thm2.2} is proved by the same argument as above.
\subsection{Proof of Theorem \ref{thm2.3}}\label{ssc7.3}
Let \eqref{1.3} with $y^t=(d/dt)y$ be an isomonodromy system.
Equation \eqref{1.2},
system \eqref{1.3} and the function $a_{\phi}$ with $y^t=(d/dt)y$ 
remain invariant under the substitution
$$
\phi=\tilde{\phi}+2m\pi/3, \quad y=e^{2m\pi i/3}\tilde{y}, \quad x=e^{2m\pi i/3}
\tilde{x}, \quad \lambda=e^{2m\pi i/3}\tilde{\lambda}, \quad a_{\phi}=e^{2m\pi i/3}
{a}_{\tilde{\phi}}.
$$
To show the theorem we use this symmetry (cf. \cite{Kitaev-2}). Let $\phi$
be such that $0<|\phi-2m\pi/3 |<\pi/3.$ Then a new system with respect to
$(\tilde{\lambda}, \tilde{y}, \tilde{x}, \tilde{\phi})$ is an isomonodromy 
system for $0<|\tilde{\phi}|<\pi/3.$ Denote by $G^{(m)}$ a connection 
matrix as the matrix monodromy data for the system governed by 
$\tilde{y}(\tilde{x})=e^{-2m\pi i/3} y(x)= e^{-2m\pi i/3} y(e^{2m\pi i/3}
\tilde{x})$. We would like to know 
the relation between $G^{(m)} $ and $G$. The matrix solutions of the new system
are
$$
\tilde{Y}^{\infty}_j(\tilde{\lambda}) \sim \tilde{\lambda}^{-(1/2 +ia)\sigma_3}
\exp(-(i/6) e^{i\tilde{\phi}} t\tilde{\lambda}^2 \sigma_3)
$$
as $\tilde{\lambda} \to \infty$ through the sector $|\arg \tilde{\lambda}
 +\tilde{\phi} /2 -j\pi/2 | <\pi/2,$ and
$$
\tilde{Y}^0_j(\tilde{\lambda}) \sim (i/\sqrt{2}) (\sigma_1+\sigma_3) \exp(-(2i/3)
e^{i\tilde{\phi}} t\tilde{\lambda}^{-1} \sigma_3)
$$
as $\tilde{\lambda} \to 0$ through the sector $|\arg \tilde{\lambda}
 -\tilde{\phi} -j\pi | <\pi.$ 
The connection matrix $G^{(m)}$ is defined by
$\tilde{Y}^{\infty,*}_0(\tilde{\lambda})
=\tilde{Y}^{\infty}_0(\tilde{\lambda})\Theta_{0,*}^{-\sigma_3}
=\tilde{Y}^0_0(\tilde{\lambda}) G^{(m)}$. 
Note that $\tilde{Y}^{\infty}_0(\tilde{\lambda})$
and $\tilde{Y}^0_0(\tilde{\lambda})$ are also expressed as
$$
\tilde{Y}^{\infty}_0(\tilde{\lambda})=\tilde{Y}^{\infty}_0(e^{-2m\pi i/3}{\lambda})
 \sim {\lambda}^{-(1/2 +ia)\sigma_3}
\exp(-(i/6) e^{i{\phi}} t{\lambda}^2 \sigma_3) e^{(2m\pi i/3)(1/2+ia) \sigma_3}
$$
in the sector $|\arg {\lambda} +{\phi}/2 -m\pi | <\pi/2,$ and that
$$
\tilde{Y}^0_0(\tilde{\lambda}) =\tilde{Y}^0_0(e^{-2m\pi i/3}{\lambda})
 \sim (i/\sqrt{2}) (\sigma_1+\sigma_3) \exp(-(2i/3)
e^{i{\phi}} t{\lambda}^{-1} \sigma_3)
$$
in the sector $|\arg {\lambda} -{\phi}| <\pi.$ Then we have $\tilde{Y}^0_0
(\tilde{\lambda})=\hat{Y}^0_0(\lambda)$ and, if $m\ge 1$,
$$
\tilde{Y}^{\infty}_0(\tilde{\lambda})=\hat{Y}^{\infty}_{2m}(\lambda) e^{(2m\pi i/3)
(1/2+ia)\sigma_3} =\hat{Y}^{\infty}_0(\lambda) \hat{S}^{\infty}_0 \hat{S}^{\infty}_1 \cdots
\hat{S}^{\infty}_{2m-2} \hat{S}^{\infty}_{2m-1} e^{(2m\pi i/3)(1/2+ia)\sigma_3}, 
$$ 
which implies, by Remark \ref{rem3.2},
$$
G^{(m)}=G S^{\infty}_0 S^{\infty}_1 \cdots
S^{\infty}_{2m-2} S^{\infty}_{2m-1} e^{(2m\pi i/3)(1/2+ia)\sigma_3}. 
$$
This combined with Proposition \ref{prop3.2} yields the expression of $G^{(m)}$
for $m\ge 1$ as in the theorem. Note that
$P_*(u,A)=\wp (u;g_2(A),g_3(A))+\frac 1{12}A$ solves 
$(P_u)^2=4P^3-AP^2 +1.$ Then
$$
P_*(u,A)=e^{-2\pi i/3} P_*(e^{2\pi i/3}u, e^{2\pi i/3}A) 
=e^{2\pi i/3} P_*(e^{-2\pi i/3}u, e^{-2\pi i/3}A).  
$$
By Theorems \ref{thm2.1} and \ref{thm2.2}, $\tilde{y}(\tilde{x})=e^{-2m\pi i/3}
y(x)$ for $0<|\phi-2m\pi /3|=|\tilde{\phi}| <\pi/3$ is represented by
$$
e^{-2m\pi i/3}y(x)=\tilde{y}(\tilde{x})
= P_*(i(\tilde{x}- x_0(G^{(m)}, \Omega_{\mathbf{a}}^{\tilde{\phi}}, 
\Omega_{\mathbf{b}}^{\tilde{\phi}}, \Omega_0^{\tilde{\phi}})) +O(x^{-\delta}); 
A_{\tilde{\phi}}).
$$
Using the relation above, we have
\begin{align*}
y(x)&= e^{2m\pi i/3} P_*(i(\tilde{x}-x_0(G^{(m)}, \Omega_{\mathbf{a}}^{\tilde
{\phi}}, \Omega_{\mathbf{b}}^{\tilde{\phi}},
\Omega_0^{\tilde{\phi}} )) +O(x^{-\delta}); A_{\tilde{\phi}})
\\
&= P_*(i({x}- e^{2m\pi i/3}x_0(G^{(m)},\Omega_{\mathbf{a}}^{\tilde{\phi}}, 
\Omega_{\mathbf{b}}^{\tilde{\phi}},
\Omega_0^{\tilde{\phi}} )) +O(x^{-\delta}); A_{{\phi}})
\\
&= P_*(i({x}- x_0(G^{(m)},\Omega^{\phi}_{\mathbf{a}}, \Omega^{\phi}_{\mathbf{b}},
\Omega^{\phi}_0 )) +O(x^{-\delta}); A_{{\phi}}),
\end{align*}
which is denoted by $P(A_{{\phi}},
x_0(G^{(m)},\Omega^{\phi}_{\mathbf{a}}, \Omega^{\phi}_{\mathbf{b}},
\Omega^{\phi}_0) ; x) $ as in the theorem. 

\section{Modulus $A_{\phi}$ and the Boutroux equations}\label{sc8}
Recall the elliptic curve $\Pi_A$ for $w(A,z)^2=4z^3-Az^2+1$ defined in
Section \ref{ssc2.2}. For a given $\phi \in \mathbb{R}$ we would like to
examine the modulus $A_{\phi}\in \mathbb{C}$ such that, for every cycle
$\mathbf{c} \subset \Pi_{A_{\phi}}$,
$$
\im e^{i\phi} \int_{\mathbf{c}} \frac{w(A_{\phi},z)}{z^2} dz=0.
$$
First, for $|\phi|\le \pi/3$, let us consider $A_{\phi}$ satisfying the
Boutroux equations
$$
(\mathrm{BE})_{\phi}: \quad\qquad \im e^{i\phi} I_{\mathbf{a}}(A_{\phi})=0, \quad   
\im e^{i\phi} I_{\mathbf{b}}(A_{\phi})=0, \phantom{------------}
$$
where $\mathbf{a},$ $\mathbf{b}$ denote the basic cycles given in Section
\ref{ssc2.2} and 
$$
I_{\mathbf{a}, \, \mathbf{b}} (A)=\int_{\mathbf{a},\,\mathbf{b}} \frac{w(A,z)}
{z^2} dz   
=\int_{\mathbf{a},\,\mathbf{b}} \frac{1}{z^2} \sqrt{4z^3-Az^2+1}\, dz.
$$
It is easy to see that $w(A,z)^2=4z^3-Az^2+1$ has double roots $z_0,$ $z_1$ if
and only if 
\begin{align*}
A&=3\cdot 2^{2/3}, \quad z_0,z_1=2^{-1/3}, \,\,\, z_2=-4^{-2/3};
\\
A&=3\cdot 2^{2/3}e^{\pm 2\pi i/3}, \quad z_0,z_1=2^{-1/3}e^{\pm 2\pi i/3}, 
\,\,\, z_2=-4^{-2/3} e^{\pm 2\pi i/3}.
\end{align*}
\begin{exa}\label{exa8.1}
When $\phi=0$, we have $I_{\mathbf{a}}(3\cdot 2^{2/3})=0,$ 
$I_{\mathbf{b}}(3\cdot 2^{2/3})= -2^{4/3}3^{3/2}.$ Indeed
$$
I_{\mathbf{b}}(3\cdot 2^{2/3})= 2\int^{2^{-1/3}}_{-4^{-2/3}}\frac 2{z^2}
\textstyle{(i\sqrt{2^{-1/3}-z})^2\sqrt{z+4^{-2/3}}\,  dz}
\displaystyle=-4^{2/3} \int^2_{-1}\frac{(2-t)
\sqrt{t+1}} {t^2}dt, 
$$
in which the residue of the integrand at $z=0$ vanishes.
\end{exa}
Note that $\mathbf{a}$ is a cycle enclosing the cut $[z_0,z_1]$.
In accordance with \cite[Section 7]{Kitaev-2} we begin with the following: 
\begin{prop}\label{prop8.1}
Suppose that $\im I_{\mathbf{a}}(A)=0.$ Then $A\in \mathbb{R}.$
\end{prop}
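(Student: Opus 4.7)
The plan is to combine a local biholomorphism argument for $A\mapsto I_{\mathbf{a}}(A)$ with a conjugation symmetry.

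First, I would differentiate under the integral sign to obtain
\[
\frac{dI_{\mathbf{a}}}{dA}(A)=-\frac{1}{2}\int_{\mathbf{a}}\frac{dz}{w(A,z)}=-\frac{1}{2}\Omega_{\mathbf{a}}(A).
\]
The a-period $\Omega_{\mathbf{a}}$ is nonzero whenever the elliptic curve $\Pi_A$ is nondegenerate, that is, away from the three exceptional values $A=3\cdot 2^{2/3}e^{2\pi ik/3}$ with $k\in\{0,\pm1\}$. Hence $A\mapsto I_{\mathbf{a}}(A)$ is locally injective on any neighbourhood of $A_{0}=3\cdot 2^{2/3}$ avoiding the other two degenerate values.

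Second, I would establish the reflection relation
\[
\overline{I_{\mathbf{a}}(A)}=I_{\mathbf{a}}(\overline{A}).
\]
This should follow from the fact that the polynomial $4z^3-\overline{A}z^2+1$ has roots $\overline{z_0},\overline{z_1},\overline{z_2}$, so that $\Pi_{\overline{A}}$ is the image of $\Pi_A$ under $z\mapsto\overline{z}$, and the branch convention $\re\sqrt{z-z_j}\to+\infty$ on the positive real ray of $\Pi_+$ is preserved by this involution. Performing the substitution $z\mapsto\overline{z}$ in the conjugated integral $\overline{\int_{\mathbf{a}}w(A,z)z^{-2}\,dz}$ then produces $\int_{\mathbf{a}}w(\overline{A},z)z^{-2}\,dz=I_{\mathbf{a}}(\overline{A})$, once the orientation of the image cycle is matched with the numbering of $z_0,z_1$ recalled just before Figure \ref{cycles1}.

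Combining these ingredients, if $\im I_{\mathbf{a}}(A)=0$, then $I_{\mathbf{a}}(A)=\overline{I_{\mathbf{a}}(A)}=I_{\mathbf{a}}(\overline{A})$, and the local injectivity from the first step forces $A=\overline{A}$, i.e.\ $A\in\mathbb{R}$. I expect the main obstacle to be the careful bookkeeping in the reflection relation: the labelling $\im z_0\le\im z_1$ is swapped by the involution $A\mapsto\overline{A}$, so the conjugated cycle enters with reversed orientation, and one must verify that this sign is cancelled by the sign change in the branch of $w$ under conjugation, yielding the relation with a plus (rather than a minus) sign.
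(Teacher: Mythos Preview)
Your reflection relation $\overline{I_{\mathbf{a}}(A)}=I_{\mathbf{a}}(\overline{A})$ is correct, and indeed the paper uses the equivalent identity (in the $J_{\mathbf{a}}$ variable) as its starting point. The genuine gap is the passage from \emph{local} injectivity to the conclusion $A=\overline{A}$. Knowing that $dI_{\mathbf{a}}/dA=-\tfrac{1}{2}\Omega_{\mathbf{a}}(A)\neq 0$ only tells you that $I_{\mathbf{a}}$ is a local biholomorphism at each nondegenerate $A$; it does not rule out $I_{\mathbf{a}}(A)=I_{\mathbf{a}}(\overline{A})$ for $A$ and $\overline{A}$ lying in different sheets of the inverse. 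The proposition is a global statement about all $A\in\mathbb{C}$, and nothing in your argument controls how far $A$ is from the real axis, so the local inverse-function step does not close.

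The paper circumvents this precisely by working with the \emph{finite difference} rather than the derivative: it writes
\[
I_{\mathbf{a}}(A)-I_{\mathbf{a}}(\overline{A})=(A-\overline{A})\,J,\qquad J=\int_{-\mathbf{a}}\frac{dz}{v_A(z)+v_{\overline{A}}(z)},
\]
and then shows by an explicit contour decomposition (into a real-axis piece, horizontal pieces, and vertical pieces, symmetric under $z\mapsto\overline{z}$) that $J$ has nonzero imaginary part coming from the real-axis portion, hence $J\neq 0$. This is the substantive work that your proposal is missing: one must show not that the derivative is nonzero at each point, but that the \emph{secant} between $A$ and $\overline{A}$ is nonzero. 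Your approach would be valid only after establishing global injectivity of $I_{\mathbf{a}}$ on some domain containing both $A$ and $\overline{A}$, which is at least as hard as the proposition itself.
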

\begin{proof}
Set
$$
J_{\mathbf{a}}(A)=\int_{-\mathbf{a}}\frac 1{z^2} v(A,z) dz
$$
with $v(A,z)=\sqrt{4z^3+Az^2 -1}=-i w(A,-z)$. Since $I_{\mathbf{a}}(A)= -i
J_{\mathbf{a}}(A),$ the supposition means $J_{\mathbf{a}}(A) \in i \mathbb{R}.$
In this proof, to simplify the description, we write $v(A,z)=v_A(z)$, 
$v(\overline{A},z)=v_{\overline{A}}(z)$ and $v(A,z)\pm v(\overline{A},z)
=(v_A\pm v_{\overline{A}})(z)$. 
Then
$$
0=J_{\mathbf{a}}(A)+\overline{J_{\mathbf{a}}(A)}
=J_{\mathbf{a}}(A)+ J_{\overline{\mathbf{a}}}(\overline{A})
=J_{\mathbf{a}}(A)- J_{\mathbf{a}}(\overline{A})
=(A-\overline{A}) \int_{-\mathbf{a}}\frac{dz}{(v_A+v_{\overline{A}})(z)}.
$$
The polynomials $v_A(z)^2$ and $v_{\overline{A}}(z)^2$ have the roots $-z_0,$
$-z_1,$ $-z_2$, and $-\overline{z_0},$ $-\overline{z_1},$ $-\overline{z_2}$, 
respectively. The algebraic functions $(v_A \pm v_{\overline{A}})(z)$ may 
be considered on the two-sheeted Riemann surface glued along the cuts 
$[-z_0, {-z_1}]$,
$[-\overline{z_0}, -\overline{z_1}]$, $[-z_2, -\overline{z_2}]\cup 
[- \infty, -\re z_2]$ (cf. Figure \ref{cycle3}).
The cycle $-\mathbf{a}$ may be supposed to enclose both cuts $[-z_0,-z_1]$,
$[-\overline{z_0},-\overline{z_1}],$ and the cycles as in Figures \ref{cycle3}
(a.1) and (a.2) may be deformed into contours consisting of horizontal and
vertical lines and enclosing the cuts $[-z_2,-\overline{z_2}]\cup 
[-\infty, -\re z_2]$ clockwise as in Figures (a$^*$.1) and (a$^*$.2), 
respectively.  
Possible extension of this contour is caused by further movement of $-z_0,$ 
$-z_1$ and $-z_2$, and is given by adding horizontal and vertical lines located
in the symmetric position with respect to the real axis. 
To show $A\in \mathbb{R}$ it is sufficient to verify that, under the supposition
$A-\overline{A}\not=0,$
$$
J=\int_{-\mathbf{a}} \frac{dz}{(v_A + v_{\overline{A}})(z)} \not=0.
$$
Let us compute this integral along the contour $-\mathbf{a}$, say as in Figure
\ref{cycle3} (a$^*$.2) with vertices $\alpha \pm i\beta,$ $\gamma \pm i\beta$ 
such that $-z_2,-\overline{z_2}=\gamma \pm i\beta,$ in which 
$\alpha \le \gamma,$ $\beta\ge 0,$
and $\alpha$ may be supposed to be $\alpha < 0.$
{\small
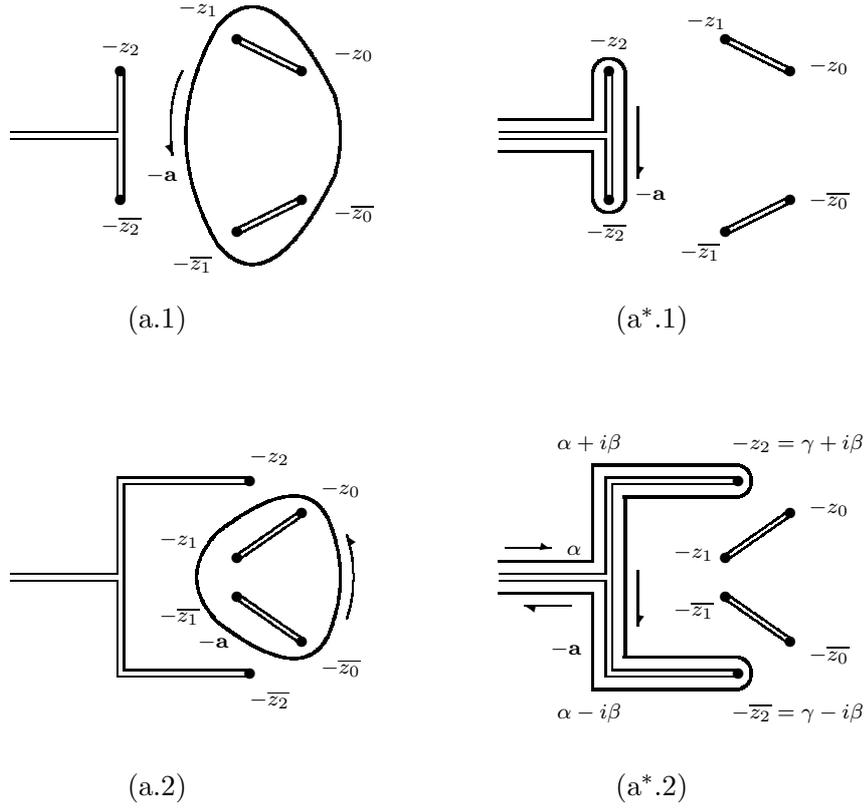
\begin{figure}[htb]
\begin{center}
\unitlength=0.85mm
\begin{picture}(60,60)(-30,-36)
\put(15,10){\circle*{1.5}}
\put(15,-10){\circle*{1.5}}
\put(5,15){\circle*{1.5}}
\put(5,-15){\circle*{1.5}}
\put(-13,10){\circle*{1.5}}
\put(-13,-10){\circle*{1.5}}
\put(-5,-3){\vector(1,-3){0}}

{\tiny
\put(-4,19){\makebox{$-z_{1}$}}
\put(20,12){\makebox{$-z_{0}$}}
\put(-16,13){\makebox{$-z_{2}$}}
\put(-5,-21){\makebox{$-\overline{z_{1}}$}}
\put(20,-13){\makebox{$-\overline{z_{0}}$}}
\put(-16,-15){\makebox{$-\overline{z_{2}}$}}
\put(-9,-7){\makebox{$-\mathbf{a}$}}
}

\qbezier (-12.5,10) (-12.5,0) (-12.5,-10)
\qbezier (-13.5,10) (-13.5,5.5) (-13.5,0.5)
\qbezier (-13.5,-10) (-13.5,-5.5) (-13.5,-0.5)
\qbezier (-13.5,0.5) (-20,0.5) (-30,0.5)
\qbezier (-13.5,-0.5) (-20,-0.5) (-30,-0.5)

\qbezier (15,10.5) (10,13) (5,15.5)
\qbezier (15,9.5) (10,12) (5,14.5)
\qbezier (15,-10.5) (10,-13) (5,-15.5)
\qbezier (15,-9.5) (10,-12) (5,-14.5)

\qbezier (-5,-3) (-6,5) (-3.5,10)

\thicklines
\qbezier (20,6.5) (10.0,26.6) (2,17)
\qbezier (20,-6.5) (10.0,-26.6) (2,-17)

\qbezier (20,-6.5) (22,0) (20,6.5)

\qbezier (2,17) (-7.8,0) (2, -17)

\put(-12,-30){\makebox{(a.1) }}
\end{picture}
\qquad
\begin{picture}(70,60)(-35,-36)

\put(15,10){\circle*{1.5}}
\put(15,-10){\circle*{1.5}}
\put(5,15){\circle*{1.5}}
\put(5,-15){\circle*{1.5}}
\put(-13,10){\circle*{1.5}}
\put(-13,-10){\circle*{1.5}}
\put(-8.5,-6.5){\vector(-1,-4){0}}

{\tiny
\put(-1,17){\makebox{$-z_{1}$}}
\put(18,10){\makebox{$-z_{0}$}}
\put(-16,14){\makebox{$-z_{2}$}}
\put(-2,-19){\makebox{$-\overline{z_{1}}$}}
\put(18,-11){\makebox{$-\overline{z_{0}}$}}
\put(-16.5,-16){\makebox{$-\overline{z_{2}}$}}
\put(-9,-10){\makebox{$-\mathbf{a}$}}
}

\qbezier (-12.5,10) (-12.5,0) (-12.5,-10)
\qbezier (-13.5,10) (-13.5,5.5) (-13.5,0.5)
\qbezier (-13.5,-10) (-13.5,-5.5) (-13.5,-0.5)
\qbezier (-13.5,0.5) (-20,0.5) (-30,0.5)
\qbezier (-13.5,-0.5) (-20,-0.5) (-30,-0.5)

\qbezier (15,10.5) (10,13) (5,15.5)
\qbezier (15,9.5) (10,12) (5,14.5)
\qbezier (15,-10.5) (10,-13) (5,-15.5)
\qbezier (15,-9.5) (10,-12) (5,-14.5)

\qbezier (-8.5,-6.5) (-8.5,-1) (-8.5,4.5)

\thicklines

\qbezier (-10.5, 10) (-10.5,0) (-10.5,-10)
\qbezier (-15.5,10) (-15.5,5.5) (-15.5,2.5)
\qbezier (-15.5,-10) (-15.5,-5.5) (-15.5,-2.5)
\qbezier (-15.5,2.5) (-20,2.5) (-30,2.5)
\qbezier (-15.5,-2.5) (-20,-2.5) (-30,-2.5)

\qbezier (-10.5, 10) (-11,12) (-13,12)
\qbezier (-15.5, 10) (-15,12) (-13,12)

\qbezier (-10.5, -10) (-11,-12) (-13,-12)
\qbezier (-15.5, -10) (-15,-12) (-13,-12)

\put(-12,-30){\makebox{$(\mathrm{a}^*.1)$ }}
\end{picture}

\end{center}
\begin{center}
\unitlength=0.85mm
\begin{picture}(60,60)(-30,-30)
\put(15,10){\circle*{1.5}}
\put(15,-10){\circle*{1.5}}
\put(5,3){\circle*{1.5}}
\put(5,-3){\circle*{1.5}}
\put(7,15){\circle*{1.5}}
\put(7,-15){\circle*{1.5}}
\put(22,6.5){\vector(-2,3){0}}

{\tiny
 \put(-7,5){\makebox{$-z_{1}$}}
\put(18,13){\makebox{$-z_{0}$}}
\put(7,18){\makebox{$-z_{2}$}}
 \put(-7,-7){\makebox{$-\overline{z_{1}}$}}
\put(18,-15){\makebox{$-\overline{z_{0}}$}}
\put(7,-20){\makebox{$-\overline{z_{2}}$}}
\put(-1,-11){\makebox{$-\mathbf{a}$}}
}

\qbezier (7,15.5) (-12.5,15.5) (-13.5,15.5)
\qbezier (7,14.5) (-12.5,14.5) (-12.5,14.5)

\qbezier (7,-15.5) (-12.5,-15.5) (-13.5,-15.5)
\qbezier (7,-14.5) (-12.5,-14.5) (-12.5,-14.5)

\qbezier (-12.5,14.5) (-12.5,0) (-12.5,-14.5)
\qbezier (-13.5,15.5) (-13.5,5.5) (-13.5,0.5)
\qbezier (-13.5,-15.5) (-13.5,-5.5) (-13.5,-0.5)
\qbezier (-13.5,0.5) (-20,0.5) (-30,0.5)
\qbezier (-13.5,-0.5) (-20,-0.5) (-30,-0.5)

\qbezier (15,10.5) (10,7) (5,3.5)
\qbezier (15,9.5) (10,6) (5,2.5)
\qbezier (15,-10.5) (10,-7) (5,-3.5)
\qbezier (15,-9.5) (10,-6) (5,-2.5)

\qbezier (22,-6.5) (24,0) (22,6.5)

\thicklines
\qbezier (20,6.5) (17.0,18.6) (2,7)
\qbezier (20,-6.5) (17.0,-18.6) (2,-7)

\qbezier (20,-6.5) (22,0) (20,6.5)

\qbezier (2,7) (-4.5,0) (2, -7)

\put(-12,-34){\makebox{(a.2) }}
\end{picture}
\qquad
\begin{picture}(70,60)(-35,-30)

\put(15,10){\circle*{1.5}}
\put(15,-10){\circle*{1.5}}
\put(5,3){\circle*{1.5}}
\put(5,-3){\circle*{1.5}}
\put(7,15){\circle*{1.5}}
\put(7,-15){\circle*{1.5}}
\put(-8.5,-7.5){\vector(-1,-4){0}}
\put(-26,-4.5){\vector(-4,1){0}}
\put(-22,4.5){\vector(4,-1){0}}

{\tiny
\put(-3,3){\makebox{$-z_{1}$}}
\put(18,10){\makebox{$-z_{0}$}}
 \put(6,20){\makebox{$-z_{2}=\gamma+i\beta$}}
 \put(-21,20){\makebox{$\alpha+i\beta$}}
\put(-3,-6){\makebox{$-\overline{z_{1}}$}}
\put(18,-13){\makebox{$-\overline{z_{0}}$}}
 \put(6,-22){\makebox{$-\overline{z_{2}}=\gamma-i\beta$}}
 \put(-21,-22){\makebox{$\alpha-i\beta$}}
 \put(-19.5,3.5){\makebox{$\alpha$}}
\put(-22,-12){\makebox{$-\mathbf{a}$}}
}

\qbezier (7,15.5) (-12.5,15.5) (-13.5,15.5)
\qbezier (7,14.5) (-12.5,14.5) (-12.5,14.5)

\qbezier (7,-15.5) (-12.5,-15.5) (-13.5,-15.5)
\qbezier (7,-14.5) (-12.5,-14.5) (-12.5,-14.5)

\qbezier (-12.5,14.5) (-12.5,0) (-12.5,-14.5)
\qbezier (-13.5,15.5) (-13.5,5.5) (-13.5,0.5)
\qbezier (-13.5,-15.5) (-13.5,-5.5) (-13.5,-0.5)
\qbezier (-13.5,0.5) (-20,0.5) (-30,0.5)
\qbezier (-13.5,-0.5) (-20,-0.5) (-30,-0.5)

\qbezier (15,10.5) (10,7) (5,3.5)
\qbezier (15,9.5) (10,6) (5,2.5)
\qbezier (15,-10.5) (10,-7) (5,-3.5)
\qbezier (15,-9.5) (10,-6) (5,-2.5)

\qbezier (-8.5,-7.5) (-8.5,-5) (-8.5,1)

\qbezier (-19,-4.5) (-20,-4.5) (-26,-4.5)
\qbezier (-22,4.5) (-22,4.5) (-29,4.5)

\thicklines

\qbezier (-10.5,12.5)(-10.5,0) (-10.5,-12.5)
\qbezier (-15.5,17.5) (-15.5,5.5) (-15.5,2.5)
\qbezier (-15.5,-17.5) (-15.5,-5.5) (-15.5,-2.5)
\qbezier (-15.5,2.5) (-20,2.5) (-30,2.5)
\qbezier (-15.5,-2.5) (-20,-2.5) (-30,-2.5)

\qbezier (7,17.5) (-12.5,17.5) (-15.5,17.5)
\qbezier (7,12.5) (-12.5,12.5) (-10.5,12.5)

\qbezier (7,-17.5) (-12.5,-17.5) (-15.5,-17.5)
\qbezier (7,-12.5) (-12.5,-12.5) (-10.5,-12.5)

\qbezier (7, 17.5) (9,17) (9,15)
\qbezier (9, 15) (9,13) (7,12.5)

\qbezier (7, -17.5) (9,-17) (9,-15)
\qbezier (9, -15) (9,-13) (7,-12.5)

\put(-12,-34){\makebox{$(\mathrm{a}^*.2)$ }}
\end{picture}

\end{center}

\caption{Modification of the cycle $-\mathbf{a}$}
\label{cycle3}
\end{figure}
}
The integral $J$ is decomposed into three parts: $J=J_0 +J_{\mathrm{hor}}
+J_{\mathrm{ver}}$ with the real line part
$$
J_0= 2\int^{\alpha}_{-\infty} \frac{dz}{(v_A+v_{\overline{A}})(z)},
$$
the horizontal part
\begin{align*}
J_{\mathrm{hor}}&=J_{\mathrm{hor}}^+ + J_{\mathrm{hor}}^-,
\\
J_{\mathrm{hor}}^+ &= \int_{\alpha}^{\gamma}\frac{ds}{(v_A+v_{\overline{A}})
(s+i\beta)}
+ \int^{\alpha}_{\gamma}\frac{ds}{(-v_A+v_{\overline{A}})(s+i\beta)},
\\
J_{\mathrm{hor}}^- &= \int_{\alpha}^{\gamma}\frac{ds}{(-v_A+v_{\overline{A}})
(s-i\beta)}
+ \int^{\alpha}_{\gamma}\frac{ds}{(-v_A-v_{\overline{A}})(s-i\beta)},
\end{align*}
and the vertical part
\begin{align*}
J_{\mathrm{ver}}&=J_{\mathrm{ver}}^+ + J_{\mathrm{ver}}^-,
\\
J_{\mathrm{ver}}^+ &= \int^{\beta}_{0}\frac{i dt}{(v_A+v_{\overline{A}})
(\alpha+it)}
+ \int^{0}_{\beta}\frac{i dt}{(-v_A+v_{\overline{A}})(\alpha+it)},
\\
J_{\mathrm{ver}}^- &= \int_{0}^{-\beta}\frac{idt}{(-v_A+v_{\overline{A}})
(\alpha+it)}
+ \int^{0}_{-\beta}\frac{idt}{(-v_A-v_{\overline{A}})(\alpha+it)}.
\end{align*}
Then we have
$$
J_{\mathrm{hor}}=\frac 2{A-\overline{A}} \Bigl( \int^{\gamma}_{\alpha}
\frac{v_{{A}}(s+i\beta)}{(s+i\beta)^2}ds
- \int^{\gamma}_{\alpha}
\frac{v_{\overline{A}}(s-i\beta)}{(s-i\beta)^2}ds \Bigr) \in \mathbb{R}
$$
and
$$
J_{\mathrm{ver}}=\frac{2i}{A-\overline{A}} \Bigl(\int^{\beta}_0
\frac{v_A(\alpha+it)}{(\alpha+it)^2}dt
+ \int^{\beta}_0 \frac{v_{\overline{A}}(\alpha-it)}{(\alpha-it)^2} dt \Bigr)
\in \mathbb{R},
$$
and hence $J_{\mathrm{hor}}+J_{\mathrm{ver}} \in  \mathbb{R}.$
Furthermore, observing, for $-t=x-\alpha,$ $t\ge 0,$ $\alpha <0,$
\begin{align*}
v_{A}(x)&=(-4(t-\alpha)^3 +(\re A+i\im A)(t-\alpha)^2 -1)^{1/2}=i(g(t)-i h(t))
^{1/2},
\\
g(t)&= 4(t-\alpha)^3 -\re A \cdot (t-\alpha)^2+1, 
\quad h(t)=\im A\cdot (t-\alpha)^2,
\end{align*} 
we have
\begin{equation*}
\frac 12 J_0 = \int^{\alpha}_{-\infty}\frac{dx}{(v_A +v_{\overline
{A}})(x)}
= - \frac{i}{\sqrt{2}} \int^{\infty}_{0}\frac{dt}
{\sqrt{\smash[b]{g(t)+
\sqrt{\smash[b]{g(t)^2+h(t)^2}}} }} \in i \mathbb{R}\setminus \{0\},
\end{equation*}
which implies $J\not=0$ under the supposition $A-\overline{A}\not=0.$
In the case where extension by horizontal or vertical lines occurs,
the contributions from these parts to $J$ are integrals analogous
to $J_{\mathrm{hor}}$ or $J_{\mathrm{ver}}$, and $J\not=0$ are similarly
shown.
\end{proof}
Let us examine $I_{\mathbf{a}}(A)$ for $A\in \mathbb{R}$. It is easy to see that,
$w(A,z)^2$ has real roots $z_2<z_1<z_0$ if $A> 3\cdot 2^{2/3}.$ Then $I_{\mathbf
{a}}(A)\in i \mathbb{R}\setminus \{0\}.$ For $A=3\cdot 2^{2/3}$ we have 
$z_2<z_1 =z_0=2^{-1/3},$ and then $I_{\mathbf{a}}(3\cdot 2^{2/3})=0.$ 
\par
Suppose that $A<3\cdot 2^{2/3}.$ The roots of $w(A,z)^2$ are $\alpha\pm i\beta$
and $z_2$ with $\alpha,$ $\beta,$ $z_2 \in \mathbb{R},$ and $\mathbf{a}$ is
a cycle enclosing the cut $[\alpha-i\beta, \alpha+i\beta]$. Then $I_{\mathbf{a}}
(A) \in i\mathbb{R},$ since $\overline{I_{\mathbf{a}}(A)} =-I_{\mathbf{a}}(A),$
and the integral
\begin{equation*}
I_{\mathbf{a}}(A) = 2i \int^{\beta}_{-\beta} \frac{w(A, \alpha+it)}{(\alpha+
it)^2} dt =
 4i \int^{\beta}_0 \re \frac{w(A,\alpha+it)}{(\alpha+it)^2} dt
\end{equation*}
satisfies, for $A<3 \cdot 2^{2/3},$
$$
\frac{\partial}{\partial A} \Bigl(\frac 1i I_{\mathbf{a}}(A) \Bigr)
= 2\int^{\beta}_0 \re w(A, \alpha+it)^{-1} dt =\sqrt{2} \int^{\beta}_0
\frac{\sqrt{\smash[b]{{g}_* +\sqrt{\smash[b]{{g}_*^2+{h}_*^2}}}}}{\sqrt{{g}_*^2+{h}_*^2}} dt >0,
$$
where
\begin{align*}
g_*={g}_*(t)&=\re w(A,\alpha+it)^2 =4(\alpha^3-3\alpha t^2)-A(\alpha^2-t^2)+1,
\\
h_*={h}_*(t)&=\im w(A,\alpha+it)^2 =4(-t^3+3\alpha^2 t)-2A\alpha t.
\end{align*}
This implies $I_{\mathbf{a}}(A) \in i \mathbb{R} \setminus \{0\}$ for $A < 3
\cdot 2^{2/3}.$
\par
The fact above combined with Proposition \ref{prop8.1} leads us to the
following.
\begin{prop}\label{prop8.2}
If $\phi=0$, then the Boutroux equations $(\mathrm{BE})_0$ admit a unique
solution $A_0= 3\cdot 2^{2/3}.$ 
\end{prop}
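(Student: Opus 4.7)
The plan is to piece together the proposition as an immediate consequence of the analysis carried out in the paragraph preceding its statement, plus the direct computation of Example \ref{exa8.1}. Concretely, I would argue as follows.

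First, suppose $A_0$ satisfies $(\mathrm{BE})_0$. The first equation $\im I_{\mathbf{a}}(A_0)=0$ puts us in the hypothesis of Proposition \ref{prop8.1}, and therefore $A_0 \in \mathbb{R}$. This reduces the problem to a real one-parameter search.

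Next I would combine the three cases of the preceding case analysis for real $A$ into a single statement: the function
\[
\Phi(A) := \frac{1}{i}\,I_{\mathbf{a}}(A), \qquad A \in \mathbb{R},
\]
is real-valued and vanishes if and only if $A = 3\cdot 2^{2/3}$. Indeed, for $A > 3\cdot 2^{2/3}$ the polynomial $4z^3-Az^2+1$ has three real roots $z_2<z_1<z_0$ and the cycle $\mathbf{a}$ encloses the real cut $[z_1,z_0]$, so direct substitution gives $\Phi(A)\ne 0$; for $A<3\cdot 2^{2/3}$ the discussion above Proposition \ref{prop8.2} shows $\tfrac{d}{dA}\Phi(A)>0$ via the explicit representation using $g_*(t),h_*(t)$, while $\Phi(3\cdot 2^{2/3})=0$ by degeneration of the cycle, so $\Phi(A)\ne 0$ here as well. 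Hence the first Boutroux equation forces $A_0=3\cdot 2^{2/3}$.

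Finally, it remains to verify that this candidate also satisfies the second equation $\im I_{\mathbf{b}}(A_0)=0$. This is precisely the content of Example \ref{exa8.1}, which computes
\[
I_{\mathbf{b}}(3\cdot 2^{2/3}) = -2^{4/3}3^{3/2} \in \mathbb{R},
\]
so $\im I_{\mathbf{b}}(A_0)=0$ automatically. Existence and uniqueness follow.

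There is no genuine obstacle here, since every ingredient has been set up. The only mild subtlety is the bookkeeping at the degenerate value $A=3\cdot 2^{2/3}$: the $\mathbf{a}$-cycle shrinks to a point there, so one must read $I_{\mathbf{a}}(3\cdot 2^{2/3})=0$ as the limit of $I_{\mathbf{a}}(A)$ along real $A$ (on either side), which is consistent with the monotonicity from below and the explicit formula from above. Once that limiting statement is made, the proof is a three-line combination of Proposition \ref{prop8.1}, the monotonicity of $\Phi$, and Example \ref{exa8.1}.
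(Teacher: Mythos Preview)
Your argument is correct and is exactly the paper's approach: the paper's entire proof is the single sentence ``The fact above combined with Proposition \ref{prop8.1} leads us to the following,'' where ``the fact above'' is precisely the case analysis showing $I_{\mathbf{a}}(A)\in i\mathbb{R}\setminus\{0\}$ for real $A\ne 3\cdot 2^{2/3}$, and Example \ref{exa8.1} supplies both $I_{\mathbf{a}}(3\cdot 2^{2/3})=0$ and $I_{\mathbf{b}}(3\cdot 2^{2/3})\in\mathbb{R}$. Your remark about interpreting $I_{\mathbf{a}}$ at the degenerate value as a limit is a sensible clarification that the paper leaves implicit.
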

\begin{cor}\label{cor8.3}
For every $A\in \mathbb{C}$, $(I_{\mathbf{a}}(A), I_{\mathbf{b}}(A))\not=(0,0).$
\end{cor}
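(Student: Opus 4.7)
The proof plan is a short contradiction argument that chains together Proposition \ref{prop8.2} and Example \ref{exa8.1}. Suppose some $A\in\mathbb{C}$ satisfied $I_{\mathbf{a}}(A) = I_{\mathbf{b}}(A) = 0$. Then trivially $\im I_{\mathbf{a}}(A) = \im I_{\mathbf{b}}(A) = 0$, so $A$ would solve the Boutroux equations $(\mathrm{BE})_0$ with $\phi=0$. By Proposition \ref{prop8.2}, this forces $A=A_0=3\cdot 2^{2/3}$. But Example \ref{exa8.1} has already computed $I_{\mathbf{b}}(A_0) = -2^{4/3}3^{3/2}\neq 0$, contradicting $I_{\mathbf{b}}(A) = 0$.

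To cover every $A\in\mathbb{C}$, one must also dispose of the two remaining degeneration values $A = 3\cdot 2^{2/3} e^{\pm 2\pi i/3}$, at which $\Pi_A$ is singular and Proposition \ref{prop8.2} does not literally apply. The plan here is to use the scaling substitution $z = e^{\pm 2\pi i/3}\zeta$: a direct check gives
\[
4z^3 - 3\cdot 2^{2/3} e^{\pm 2\pi i/3} z^2 + 1 \;=\; 4\zeta^3 - 3\cdot 2^{2/3}\zeta^2 + 1,
\]
so that $(w(A,z)/z^2)\,dz = e^{\mp 2\pi i/3}\,(w(A_0,\zeta)/\zeta^2)\,d\zeta$. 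Consequently $I_{\mathbf{b}}$ at these degenerate $A$-values equals $e^{\mp 2\pi i/3} I_{\mathbf{b}}(A_0)$ along the transported cycle, which is again nonzero by Example \ref{exa8.1}. This closes the degenerate cases.

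The argument uses nothing beyond what is already in hand, and the only subtlety I expect is the bookkeeping around the degeneration points: Proposition \ref{prop8.1} (and hence Proposition \ref{prop8.2}) assumes the cycles $\mathbf{a}, \mathbf{b}$ are well defined on a nonsingular $\Pi_A$, whereas the statement of the corollary ranges over all $A\in\mathbb{C}$. The scaling symmetry cleanly reduces the three degenerate points to the single computation in Example \ref{exa8.1}, and no further case analysis is required.
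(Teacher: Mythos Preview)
Your argument is correct and follows essentially the same route as the paper. The paper assumes only $I_{\mathbf{a}}(A)=0$, invokes Proposition~\ref{prop8.1} to force $A\in\mathbb{R}$, and then uses Proposition~\ref{prop8.2} together with Example~\ref{exa8.1} to conclude $A=3\cdot 2^{2/3}$ and $I_{\mathbf{b}}(A)\neq 0$; you instead use both vanishing conditions at once to land directly on $(\mathrm{BE})_0$ and then cite Proposition~\ref{prop8.2}, which is a harmless rephrasing of the same chain of implications.

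Your separate treatment of the non-real degeneration points $3\cdot 2^{2/3}e^{\pm 2\pi i/3}$ via the scaling $z=e^{\pm 2\pi i/3}\zeta$ is additional care that the paper does not spell out: the paper's own proof simply applies Proposition~\ref{prop8.1} without singling out these values. Your scaling argument is sound and does reduce those cases to Example~\ref{exa8.1}, so nothing is lost by including it; it is just more explicit than the paper chose to be.
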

\begin{proof}
If $I_{\mathbf{a}}(A)=0,$ then $A\in  \mathbb{R}$ by Proposition \ref{prop8.1}.
By Proposition \ref{prop8.2} and Example \ref{exa8.1}, $A=3\cdot 2^{2/3}$ and 
$I_{\mathbf{b}}(A)\not=0.$
\end{proof}
\begin{prop}\label{prop8.4}
Suppose that, for $A_{\phi}$ solving $(\mathrm{BE})_{\phi}$ with $0<|\phi|\le
\pi/3$, the elliptic curve $\Pi_{A_{\phi}}$ degenerates. Then $\phi=\pi/3$ or
$-\pi/3$ and $A_{\pm \pi/3}=3\cdot 2^{2/3} e^{\mp 2\pi i/3}.$
\end{prop}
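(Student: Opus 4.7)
The plan is to show that the only possible degenerate values of $A$, together with the range $0<|\phi|\le\pi/3$ and the Boutroux equations, force $\phi=\pm\pi/3$ with the specified values of $A_{\pm\pi/3}$.

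First I would recall from the beginning of Section \ref{sc8} that $w(A,z)^2=4z^3-Az^2+1$ has a double root if and only if $A\in\{3\cdot 2^{2/3},\, 3\cdot 2^{2/3}e^{2\pi i/3},\, 3\cdot 2^{2/3}e^{-2\pi i/3}\}$, so degeneration of $\Pi_{A_\phi}$ amounts to $A_\phi$ being one of these three values. In each case, one of the two basic cycles collapses to a point and the Boutroux equations reduce to a single nontrivial condition on the surviving cycle.

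Next I would eliminate the case $A_\phi=3\cdot 2^{2/3}$. At this value the cycle $\mathbf{a}$ is the vanishing one, so $I_{\mathbf{a}}(3\cdot 2^{2/3})=0$ trivially, and the remaining Boutroux equation reduces to $\im e^{i\phi}I_{\mathbf{b}}(3\cdot 2^{2/3})=0$. By Example \ref{exa8.1}, $I_{\mathbf{b}}(3\cdot 2^{2/3})=-2^{4/3}3^{3/2}\in\mathbb{R}\setminus\{0\}$, hence $\sin\phi=0$. Under the restriction $0<|\phi|\le\pi/3$ this is impossible, so $A_\phi\ne 3\cdot 2^{2/3}$.

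For the remaining two cases I would perform the substitution $z=e^{\pm 2\pi i/3}\zeta$ in the integral $I_{\mathbf{c}}(A)$. A direct check gives $w(3\cdot 2^{2/3}e^{\pm 2\pi i/3},e^{\pm 2\pi i/3}\zeta)^2=w(3\cdot 2^{2/3},\zeta)^2$, so up to branch choice
\begin{equation*}
\int_{\mathbf{c}}\frac{w(3\cdot 2^{2/3}e^{\pm 2\pi i/3},z)}{z^2}\,dz
=\pm e^{\mp 2\pi i/3}\int_{\mathbf{c}'}\frac{w(3\cdot 2^{2/3},\zeta)}{\zeta^2}\,d\zeta,
\end{equation*}
where $\mathbf{c}'$ is the preimage cycle on $\Pi_{3\cdot 2^{2/3}}$. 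The surviving cycle on the degenerate curve maps to the surviving cycle at $A=3\cdot 2^{2/3}$, which by Example \ref{exa8.1} gives a nonzero real value. Consequently the Boutroux equation $\im e^{i\phi}I_{\mathbf{b}}(A_\phi)=0$ becomes $\im(e^{i(\phi\mp 2\pi/3)})=0$, i.e.\ $\phi\equiv\pm 2\pi/3\pmod\pi$. Intersecting with $0<|\phi|\le\pi/3$ yields exactly $\phi=\pi/3$ paired with $A_\phi=3\cdot 2^{2/3}e^{-2\pi i/3}$, and $\phi=-\pi/3$ paired with $A_\phi=3\cdot 2^{2/3}e^{2\pi i/3}$ (the pairing is fixed by the sign of the resulting $e^{i(\phi\mp 2\pi/3)}\in\mathbb{R}$ and is consistent with the symmetry $A_{-\phi}=\overline{A_\phi}$).

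The main obstacle I anticipate is bookkeeping: tracking which of the basic cycles $\mathbf{a},\mathbf{b}$ collapses at each degenerate $A$, identifying the surviving homology class as a vanishing cycle on the nodal curve, and verifying that under the substitution $z=e^{\pm 2\pi i/3}\zeta$ it is sent to the cycle that produces the nonzero integral from Example \ref{exa8.1}. The actual sign/branch tracking is routine once this cycle identification is pinned down.
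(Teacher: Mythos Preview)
Your proposal is correct and follows essentially the same route as the paper: identify the three degenerate values of $A$, use the substitution $z=e^{\pm 2\pi i/3}\zeta$ to reduce the surviving integral to the real nonzero value at $A=3\cdot 2^{2/3}$ from Example~\ref{exa8.1}, and read off $\phi=\mp\pi/3$. The paper's proof is slightly terser---it writes the surviving integral explicitly as $\int_{z_0}^{z_2}$ rather than naming the cycle, which sidesteps the bookkeeping you flag, and it leaves the case $A_\phi=3\cdot 2^{2/3}$ implicit---but the substance is the same.
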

\begin{proof}
When $\Pi_{A_{\phi}}$ degenerates, $A_{\phi}= 3\cdot 2^{2/3} e^{2k \pi i/3},$
$k=0, \pm1.$ Suppose that $A_{\phi}=3\cdot 2^{2/3} e^{2\pi i/3},$ and that the
roots of $w(A_{\phi},z)^2$ are $z_0=z_1$ and $z_2\not=z_0, z_1$. Then
$$
\mathbb{R}\ni e^{i\phi} \int^{z_2}_{z_0} \frac 1{z^2} \sqrt{4z^3-A_{\phi}z^2 
+1}\, dz= e^{i(\phi-2\pi/3)} \int^{\zeta_2}_{\zeta_0} \frac{1}{\zeta^2}
\sqrt{4\zeta^3-3\cdot 2^{2/3} \zeta^2 +1}\, d\zeta \not=0
$$
with $\zeta_{0,2}=z_{0,2} e^{-2\pi i/3}\in \{ 2^{-1/3},-4^{-2/3}\},$ 
which implies $\phi=-\pi/3.$ 
Similarly, if $A_{\phi}=3\cdot 2^{2/3} e^{-2\pi i/3},$ then $\phi=\pi/3.$
\end{proof}
\begin{prop}\label{prop8.5}
If $\phi=\pm \pi/3,$ then the Boutroux equations $(\mathrm{BE})_{\pm \pi/3}$ 
admit a unique solution $A_{\pm \pi/3}=3\cdot 2^{2/3}e^{\mp 2\pi i/3}.$
\end{prop}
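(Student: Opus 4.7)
The strategy is to reduce the proposition to Proposition~\ref{prop8.2} via the $\mathbb{Z}/3$ symmetry of the cubic $4z^3 - Az^2 + 1$. Concretely, under the rotation $z = e^{-2\pi i/3}\zeta$ combined with $A = e^{-2\pi i/3}\tilde{A}$ one has the identity
\[
4z^3 - A z^2 + 1 = 4\zeta^3 - \tilde{A}\,\zeta^2 + 1,
\]
so that $w(A,z)^2 = w(\tilde{A},\zeta)^2$; together with $dz = e^{-2\pi i/3}d\zeta$ and $z^2 = e^{-4\pi i/3}\zeta^2$ this yields, for every cycle $\mathbf{c}$ on $\Pi_A$ with image $\mathbf{c}'$ on $\Pi_{\tilde{A}}$,
\[
e^{i\pi/3}\int_{\mathbf{c}} \frac{w(A,z)}{z^2}\, dz = \varepsilon \int_{\mathbf{c}'} \frac{w(\tilde{A},\zeta)}{\zeta^2}\, d\zeta,
\]
where $\varepsilon \in \{+1,-1\}$ is a fixed sign depending only on the choice of square-root branches.

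\textbf{Existence at $\phi = \pi/3$.} Take $A_{\pi/3} := 3\cdot 2^{2/3}e^{-2\pi i/3}$, so that $\tilde{A} = 3\cdot 2^{2/3}$. By Example~\ref{exa8.1}, $I_{\mathbf{a}}(3\cdot 2^{2/3}) = 0$ and $I_{\mathbf{b}}(3\cdot 2^{2/3}) = -2^{4/3}3^{3/2}$ are both real, so the right-hand side of the displayed identity is real for each of the generating cycles, and hence for every element of $H_1(\Pi_{\tilde{A}})$. Taking imaginary parts gives $(\mathrm{BE})_{\pi/3}$.

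\textbf{Uniqueness.} Let $A_{\pi/3}$ be any solution of $(\mathrm{BE})_{\pi/3}$ and set $\tilde{A} := e^{2\pi i/3}A_{\pi/3}$. The rotation $\zeta = e^{2\pi i/3}z$ induces a bijection between cycles on $\Pi_{A_{\pi/3}}$ and cycles on $\Pi_{\tilde{A}}$, and the identity above forces
\[
\im \int_{\mathbf{c}'} \frac{w(\tilde{A},\zeta)}{\zeta^2}\, d\zeta = 0
\]
for every cycle $\mathbf{c}'$ on $\Pi_{\tilde{A}}$. Thus $\tilde{A}$ solves $(\mathrm{BE})_0$, and Proposition~\ref{prop8.2} (which applies to all $\tilde{A}\in\mathbb{C}$, degenerate or not) gives $\tilde{A} = 3\cdot 2^{2/3}$, whence $A_{\pi/3} = 3\cdot 2^{2/3}e^{-2\pi i/3}$. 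The case $\phi = -\pi/3$ is obtained by the same argument with $z = e^{2\pi i/3}\zeta$, or equivalently by invoking the symmetry $A_{-\phi} = \overline{A_{\phi}}$, which follows from $\overline{I_{\mathbf{a},\,\mathbf{b}}(A)} = \pm I_{\mathbf{a},\,\mathbf{b}}(\overline{A})$ (cf. the proof of Proposition~\ref{prop8.1}).

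\textbf{Main obstacle.} The only non-routine step is pinning down the sign $\varepsilon$ and the bijection of cycles: the branch of $w(A,z) = 2\sqrt{z-z_0}\sqrt{z-z_1}\sqrt{z-z_2}$ was fixed in Section~\ref{ssc2.2} by the asymptotic condition $\re\sqrt{z-z_j}\to +\infty$ as $z\to+\infty$ on $\Pi_+$, and this condition is not preserved under the $120^\circ$-rotation. One must verify that, with the branches prescribed on $\Pi_{A_{\pi/3}}$ and $\Pi_{3\cdot 2^{2/3}}$, the quotient $w(A_{\pi/3},z)/w(3\cdot 2^{2/3},\zeta)$ is a single constant $\varepsilon = \pm 1$ on both sheets simultaneously, by tracking the rotated cuts $[\infty,z_2]$ and $[z_0,z_1]$. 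Once this bookkeeping is settled, both existence and uniqueness reduce to Proposition~\ref{prop8.2} and Example~\ref{exa8.1}.
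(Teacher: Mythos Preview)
Your proof is correct and follows essentially the same approach as the paper: both exploit the substitution $z = e^{-2\pi i/3}\zeta$, $\tilde{A} = e^{2\pi i/3}A_{\pi/3}$ to transform $(\mathrm{BE})_{\pi/3}$ into $(\mathrm{BE})_0$ (the paper writes the transformed condition as $e^{\pi i}\int_{\mathbf{c}e^{2\pi i/3}}\zeta^{-2}w(\tilde{A},\zeta)\,d\zeta\in\mathbb{R}$) and then invoke Proposition~\ref{prop8.2}. Your ``main obstacle'' about the sign $\varepsilon$ is in fact harmless here, since $\varepsilon\in\{+1,-1\}$ is real and only the vanishing of the imaginary part is at stake; the paper accordingly does not track it.
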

\begin{proof}
For $\phi=\pi/3$, $(\mathrm{BE})_{\pi/3}$ are equivalent to
$$
e^{\pi i/3} \int_{\mathbf{c}} \frac 1{z^2} \sqrt{4z^3-A_{\pi/3} z^2+1} \, dz
\in \mathbb{R}
$$
for every cycle $\mathbf{c}$ on $\Pi_{A_{\pi/3}}$, which is written as
$(\mathrm{BE})_0$ with $\phi=0$
$$
e^{\pi i} \int_{\mathbf{c} e^{2\pi i/3}} \frac 1{\zeta^2} \sqrt{4\zeta^3 
-e^{2\pi i/3}A_{\pi/3} \zeta^2  +1}\, d\zeta \in\mathbb{R}  \quad (z=e^{-2\pi i
/3}\zeta).
$$
Then by Proposition \ref{prop8.2}, $e^{2\pi i/3}A_{\pi/3}=3\cdot 2^{2/3}$
is a unique solution of $(\mathrm{BE})_{\pi/3}.$
\end{proof}
\par
The function $h(A)=I_{\mathbf{a}}(A)/I_{\mathbf{b}}(A)$ 
\cite[Appendix I]{Novokshenov-2} is useful in examining $A_{\phi}$.
\begin{prop}\label{prop8.6}
Suppose that $A\in \mathbb{C}.$ 
\par
$(1)$ If $A$ solves $(\mathrm{BE})_{\phi}$ for some $\phi \in \mathbb{R}$ and
$I_{\mathbf{b}}(A) \not= 0,$ then $h(A) \in \mathbb{R}.$
\par
$(2)$ If $h(A)\in \mathbb{R} \setminus \{0\}$, then, for some $\phi \in \mathbb
{R}$, $A$ solves $(\mathrm{BE})_{\phi}$. 
\end{prop}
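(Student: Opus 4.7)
The proposition is essentially an algebraic restatement of the Boutroux equations in terms of the single quantity $h(A)=I_{\mathbf{a}}(A)/I_{\mathbf{b}}(A)$. The whole argument reduces to observing that $(\mathrm{BE})_\phi$ forces $e^{i\phi}I_{\mathbf{a}}(A)$ and $e^{i\phi}I_{\mathbf{b}}(A)$ to be simultaneously real, and conversely that reality of the ratio allows us to choose $\phi$ so that both become real simultaneously.

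For part (1), I would unfold the hypothesis $(\mathrm{BE})_{\phi}$: by definition
\[
e^{i\phi}I_{\mathbf{a}}(A)\in\mathbb{R},\qquad e^{i\phi}I_{\mathbf{b}}(A)\in\mathbb{R}.
\]
Since $I_{\mathbf{b}}(A)\neq 0$ by assumption, $e^{i\phi}I_{\mathbf{b}}(A)$ is a nonzero real number, and hence
\[
h(A)=\frac{I_{\mathbf{a}}(A)}{I_{\mathbf{b}}(A)}
=\frac{e^{i\phi}I_{\mathbf{a}}(A)}{e^{i\phi}I_{\mathbf{b}}(A)}\in\mathbb{R}.
\]

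For part (2), assume $h(A)\in\mathbb{R}\setminus\{0\}$. Then in particular $I_{\mathbf{b}}(A)\neq 0$ and $I_{\mathbf{a}}(A)\neq 0$. Choose $\phi\in\mathbb{R}$ by
\[
\phi\equiv-\arg I_{\mathbf{b}}(A)\pmod{\pi},
\]
so that $e^{i\phi}I_{\mathbf{b}}(A)\in\mathbb{R}\setminus\{0\}$, which gives $\im e^{i\phi}I_{\mathbf{b}}(A)=0$. Then
\[
e^{i\phi}I_{\mathbf{a}}(A)=h(A)\cdot e^{i\phi}I_{\mathbf{b}}(A),
\]
which is the product of two real numbers and hence lies in $\mathbb{R}$, giving $\im e^{i\phi}I_{\mathbf{a}}(A)=0$. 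Both equations of $(\mathrm{BE})_{\phi}$ are therefore satisfied.

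There is no real obstacle here; the only subtlety is to notice that $h(A)\in\mathbb{R}\setminus\{0\}$ already guarantees $I_{\mathbf{a}}(A),I_{\mathbf{b}}(A)$ are both nonzero, so that the arguments are well defined and $\phi$ can be chosen as above. Corollary \ref{cor8.3} is not even needed for this proposition itself, though it clarifies that $h(A)$ is defined on a nonempty set of generic $A$.
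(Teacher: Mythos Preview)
Your proof is correct and essentially the same as the paper's: both reduce $(\mathrm{BE})_\phi$ to the simultaneous reality of $e^{i\phi}I_{\mathbf{a}}(A)$ and $e^{i\phi}I_{\mathbf{b}}(A)$, and for part (2) pick $\phi$ so that $e^{i\phi}I_{\mathbf{b}}(A)\in\mathbb{R}$ and then use reality of the ratio. The paper writes out real and imaginary parts and sets $v/u=V/U=-\tan\phi$, while you phrase the choice of $\phi$ via $\arg I_{\mathbf{b}}(A)$; these are equivalent formulations of the same elementary step.
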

\begin{proof}
Suppose that $h(A)=\rho \in \mathbb{R},$ and write $I_{\mathbf{a}}(A)=u+iv,$
$I_{\mathbf{b}}(A)=U+iV$ with $u$, $v$, $U,$ $V\in \mathbb{R}$. Then $u=\rho U,$
$v=\rho V,$ and hence $v/u=V/U= -\tan \phi$ for some $\phi \in [-\pi/2,\pi/2].$ 
This implies $\im e^{i\phi}I_{\mathbf{a}}(A)=\im e^{i\phi}I_{\mathbf{b}}(A)=0.$
\end{proof}
\begin{prop}\label{prop8.7}
The set $\{ A\in \mathbb{C} \,|\, \text{$A$ solves $(\mathrm{BE})_{\phi}$ for
some $\phi \in\mathbb{R}$} \}$ is bounded.
\end{prop}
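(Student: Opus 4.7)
The plan is to combine Proposition~\ref{prop8.6}(1) with a matched asymptotic analysis of the period integrals $I_{\mathbf{a}}(A)$ and $I_{\mathbf{b}}(A)$ as $|A|\to\infty$, and then to show that their ratio $h(A)$ cannot be real once $|A|$ is large enough.

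First I apply the antiderivative identity from the proof of Proposition~\ref{prop6.8},
\[
\int \frac{w}{z^2}\,dz = -\frac{180}{A^2}\int w\,dz + \Bigl(\frac{108}{A^2}-A\Bigr)\int\frac{dz}{w} - \frac{w}{z} - \frac{6}{A}w + \frac{72}{A^2}zw,
\]
to each closed cycle $\mathbf{c}$ on $\Pi_A$; the last three terms contribute nothing because $w/z$, $w$ and $zw$ are single-valued on the compactified surface, so that
\[
I_{\mathbf{c}}(A) = -\frac{180}{A^2}J_{\mathbf{c}}(A) + \Bigl(\frac{108}{A^2}-A\Bigr)\Omega_{\mathbf{c}}(A),\qquad J_{\mathbf{c}}(A):=\oint_{\mathbf{c}} w\,dz.
\]
This reduces the problem to estimating the standard periods.

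Next I read off their asymptotics. For large $|A|$ the roots of $4z^3-Az^2+1$ split into two small ones $z_0,z_1\approx\pm A^{-1/2}$ and one large one $z_2\approx A/4$. The rescaling $z=A^{-1/2}\zeta$ turns $w$ into $\sqrt{1-\zeta^2}$ and the $\mathbf{a}$-cycle into the cycle around $[-1,1]$, giving $\Omega_{\mathbf{a}}(A)\sim 2\pi A^{-1/2}$ and $J_{\mathbf{a}}(A)\sim \pi A^{-1/2}$. For the $\mathbf{b}$-periods I use matched asymptotics across three zones---inner $|z|\asymp A^{-1/2}$, intermediate $A^{-1/2}\ll|z|\ll A$ (where $w\approx\pm iA^{1/2}z$ produces a logarithm), and outer $|z|\asymp A/4$ via $z=(A/4)\zeta$---which yield $\Omega_{\mathbf{b}}(A)\asymp iA^{-1/2}\log A$ and $J_{\mathbf{b}}(A)\asymp iA^{5/2}$. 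Substituting these into the identity above,
\[
I_{\mathbf{a}}(A) = -2\pi A^{1/2} + O(A^{-5/2}),\qquad I_{\mathbf{b}}(A) = 3iA^{1/2}\log A + O(A^{1/2}),
\]
so the leading coefficient of $I_{\mathbf{a}}$ is real while that of $I_{\mathbf{b}}$ is purely imaginary. Hence
\[
h(A)=\frac{I_{\mathbf{a}}(A)}{I_{\mathbf{b}}(A)}=\frac{2\pi i}{3\log A}\bigl(1+o(1)\bigr)\qquad(|A|\to\infty).
\]

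To finish, suppose $A$ solves $(\mathrm{BE})_\phi$ for some $\phi\in\mathbb{R}$. If $I_{\mathbf{b}}(A)\neq 0$, Proposition~\ref{prop8.6}(1) forces $h(A)\in\mathbb{R}$. Writing $\log A=\log|A|+i\arg A$ with $|\arg A|<\pi$ and taking imaginary part,
\[
\operatorname{Im} h(A)=\frac{2\pi\log|A|}{9\log^{2}|A|+9\arg^{2}A}\bigl(1+o(1)\bigr)\;\sim\;\frac{2\pi}{3\log|A|}\;>\;0
\]
for all sufficiently large $|A|$, a contradiction. The remaining case $I_{\mathbf{b}}(A)=0$ is automatic: the same asymptotic shows $I_{\mathbf{b}}(A)\neq 0$ once $|A|$ exceeds a threshold, so the zero set of $I_{\mathbf{b}}$ is itself bounded. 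Therefore the full set of $A$ solving $(\mathrm{BE})_\phi$ for some $\phi$ is bounded.

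The main obstacle will be carrying out the matched asymptotics for $\Omega_{\mathbf{b}}$ and $J_{\mathbf{b}}$ uniformly in $\arg A$, and in particular fixing the branch of $w$ on each zone so that the leading coefficient of $I_{\mathbf{b}}(A)$ is firmly purely imaginary; the whole argument rests on the real/imaginary asymmetry between the $\mathbf{a}$- and $\mathbf{b}$-integrals. The Legendre-type relation $\Omega_{\mathbf{a}}J_{\mathbf{b}}-\Omega_{\mathbf{b}}J_{\mathbf{a}}=-\pi iA^{2}/15$ derived in the proof of Proposition~\ref{prop6.8} provides a useful cross-check on the leading constants.
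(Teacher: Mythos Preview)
Your strategy---show $h(A)=I_{\mathbf{a}}(A)/I_{\mathbf{b}}(A)\notin\mathbb{R}$ for large $|A|$ and invoke Proposition~\ref{prop8.6}(1)---is exactly the paper's, and your target asymptotic $h(A)\sim\tfrac{2\pi i}{3\log A}$ matches what the paper obtains. The paper, however, reaches it more directly: it estimates $\int_{z_0}^{z_1}\tfrac{w}{z^2}\,dz$ and $\int_{z_2}^{z_0}\tfrac{w}{z^2}\,dz$ by the single substitutions $z=A^{-1/2}t$ and $4z=At$, arriving at $I_{\mathbf{a}}(A)\asymp A^{1/2}$ and $I_{\mathbf{b}}(A)\asymp iA^{1/2}\log A$ in a few lines without any multi-zone matching. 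Your detour through the antiderivative identity and the separate asymptotics of $\Omega_{\mathbf{a},\mathbf{b}}$, $J_{\mathbf{a},\mathbf{b}}$ is valid and has the virtue of avoiding the formally divergent (but easily regularised as a contour integral) expression $\int_{-1}^{1}t^{-2}\sqrt{1-t^2}\,dt$; it also gives the Legendre cross-check you mention. The cost is precisely what you flag as the main obstacle: the three-zone matching for $\Omega_{\mathbf{b}}$ and $J_{\mathbf{b}}$, which the paper's direct computation sidesteps entirely. (Minor arithmetic: the $9$ in your denominator for $\operatorname{Im} h(A)$ should be $3$; the final $\sim\tfrac{2\pi}{3\log|A|}$ is then correct and the conclusion is unaffected.)
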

\begin{proof}
The roots of $w(A,z)$ are $z_0,$ $z_1 \sim \pm A^{-1/2},$ $z_2\sim
A/4$ if $A$ is large. Then
\begin{align*}
& \int^{z_0}_{z_2} \frac 1{z^2}{w(A,z)} dz \sim
\int^{A^{-1/2}}_{A/4} \frac 1{z^2} \sqrt{4z^3-Az^2+1}\,dz
 \sim  iA^{1/2} \int^{4A^{-3/2}}_1 \frac 1t \sqrt{1-t} \, dt
\\
& \sim i A^{1/2}(2 + \log(2A^{-3/2})) \sim  -\frac {3i}2 A^{1/2} \log A,
\end{align*}
and
$$
 \int^{z_1}_{z_0} \frac 1{z^2}{w(A,z)} dz \sim
\int^{A^{-1/2}}_{-A^{-1/2}} \frac 1{z^2} \sqrt{4z^3-Az^2+1}\,dz 
\sim  
A^{1/2} \int^{1}_{-1} \frac 1{t^2} \sqrt{1-t^2} \, dt \sim \pi A^{1/2}.
$$
This implies $h(A) \not\in \mathbb{R}$ if $A$ is sufficiently large, which
completes the proof.
\end{proof}
\par
The following fact is used in discussing solutions of (BE)$_{\phi}$.
\par
Let $0<|\phi|<\pi/3,$ and write
$$
I_{\mathbf{a}}(A)=u(A)+i v(A), \quad I_{\mathbf{b}}(A)=U(A)+i V(A).
$$
Note that $A$ solves (BE)$_{\phi}$ if and only if
$$
\im e^{i\phi}I_{\mathbf{a}}(A)=u(A)\sin\phi +v(A)\cos \phi=0, \quad
\im e^{i\phi}I_{\mathbf{b}}(A)=U(A)\sin\phi +V(A)\cos \phi=0,
$$
that is,
\begin{equation}\label{8.1}
u(A)\tan \phi +v(A)=0, \quad  U(A)\tan \phi +V(A)=0.
\end{equation}
Then, by the Cauchy-Riemann equations, the Jacobian for \eqref{8.1} with
$A=x+iy$ is written as
\begin{align}\label{8.2}
\det J(\phi,A) &= \det \begin{pmatrix} u_x \tan\phi +v_x & u_y \tan\phi +v_y \\
U_x \tan\phi +V_x & U_y \tan\phi +V_y  \end{pmatrix}
\\
\notag
&= (1+\tan^2\phi) (v_xV_y -v_y V_x)
\\
\notag
&= -\frac 14(1+\tan^2\phi) |\Omega_{\mathbf{a}}(A)|^2 \im \frac
{\Omega_{\mathbf{b}}(A)}{\Omega_{\mathbf{a}}(A)},
\end{align}
where $\Omega_{\mathbf{a}}(A)$ and $\Omega_{\mathbf{b}}(A)$ are periods of the
elliptic curve $w(A,z).$
For $0<|\phi|<\pi/3$, $(d/dt)$\eqref{8.1} with $t=\tan\phi$ is written as
$$
J(\phi, A)\begin{pmatrix}  x'(t) \\ y'(t) \end{pmatrix} +
\begin{pmatrix} u(A) \\ U(A)  \end{pmatrix} \equiv \mathbf{o}. 
$$
Then we have
\begin{equation}\label{8.3}
\text{$(x'(t),y'(t))\not=(0,0)$ and $ (d/d\phi)A=(x'(t)+i y'(t))\cos^{-2}\phi
\not=0 $} 
\end{equation}
for $0<|\phi|<\pi/3.$
\begin{prop}\label{prop8.8}
Suppose that, for some $\phi_0$ such that $0<|\phi_0|<\pi/3$, $A_{\phi_0}$
solves $(\mathrm{BE})_{\phi_0}$. Then there exists a trajectory $T_0:$ 
$A=\chi(\phi_0,\phi)$ for $0<|\phi| <\pi/3$ with the properties$:$
\par
$(1)$ $\chi(\phi_0,\phi_0)=A_{\phi_0};$ 
\par
$(2)$ for each $\phi,$ $A=\chi(\phi_0,\phi)$ solves $(\mathrm{BE})_{\phi};$
\par
$(3)$ $\chi(\phi_0,\phi)$ is smooth for $0<|\phi|<\pi/3.$
\end{prop}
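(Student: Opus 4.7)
The plan is to apply the implicit function theorem locally at $(\phi_0,A_{\phi_0})$ and then extend the resulting smooth branch to the full connected component $I$ of $\{\phi : 0<|\phi|<\pi/3\}$ containing $\phi_0$, using the a priori boundedness from Proposition \ref{prop8.7} and the non-degeneracy statement of Proposition \ref{prop8.4}.

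First, I will view the Boutroux equations $(\mathrm{BE})_\phi$ as the real $2\times 2$ system \eqref{8.1} in $A=x+iy$ parametrised by $t=\tan\phi$. Since $0<|\phi_0|<\pi/3$, Proposition \ref{prop8.4} guarantees that $\Pi_{A_{\phi_0}}$ is non-degenerate, so $\Omega_{\mathbf{a}}(A_{\phi_0})\neq 0$ and $\im(\Omega_{\mathbf{b}}/\Omega_{\mathbf{a}})(A_{\phi_0})>0$; formula \eqref{8.2} then shows $\det J(\phi_0,A_{\phi_0})\neq 0$. The implicit function theorem produces an open interval $I_0\ni\phi_0$ and a smooth map $\chi(\phi_0,\cdot)\colon I_0\to\mathbb{C}$ with $\chi(\phi_0,\phi_0)=A_{\phi_0}$ such that $\chi(\phi_0,\phi)$ solves $(\mathrm{BE})_\phi$ on $I_0$. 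This gives (1) and the local versions of (2) and (3).

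Next, let $I_{*}\subset I$ be the maximal open subinterval containing $\phi_0$ on which such a smooth continuation exists, and suppose for contradiction that $I_*$ has a finite endpoint $\phi_{+}\in I$. By Proposition \ref{prop8.7}, $\chi(\phi_0,\phi)$ remains in a fixed bounded region of $\mathbb{C}$ as $\phi\to\phi_+$, so some sequence $\phi_n\to\phi_+$ admits $\chi(\phi_0,\phi_n)\to A_*$ for some $A_*\in\mathbb{C}$; passing to the limit in \eqref{8.1}, $A_*$ solves $(\mathrm{BE})_{\phi_+}$. Since $\phi_+$ lies strictly in the interior of $I$, Proposition \ref{prop8.4} again forces $\Pi_{A_*}$ to be non-degenerate, so the Jacobian \eqref{8.2} is non-vanishing at $(\phi_+,A_*)$. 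Invoking the implicit function theorem at $(\phi_+,A_*)$ yields a smooth branch $\tilde\chi$ defined on a neighbourhood of $\phi_+$ with $\tilde\chi(\phi_+)=A_*$; local uniqueness forces $\chi(\phi_0,\phi_n)=\tilde\chi(\phi_n)$ for large $n$, and hence $\chi(\phi_0,\cdot)=\tilde\chi$ on an open left-neighbourhood of $\phi_+$. Consequently $\tilde\chi$ extends $\chi(\phi_0,\cdot)$ smoothly through $\phi_+$, contradicting the maximality of $I_*$. The symmetric argument at the left endpoint gives $I_*=I$, proving (2) and (3) globally.

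The main subtlety I expect is ruling out oscillation of $\chi(\phi_0,\phi)$ as $\phi\to\phi_+^-$: a priori two different accumulation points $A_*\neq A_*'$ might coexist. However, each such accumulation point is non-degenerate by Proposition \ref{prop8.4} and therefore supports its own local IFT branch at $(\phi_+,\cdot)$; since $\chi(\phi_0,\cdot)$ must coincide with each of those branches on an open left-neighbourhood of $\phi_+$ (by the local uniqueness noted above), one obtains $A_*=A_*'$. The remainder is a standard continuation argument: Proposition \ref{prop8.4} keeps the trajectory away from degenerations of $\Pi_A$ throughout $I$, and Proposition \ref{prop8.7} keeps it from escaping to infinity, so the implicit function theorem can be re-applied at every point reached along the way.
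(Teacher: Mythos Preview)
Your proof is correct and follows the same strategy as the paper: apply the implicit function theorem locally using the non-vanishing Jacobian \eqref{8.2}, then extend. The paper's proof is two sentences and simply asserts that since \eqref{8.2} stays in $\mathbb{R}\setminus\{0\}$ along the trajectory, the local branch ``may be extended to the interval $0<|\phi|<\pi/3$''; you have supplied the details behind that assertion --- invoking Proposition~\ref{prop8.7} for compactness and Proposition~\ref{prop8.4} for non-degeneracy at accumulation points, and then ruling out oscillation --- which is exactly what the paper is tacitly using.

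Two minor remarks. First, in your oscillation paragraph the phrase ``local uniqueness'' really needs the connectedness/ODE-uniqueness step: once $\chi(\phi_0,\phi_{n})=\tilde\chi(\phi_{n})$ at one point, the set $\{\phi:\chi(\phi_0,\phi)=\tilde\chi(\phi)\}$ is open (by IFT uniqueness near any common value) and closed (by continuity) in the overlap, hence is the whole overlap; this is what upgrades agreement along a subsequence to agreement on an interval. Second, you prudently restrict to the connected component $I$ of $\{0<|\phi|<\pi/3\}$ containing $\phi_0$. That is the correct reading: the paper's own argument only produces the branch on that component, and passage through $\phi=0$ to the other component is handled separately via Proposition~\ref{prop8.9} and the local analysis \eqref{8.4} of $h(A)$ near $A_0$.
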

\begin{proof}
Since the Jacobian \eqref{8.2} satisfies $\det J(\phi_0, A_{\phi_0}) \in
\mathbb{R}\setminus \{0\},$ there exists a local trajectory 
$A=\chi_{\mathrm{loc}}(\phi_0,\phi)$ having the properties $(1)$, $(2)$ and 
$(3)$ above for $|\phi-\phi_0|<\delta$, where $\delta$ is sufficiently small. 
Since \eqref{8.2} is in 
$\mathbb{R} \setminus \{0\}$ for $0<|\phi|<\pi/3,$ $\chi_{\mathrm{loc}}(\phi_0,
\phi)$ may be extended to the interval $0<|\phi|<\pi/3.$
\end{proof}
\begin{prop}\label{prop8.9}
The trajectory $T_0:$ $A=\chi(\phi_0,\phi)$ given above
may be extended to $|\phi|\le \pi/3$ such that $\chi(\phi_0,\phi)$ is 
continuous in $\phi$ and that $\chi(\phi_0,0)=A_0=3\cdot 2^{2/3},$ 
$\chi(\phi_0,\pm \pi/3)=A_{\pm \pi/3}= 3\cdot 2^{2/3}e^{\mp 2\pi i/3}.$
\end{prop}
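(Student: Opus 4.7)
The plan is to extract subsequential limits at the endpoints $\phi \in \{0,\pm\pi/3\}$ using the boundedness from Proposition \ref{prop8.7}, and to identify each limit by means of the uniqueness statements in Propositions \ref{prop8.2} and \ref{prop8.5}.

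First I would treat the limit $\phi \to 0^+$. Boundedness of the trajectory guarantees a subsequential limit $A^* \in \mathbb{C}$ along any sequence $\phi_n \to 0^+$. Writing $A_n = \chi(\phi_0,\phi_n)$, which satisfies $(\mathrm{BE})_{\phi_n}$, I claim $A^* = A_0$. If $\Pi_{A^*}$ were non-degenerate, then continuity of $I_{\mathbf{a}}$ and $I_{\mathbf{b}}$ in $A$ near $A^*$ (with cycles tracked by continuation along the trajectory) would give $\im I_{\mathbf{a}}(A^*) = \im I_{\mathbf{b}}(A^*) = 0$, so $A^*$ would solve $(\mathrm{BE})_0$ and hence equal $A_0 = 3\cdot 2^{2/3}$ by Proposition \ref{prop8.2}, contradicting non-degeneracy. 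Therefore $\Pi_{A^*}$ degenerates, so $A^* \in \{A_0, A_{\pm\pi/3}\}$. To rule out $A^* = A_{\pm\pi/3}$ I use the calculation from the proof of Proposition \ref{prop8.5}: under $z = e^{\mp 2\pi i/3}\zeta$ one has $w(A_{\pm\pi/3}, z)^2 = w(A_0,\zeta)^2$, so $I_{\mathbf{b}}(A_{\pm\pi/3})$ equals $\pm e^{\pm 2\pi i/3}\cdot(-2^{4/3}3^{3/2})$ (by Example \ref{exa8.1}), which is not real. Passing to the limit in $\im e^{i\phi_n} I_{\mathbf{b}}(A_n) = 0$ using the continuity of $I_{\mathbf{b}}$ at $A_{\pm\pi/3}$ (the $\mathbf{b}$-cycle does not vanish there) would yield $\im I_{\mathbf{b}}(A_{\pm\pi/3}) = 0$, a contradiction. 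Hence every subsequential limit equals $A_0$, so $\lim_{\phi\to 0^+}\chi(\phi_0,\phi) = A_0$.

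The identical scheme applied at $\phi\to 0^-$ (via Proposition \ref{prop8.2}) and at $\phi\to(\pm\pi/3)^\mp$ (via Proposition \ref{prop8.5}) delivers the respective limits $A_0$ and $A_{\pm\pi/3}$. Since the two one-sided limits at $\phi=0$ both equal $A_0$, the pieces of the trajectory on $(-\pi/3,0)$ and $(0,\pi/3)$ glue into a single continuous function on $[-\pi/3,\pi/3]$, as required.

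The main obstacle is justifying the passage to the limit in $I_{\mathbf{a}}(A_n)$ and $I_{\mathbf{b}}(A_n)$ when $A_n$ approaches a node of the elliptic curve, since one cycle collapses while the other must be tracked across the degeneration. I would handle this with the standard local normal form at a node: the vanishing $\mathbf{a}$-cycle gives $I_{\mathbf{a}}(A) = O(|A-A_\star|^{1/2})$ as $A\to A_\star\in\{A_0,A_{\pm\pi/3}\}$, while $I_{\mathbf{b}}(A)$ extends continuously (at worst with a removable logarithmic contribution from the shrinking handle), so both limiting Boutroux conditions can be read off explicitly. Combined with the $\mathbb{Z}/3$-symmetry $A \mapsto e^{\pm 2\pi i/3}A$ used in the proof of Proposition \ref{prop8.5}, this makes the above dichotomy rigorous and excludes the spurious possibilities $A^*=A_{\pm\pi/3}$ in the $\phi \to 0$ limit.
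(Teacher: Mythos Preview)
Your approach is essentially the same as the paper's: extract a subsequential limit using the boundedness from Proposition~\ref{prop8.7}, then identify it via the uniqueness in Propositions~\ref{prop8.2} and~\ref{prop8.5}. The paper's proof is slightly more streamlined, however, because it does not split into degenerate versus non-degenerate cases. It simply passes to the limit in the Boutroux equations (relying on the continuity of $I_{\mathbf{a}}$ and $I_{\mathbf{b}}$ as functions of $A$, which holds through the nodal degenerations as confirmed by Example~\ref{exa8.1}) and then invokes uniqueness directly: any limit $A'_0$ solves $(\mathrm{BE})_0$, so $A'_0 = A_0$ by Proposition~\ref{prop8.2}, and similarly at $\phi = \pm\pi/3$. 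Since Propositions~\ref{prop8.2} and~\ref{prop8.5} establish uniqueness among \emph{all} $A \in \mathbb{C}$, your separate exclusion of $A_{\pm\pi/3}$ via the explicit value of $I_{\mathbf{b}}$ is correct but redundant --- uniqueness already rules these out. Your final paragraph about tracking cycles through the degeneration is a legitimate technical point that the paper leaves implicit.
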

\begin{proof}

To show that $\chi(\phi_0,\phi)\to A_0$ as $\phi \to 0$, suppose
to the contrary that there exists a sequence $\{\phi_{\nu}\}$ such that
$\phi_{\nu}\to 0$ and that $\chi(\phi_0,\phi_{\nu})$ does not converge
to $A_0.$ There exists a subsequence $\{ \phi_{\nu(n)} \}$ such that
$\chi(\phi_0,\phi_{\nu(n)}) \to A'_0$ for some $A'_0\not=A_0,$ since, 
by Proposition \ref{prop8.7}, 
the trajectory $T_0$ for $0<|\phi|<\pi/3$ is bounded. Then we have
$\im I_{\mathbf{a}}(A'_0)=\im I_{\mathbf{b}}(A'_0)=0,$ which contradicts
the uniqueness of a solution of $(\mathrm{BE})_0.$ Hence $\chi(\phi_0,\phi)$
is extended to $\phi=0$ and is continuous in a neighbourhood of $\phi=0.$
By Proposition \ref{prop8.5}, it is possible to extend $\chi(\phi_0,\phi)$ 
to $\phi=\pm\pi/3$ by the same argument. 
\end{proof}
\begin{lem}\label{lem8.10}
$h'(A)=-6\pi i I_{\mathbf{b}}(A)^{-2}.$
\end{lem}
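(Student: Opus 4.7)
The plan is to compute $h'(A)$ directly by the quotient rule and then identify the numerator with a Legendre-type identity already established in Lemma \ref{lem6.9}.

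First I would differentiate $I_{\mathbf{a}}(A)$ and $I_{\mathbf{b}}(A)$ under the integral sign. Since $w(A,z)^2 = 4z^3-Az^2+1$, we have $\partial_A w(A,z) = -z^2/(2w(A,z))$, so
\begin{equation*}
I'_{\mathbf{a}}(A) = \int_{\mathbf{a}} \frac{1}{z^2} \cdot \frac{-z^2}{2w(A,z)}\, dz = -\frac{1}{2}\Omega_{\mathbf{a}}(A),
\end{equation*}
and identically $I'_{\mathbf{b}}(A) = -\tfrac12 \Omega_{\mathbf{b}}(A)$, with $\Omega_{\mathbf{a},\mathbf{b}}(A) = \int_{\mathbf{a},\mathbf{b}} dz/w(A,z)$ the periods of $\Pi_A$.

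Next, applying the quotient rule to $h(A) = I_{\mathbf{a}}(A)/I_{\mathbf{b}}(A)$ yields
\begin{equation*}
h'(A) = \frac{I'_{\mathbf{a}}(A)I_{\mathbf{b}}(A) - I_{\mathbf{a}}(A)I'_{\mathbf{b}}(A)}{I_{\mathbf{b}}(A)^2} = \frac{-\tfrac12\bigl(\Omega_{\mathbf{a}}(A) I_{\mathbf{b}}(A) - \Omega_{\mathbf{b}}(A) I_{\mathbf{a}}(A)\bigr)}{I_{\mathbf{b}}(A)^2}.
\end{equation*}
Now I would invoke the Legendre-type relation proved in Lemma \ref{lem6.9}, namely $\Omega_{\mathbf{a}}\mathcal{J}_{\mathbf{b}} - \Omega_{\mathbf{b}}\mathcal{J}_{\mathbf{a}} = 12\pi i$, which holds for every admissible value of the modulus and in particular for general $A$ with $I_{\mathbf{a},\mathbf{b}}(A) = \mathcal{J}_{\mathbf{a},\mathbf{b}}$; this gives $\Omega_{\mathbf{a}}(A)I_{\mathbf{b}}(A) - \Omega_{\mathbf{b}}(A)I_{\mathbf{a}}(A) = 12\pi i$. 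Substituting back produces the claimed formula $h'(A) = -6\pi i\, I_{\mathbf{b}}(A)^{-2}$.

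There is essentially no obstacle; the only mildly delicate point is the justification that Lemma \ref{lem6.9} — stated there in terms of $A_\phi$ satisfying the Boutroux equations — actually applies to general complex $A$ in a neighborhood of a nondegenerate $\Pi_A$. This follows because the identity is derived purely from manipulations of abelian differentials on $\Pi_A$ (via the reduction formula for $\int w/z^2\, dz$ and the classical Legendre relation $\Omega_{\mathbf{a}}J_{\mathbf{b}} - \Omega_{\mathbf{b}}J_{\mathbf{a}} = -\tfrac{A^2}{15}\pi i$), none of which uses the reality conditions defining $A_\phi$.
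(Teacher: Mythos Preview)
Your argument is correct and follows exactly the paper's approach: the paper's proof simply records $I'_{\mathbf{a},\mathbf{b}}(A)=-\Omega_{\mathbf{a},\mathbf{b}}/2$ and then invokes Lemma~\ref{lem6.9}, which is precisely your quotient-rule computation with the Legendre-type identity substituted in. Your remark that Lemma~\ref{lem6.9} is valid for general $A$ (not just $A_\phi$) is well placed, since its derivation indeed uses only the integral reduction and the classical Legendre relation on $\Pi_A$.
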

\begin{proof}
From $I'_{\mathbf{a},\, \mathbf{b}}(A)=-\Omega_{\mathbf{a},\, \mathbf{b}}/2$
and Lemma \ref{lem6.9}, the conclusion follows.
\end{proof}
\begin{cor}\label{cor8.11}
If $I_{\mathbf{b}}(A)\not=0, \infty$, then $h(A)$ is conformal around $A$.
\end{cor}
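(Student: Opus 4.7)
The plan is to deduce the corollary directly from Lemma~\ref{lem8.10}. The key idea is that conformality of a holomorphic map at a point is equivalent to the nonvanishing of its derivative there, so the whole task reduces to checking that both $h$ is well-defined and holomorphic near $A$, and that $h'(A)$ is finite and nonzero.

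First, I would verify holomorphy of $h$ in a neighbourhood of $A$. Since $A \ne 3 \cdot 2^{2/3} e^{2\pi i m/3}$ (otherwise $I_{\mathbf{b}}(A)$ would involve a degenerate $\Pi_A$ and blow up, contradicting $I_{\mathbf{b}}(A)\ne \infty$), the cycles $\mathbf{a},\mathbf{b}$ can be defined continuously on $\Pi_{A'}$ for $A'$ close to $A$, and the period integrals $I_{\mathbf{a}}(A')$, $I_{\mathbf{b}}(A')$ depend holomorphically on $A'$. The hypothesis $I_{\mathbf{b}}(A)\ne 0$ then ensures that the quotient $h(A') = I_{\mathbf{a}}(A')/I_{\mathbf{b}}(A')$ is holomorphic in some open neighbourhood of $A$.

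Next, I would invoke Lemma~\ref{lem8.10}, which gives
$$
h'(A) = -6\pi i\, I_{\mathbf{b}}(A)^{-2}.
$$
The assumption $I_{\mathbf{b}}(A) \ne 0,\infty$ makes the right-hand side finite and nonzero. Hence $h$ is holomorphic near $A$ with $h'(A) \ne 0$, which is precisely the statement that $h$ is conformal at $A$.

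There is no real obstacle here: the corollary is essentially a one-line consequence of the derivative formula in Lemma~\ref{lem8.10} combined with the standard characterisation of conformality. The only minor point requiring care is the implicit verification that $\Pi_A$ does not degenerate, which follows from the exclusion $I_{\mathbf{b}}(A) \ne \infty$ together with the observation that $|I_{\mathbf{b}}|$ diverges at the three exceptional values of $A$.
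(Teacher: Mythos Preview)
Your proposal is correct and matches the paper's approach: the corollary is stated without proof, as an immediate consequence of Lemma~\ref{lem8.10} together with the standard fact that a holomorphic function is conformal precisely where its derivative is finite and nonzero.

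One small correction to your final paragraph: the claim that $|I_{\mathbf{b}}|$ diverges at all three degenerate values of $A$ is not accurate. Example~\ref{exa8.1} computes $I_{\mathbf{b}}(3\cdot 2^{2/3}) = -2^{4/3}\cdot 3^{3/2}$, which is finite, and in fact the paper applies the corollary precisely at this degenerate point $A_0 = 3\cdot 2^{2/3}$ immediately after stating it. So non-degeneracy of $\Pi_A$ is neither implied by the hypothesis nor needed for the argument; the assumptions $I_{\mathbf{b}}(A)\ne 0,\infty$ and the derivative formula alone suffice.
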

\par
By Example \ref{exa8.1}, $h(A)$ is conformal at $A_0=3\cdot 2^{2/3}$ and 
$h(A_0)=0.$ By Lemma \ref{lem8.10},
$$
h(A)=h'(A_0)(A-A_0)+o(A-A_0)=-\frac{\pi i}{2^{5/3}\cdot 3^2}(A-A_0)
+o(A-A_0)
$$
around $A=A_0.$ By Proposition \ref{prop8.6}, for a
sufficiently small $\varepsilon >0,$ the inverse image of $(-\varepsilon,0) 
\cup (0,\varepsilon)$ under $h(A)$ is a trajectory $T_{0-} \cup T_{0+}$:
$A= \chi^{\pm}_0(\phi)$ solving (BE)$_{\phi}$, and is expressed as 
\begin{equation}\label{8.4}
\chi^{\pm}_0(\phi) = A_0 +\gamma_0(\phi) i + o(\gamma_0(\phi)), 
\end{equation}
near $\phi=0$, where $\gamma_0(\phi) \in\mathbb{R}$ is continuous in $\phi$
and $\gamma_0(0)=0.$
\par
The fact above implies that there exists a local 
trajectory solving (BE)$_{\phi}$ near $\phi =0.$ From this, a trajectory
for $|\phi|\le \pi/3$ as in Proposition \ref{prop8.9} may be obtained. 
Furthermore, if two trajectories $\chi_1(\phi)$ and $\chi_2(\phi)$ solving
(BE)$_{\phi}$ satisfy $\chi_1(\phi_0)=\chi_2(\phi_0)$ for some $\phi_0$ such
that $0<|\phi_0|<\pi/3,$ then \eqref{8.2} or the conformality of $h(A)$ at
$A=A_0$ implies $\chi_1(\phi)\equiv \chi_2(\phi).$ Thus we have the following.
\begin{prop}\label{prop8.12}
There exists a trajectory $A=A_{\phi}$ for $|\phi| \le \pi/3$ with the 
properties$:$
\par
$(1)$ for each $\phi$, $A_{\phi}$ is a unique solution of $(\mathrm{BE})_{\phi};$
\par
$(2)$ $A_{\phi}$ is smooth in $\phi$ for $0<|\phi|<\pi/3$ and continuous in
$\phi$ for $|\phi|\le \pi/3.$
\end{prop}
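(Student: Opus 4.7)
The plan is to build $A_{\phi}$ by continuation of the local trajectory $\chi_0^{\pm}(\phi)$ supplied by \eqref{8.4} near $\phi=0$, and then to verify uniqueness separately. I would treat $0\le\phi\le\pi/3$; the case $-\pi/3\le\phi\le 0$ is identical. From \eqref{8.4}, the conformality of $h$ at $A_0$ (Lemma \ref{lem8.10}, since $I_{\mathbf{b}}(A_0)=-2^{4/3}3^{3/2}\ne 0$) gives an analytic arc $\chi_0^{+}(\phi)$ solving $(\mathrm{BE})_\phi$ for small $\phi>0$. At each $\phi_0\in(0,\pi/3)$, the Jacobian in \eqref{8.2} is nonzero because $\im(\Omega_{\mathbf{b}}/\Omega_{\mathbf{a}})>0$, so Proposition \ref{prop8.8} extends the branch smoothly through $\phi_0$; boundedness from Proposition \ref{prop8.7} prevents blow-up. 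Concatenating these local branches I obtain a smooth trajectory $A_\phi$ on the full interval $0<\phi<\pi/3$.

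Next I would take limits at the endpoints. Every subsequential cluster value of $A_\phi$ as $\phi\to 0^+$ solves $(\mathrm{BE})_0$ by continuity of $I_{\mathbf{a},\mathbf{b}}(A)$ and must therefore equal $A_0=3\cdot 2^{2/3}$ by Proposition \ref{prop8.2}; likewise any cluster value as $\phi\to (\pi/3)^-$ equals $3\cdot 2^{2/3}e^{-2\pi i/3}$ by Proposition \ref{prop8.5}. Together with boundedness this yields genuine limits, so $A_\phi$ extends continuously to $[0,\pi/3]$ with the required endpoint values, and smoothness on the open interval is built into the construction.

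For uniqueness at an interior $\phi_*\in(0,\pi/3)$, suppose $\tilde A$ is any further solution of $(\mathrm{BE})_{\phi_*}$. By Proposition \ref{prop8.8} there is a smooth trajectory $\tilde\chi(\phi)$ through $\tilde A$ solving $(\mathrm{BE})_\phi$ on $(0,\pi/3)$, and by the cluster-value argument above $\tilde\chi(\phi)\to A_0$ as $\phi\to 0^+$. The decisive step is then that \emph{both} $\tilde\chi$ and $\chi_0^{+}$ must coincide near $\phi=0$: since $h'(A_0)\ne 0$, the map $h$ is a local biholomorphism at $A_0$, so $h^{-1}((-\varepsilon,\varepsilon))$ is a single analytic arc, and any trajectory of $(\mathrm{BE})_\phi$ converging to $A_0$ must lie on it (by Proposition \ref{prop8.6}(1)). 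Hence $\tilde\chi\equiv\chi_0^{+}$ on a right-neighbourhood of $0$, and the nondegenerate Jacobian \eqref{8.2} then propagates the identity along the whole interval, giving $\tilde A=\tilde\chi(\phi_*)=A_{\phi_*}$. Uniqueness at $\phi=0,\pm\pi/3$ is already given by Propositions \ref{prop8.2} and \ref{prop8.5}.

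The main obstacle is the uniqueness step at interior $\phi$, because the implicit function theorem only gives local rigidity and a priori two distinct trajectories could merge only in the limit $\phi\to 0$; this is precisely where the conformality of $h$ at $A_0$ — rather than merely the nonvanishing Jacobian away from $A_0$ — is essential, ensuring that no two trajectories can share the same limit value $A_0$. Once this rigidity at the degenerate endpoint is in hand, all other properties follow by routine propagation along $(0,\pi/3)$.
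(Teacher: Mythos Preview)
Your proposal is correct and follows essentially the same route as the paper: construct the local branch near $\phi=0$ from \eqref{8.4}, extend it to $0<|\phi|<\pi/3$ via the nondegenerate Jacobian \eqref{8.2} (Proposition~\ref{prop8.8}) together with the boundedness of Proposition~\ref{prop8.7}, reach the endpoints by the cluster-value argument (this is exactly the proof of Proposition~\ref{prop8.9}), and settle uniqueness by forcing any competing trajectory through the single arc $h^{-1}((-\varepsilon,\varepsilon))$ at $A_0$ using the conformality of $h$ there (Lemma~\ref{lem8.10} and Corollary~\ref{cor8.11}), then propagating via \eqref{8.2}. Your identification of the crucial step --- that the implicit function theorem alone does not prevent two trajectories from merging only at the degenerate endpoint, and that conformality of $h$ at $A_0$ is what closes this gap --- matches the paper's reasoning precisely; the paper compresses this into the single sentence preceding Proposition~\ref{prop8.12}, invoking ``\eqref{8.2} or the conformality of $h(A)$ at $A=A_0$'' for exactly the same purpose.
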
 
\par
For any cycle $\mathbf{c}$, it is easy to see that
\begin{align*}
e^{i\phi}\int_{\mathbf{c}}\frac 1{z^2}w(A_{\phi},z)dz&=e^{i(\phi\mp 2\pi /3)}
\int_{e^{\mp 2\pi i/3}\mathbf{c}} \frac 1{\zeta^2} w(e^{\mp 2\pi i/3}A_{\phi},
\zeta) d\zeta,
\\
e^{i\phi}\int_{\mathbf{c}}\frac 1{z^2}w(A_{\phi},z)dz&= - e^{i(\phi+ \pi )}
\int_{\mathbf{c}} \frac 1{\zeta^2} w(A_{\phi},\zeta) d\zeta,
\end{align*}
which yields the following.
\begin{prop}\label{prop8.13}
Set $A_{\phi \mp 2\pi /3}=e^{\mp 2\pi i/3} A_{\phi}$ for $|\phi| \le \pi/3.$
Then for $|\phi|\le \pi,$ $A_{\phi}$ is a unique solution of $(\mathrm{BE})
_{\phi}$. Furthermore $A_{\phi+\pi}=A_{\phi},$ $A_{-\phi}=\overline{A_{\phi}}.$ 
\end{prop}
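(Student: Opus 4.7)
The plan is to reduce every $\phi$ with $|\phi|\le\pi$ to the interval $|\phi'|\le\pi/3$ on which Proposition \ref{prop8.12} already applies, and then read off the three claimed properties from the two transformation identities displayed in the paragraph just above the statement. First I would verify that the prescribed definition $A_{\phi\mp 2\pi/3}=e^{\mp 2\pi i/3}A_\phi$ (for $|\phi|\le\pi/3$) is unambiguous on the overlap $|\phi|=\pi/3$ (where it must be compared with the boundary values $A_{\pm\pi/3}=3\cdot 2^{2/3}e^{\mp 2\pi i/3}$ already recorded in Proposition \ref{prop8.12}) and actually produces a solution of $(\mathrm{BE})_{\phi\mp 2\pi/3}$. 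For this, in the first displayed identity I substitute $z=e^{\mp 2\pi i/3}\zeta$: the cycle $\mathbf{c}$ on $\Pi_{A_\phi}$ is mapped bijectively to the cycle $e^{\mp 2\pi i/3}\mathbf{c}$ on $\Pi_{e^{\mp 2\pi i/3}A_\phi}$, and $e^{i\phi}$ times the integral on $\Pi_{A_\phi}$ equals $e^{i(\phi\mp 2\pi/3)}$ times the integral on $\Pi_{e^{\mp 2\pi i/3}A_\phi}$. Taking imaginary parts delivers the Boutroux equations at the shifted angle for every cycle.

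For uniqueness on $|\phi|\le\pi$, I would take any $B$ satisfying $(\mathrm{BE})_\phi$, write $\phi=\phi'+2\pi m/3$ with $|\phi'|\le\pi/3$ and $m\in\{-1,0,1\}$, and apply the same transformation identity in reverse to conclude that $e^{-2\pi im/3}B$ solves $(\mathrm{BE})_{\phi'}$. Proposition \ref{prop8.12}\,(1) then gives $e^{-2\pi im/3}B=A_{\phi'}$, hence $B=e^{2\pi im/3}A_{\phi'}=A_\phi$.

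The two remaining symmetries fall out similarly. The second displayed identity $e^{i\phi}\int_{\mathbf{c}}z^{-2}w(A,z)\,dz=-e^{i(\phi+\pi)}\int_{\mathbf{c}}\zeta^{-2}w(A,\zeta)\,d\zeta$ has no $A$-change and the same cycle on both sides, so its imaginary part vanishes for $\phi$ precisely when it vanishes for $\phi+\pi$; the uniqueness just proved then gives $A_{\phi+\pi}=A_\phi$. For $A_{-\phi}=\overline{A_\phi}$, I would take complex conjugates and change variable $z\mapsto\overline{z}$ to obtain $\overline{I_{\mathbf{c}}(A)}=I_{\overline{\mathbf{c}}}(\overline{A})$, where $\overline{\mathbf{c}}$ denotes the image cycle on $\Pi_{\overline{A}}$; this gives $\im e^{i\phi}I_{\mathbf{c}}(A)=0$ for every cycle on $\Pi_A$ if and only if $\im e^{-i\phi}I_{\overline{\mathbf{c}}}(\overline{A})=0$ for every cycle on $\Pi_{\overline{A}}$. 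Thus $\overline{A_\phi}$ solves $(\mathrm{BE})_{-\phi}$, and uniqueness concludes.

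No single step is genuinely deep; the main care-point will be the bookkeeping of the cycle-correspondences and branch choices of $w(A,z)$ under the rotations $z\mapsto e^{\pm 2\pi i/3}z$ and under conjugation $z\mapsto\overline{z}$, so that the identities invoked really are the ones displayed in the excerpt rather than versions differing by a sign.
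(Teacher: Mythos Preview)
Your proposal is correct and follows essentially the same route as the paper: the paper's proof consists of displaying the two transformation identities (rotation $z\mapsto e^{\mp 2\pi i/3}z$ and the trivial $\phi\mapsto\phi+\pi$ sign change) and then simply stating ``which yields the following,'' relying on Proposition~\ref{prop8.12} for uniqueness on $|\phi|\le\pi/3$. You have unpacked exactly this, and in fact supplied the complex-conjugation argument for $A_{-\phi}=\overline{A_\phi}$ that the paper leaves entirely implicit; your care about cycle correspondences and the overlap at $|\phi|=\pi/3$ is appropriate but routine.
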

Let us examine the properties of $A_{\phi}$ in more detail. Note that the 
trajectory $A=A_{\phi}=x+iy$ for $|\phi|<\pi /3$ satisfies 
$h(A_{\phi})\in\mathbb{R}.$ Then, by \eqref{8.3}, 
$$
\frac{d}{dt} h(A_{\phi}) =(x'(t)+iy'(t))(-6\pi i) I_{\mathbf{b}}(A_{\phi})^{-2}
\in \mathbb{R} \setminus \{0\}
$$ 
with $t=\tan\phi$ for $0<|\phi|<\pi/3.$ Setting $I_{\mathbf{b}}(A_{\phi})^{-1}
=P+i Q,$ we have
$$
-\frac 1{6\pi}\im \frac d{dt} h(A_{\phi}) =x'(t)(P^2-Q^2) -2y'(t) PQ=0.
$$
If $x'(t_0)=0$ for some $t_0 =\tan(\phi_0)\not=0, \pm \infty$, then $PQ =0,$
and hence $I_{\mathbf{b}}(A_{\phi_0}) \in i\mathbb{R} \setminus \{0\}$ or
$\mathbb{R} \setminus \{0\}.$ This is impossible for $0<|\phi|<\pi/3,$ which
implies $x'(t)\not=0$ for $0<|\phi|<\pi/3.$ Since $A_{\pm \pi/3}= A_0
e^{\mp 2\pi i/3},$ we have $x'(t)<0$ for $0<\phi <\pi/3$ and $x'(t)>0$ for
$-\pi/3 <\phi <0.$ If $y'(t_0)=0$ for some $t_0$ with $0<|\phi_0|<\pi/3$, then
$P^2-Q^2=0,$ i.e. $I_{\mathbf{b}}(A_{\phi_0})^{-1}=P(1 \pm i),$ 
implying $\phi_0=\pm \pi/4.$ Note that $-P<Q<0$ for $-\pi/4 <\phi<0$ 
and that $0<Q<P$ for $0<\phi<\pi/4.$ It follows that $y'(t)<0$ for 
$0<|\phi|<\pi/4.$
\begin{prop}\label{prop8.14}
The trajectory $A_{\phi}=x(t)+iy(t)$ with $t=\tan\phi$ has the properties$:$
\par
$(1)$ $x'(t)>0$ for $-\pi/3<\phi<0,$ and $x'(t)<0$ for $0<\phi<\pi/3;$
\par
$(2)$ $y'(t)<0$ for $0<|\phi|<\pi/4$ and $y'(\tan(\pm \pi/4))=0.$
\end{prop}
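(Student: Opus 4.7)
The plan is to build on the identity sketched in the paragraph immediately preceding the proposition. Writing $I_{\mathbf{b}}(A_{\phi})^{-1}=P+iQ$ and using Lemma~\ref{lem8.10} together with $h(A_{\phi})\in\mathbb{R}$, differentiation in $t=\tan\phi$ yields the real equation
\begin{equation*}
x'(t)(P^{2}-Q^{2})-2y'(t)PQ=0,
\end{equation*}
which, combined with \eqref{8.3} saying $(x'(t),y'(t))\neq (0,0)$, is the single engine for both assertions.

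For part~(1), I would argue by contradiction: if $x'(t_{0})=0$ at some $\phi_{0}$ with $0<|\phi_{0}|<\pi/3$, then $y'(t_{0})\neq 0$ forces $PQ=0$, so $I_{\mathbf{b}}(A_{\phi_{0}})\in\mathbb{R}\cup i\mathbb{R}$. But $(\mathrm{BE})_{\phi_{0}}$ says $e^{i\phi_{0}}I_{\mathbf{b}}(A_{\phi_{0}})\in\mathbb{R}$, and Corollary~\ref{cor8.3} forbids $I_{\mathbf{b}}(A_{\phi_{0}})=0$ (otherwise combined with $(\mathrm{BE})_{\phi_{0}}$ and Proposition~\ref{prop8.1} one returns to $\phi_{0}=0$); so $\phi_{0}\equiv 0$ or $\pi/2\pmod{\pi}$, which is excluded. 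Hence $x'(t)$ has constant sign on each of $(-\pi/3,0)$ and $(0,\pi/3)$. The boundary data from Proposition~\ref{prop8.13}, namely $x(\tan(\pm\pi/3))=-\tfrac{3}{2}\cdot 2^{2/3}$ and $x(0)=3\cdot 2^{2/3}$, then pin down the signs as stated.

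For part~(2), I would exploit $(\mathrm{BE})_{\phi}$ more cleanly by writing $I_{\mathbf{b}}(A_{\phi})=s\,e^{-i\phi}$ with $s\in\mathbb{R}\setminus\{0\}$ (continuity from $s(0)=I_{\mathbf{b}}(A_{0})=-2^{4/3}3^{3/2}\neq 0$ of Example~\ref{exa8.1}, plus non-vanishing by Corollary~\ref{cor8.3}, prevents $s$ from vanishing on the range considered). Then $P+iQ=s^{-1}e^{i\phi}$ gives
\begin{equation*}
P^{2}-Q^{2}=s^{-2}\cos(2\phi),\qquad 2PQ=s^{-2}\sin(2\phi),
\end{equation*}
so the engine identity reduces to $x'(t)\cos(2\phi)=y'(t)\sin(2\phi)$. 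For $0<|\phi|<\pi/4$ we have $\sin(2\phi)\neq 0$, hence $y'(t)=x'(t)\cot(2\phi)$; combining the sign of $\cot(2\phi)$ with part~(1) yields $y'(t)<0$ in each of $(-\pi/4,0)$ and $(0,\pi/4)$. At $\phi=\pm\pi/4$ the same identity reads $0=y'(t)\sin(\pm\pi/2)$, forcing $y'(t)=0$.

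The only real obstacle is making sure $I_{\mathbf{b}}(A_{\phi})\neq 0$ throughout the interval, which underpins the use of Lemma~\ref{lem8.10} and the representation $I_{\mathbf{b}}=s e^{-i\phi}$; this follows from Corollary~\ref{cor8.3} (which rules out simultaneous vanishing of $I_{\mathbf{a}},I_{\mathbf{b}}$) combined with continuity of $A_{\phi}$ from Proposition~\ref{prop8.12} and the explicit value at $\phi=0$ in Example~\ref{exa8.1}. Once this is in hand, everything else is a short sign calculation using the boundary values $A_{\pm\pi/3}=3\cdot 2^{2/3}e^{\mp 2\pi i/3}$.
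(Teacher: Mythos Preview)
Your proof is correct and follows essentially the same route as the paper: both derive the identity $x'(t)(P^{2}-Q^{2})-2y'(t)PQ=0$ from $\im\frac{d}{dt}h(A_{\phi})=0$ with $I_{\mathbf{b}}^{-1}=P+iQ$, rule out $x'(t_{0})=0$ by showing it would force $I_{\mathbf{b}}(A_{\phi_{0}})\in\mathbb{R}\cup i\mathbb{R}$, and then fix the sign of $x'$ from the boundary values $A_{0}$, $A_{\pm\pi/3}$.

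For part~(2) your argument is slightly more streamlined than the paper's. The paper argues indirectly: if $y'(t_{0})=0$ then $P^{2}=Q^{2}$, forcing $\phi_{0}=\pm\pi/4$, and then checks the signs of $P$ and $Q$ on each subinterval to conclude $y'<0$. You instead exploit $(\mathrm{BE})_{\phi}$ to write $I_{\mathbf{b}}(A_{\phi})=s e^{-i\phi}$, which collapses the engine identity to $x'(t)\cos 2\phi=y'(t)\sin 2\phi$, i.e.\ $y'=x'\cot 2\phi$; the sign of $y'$ and the vanishing at $\phi=\pm\pi/4$ then follow at once from part~(1). This is the same mechanism, just with cleaner bookkeeping. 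One small remark: your justification that $I_{\mathbf{b}}(A_{\phi})\neq 0$ on $0<|\phi|<\pi/3$ via Corollary~\ref{cor8.3} plus continuity from $\phi=0$ is not quite a complete argument (continuity alone does not forbid $s(\phi)$ from crossing zero), but the paper itself uses $I_{\mathbf{b}}^{-1}$ without further comment, so you are at the same level of rigour.
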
 
Thus we have
\begin{prop}\label{prop8.15}
For $\phi \in \mathbb{R}$ there exists a trajectory $A=A_{\phi}$ with the
properties$:$
\par
$(1)$ for each $\phi$, $A_{\phi}$ is a unique solution of $(\mathrm{BE})_{\phi};$
\par
$(2)$ $A_{\phi \pm 2\pi /3}=e^{\pm 2\pi i/3}A_{\phi},$ $A_{\phi+\pi}=A_{\phi},$
$A_{-\phi}=\overline{A_{\phi}};$
\par
$(3)$ $A_0=3\cdot 2^{2/3},$ $A_{\pm \pi/3}=3\cdot 2^{2/3} e^{\mp 2\pi i/3};$
\par
$(4)$ $A_{\phi}$ is continuous in $\phi\in \mathbb{R}$, and smooth in $\phi \in
\mathbb{R}\setminus \{ m\pi/3 \,|\, m\in \mathbb{Z} \}.$
\end{prop}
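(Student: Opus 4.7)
The plan is to assemble Proposition \ref{prop8.15} from the pieces that have already been put in place, treating it essentially as a bookkeeping/extension result rather than a new computation. On the base interval $|\phi|\le \pi/3$, Proposition \ref{prop8.12} already provides a unique solution $A_\phi$ of $(\mathrm{BE})_\phi$, continuous on $|\phi|\le\pi/3$ and smooth on $0<|\phi|<\pi/3$. The values $A_0=3\cdot 2^{2/3}$ and $A_{\pm\pi/3}=3\cdot 2^{2/3}e^{\mp 2\pi i/3}$ are Propositions \ref{prop8.2} and \ref{prop8.5}. So item (3) and the $|\phi|\le\pi/3$ portion of items (1), (2), (4) are in hand.

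Next I would extend to all of $\mathbb{R}$ by the symmetries already derived. Proposition \ref{prop8.13} both gives $A_{\phi+\pi}=A_\phi$, $A_{-\phi}=\overline{A_\phi}$ on $|\phi|\le\pi$ and licenses us to \emph{define} $A_{\phi\pm 2\pi/3}=e^{\pm 2\pi i/3}A_\phi$ for $|\phi|\le\pi/3$; iterating the $\pi$-periodicity and the $2\pi/3$-rotation then produces an $A_\phi$ defined for all $\phi\in\mathbb{R}$ and satisfying (2). For uniqueness in (1) at an arbitrary $\phi$, if $A\in\mathbb{C}$ solves $(\mathrm{BE})_\phi$, then the change of variables $z=e^{2m\pi i/3}\zeta$ (the same change used in the proof of Proposition \ref{prop8.5}) transforms the equations into $(\mathrm{BE})_{\phi-2m\pi/3}$ for $e^{-2m\pi i/3}A$. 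Choosing $m$ so that $\phi-2m\pi/3\in[-\pi/3,\pi/3]$, the uniqueness on the base interval from Proposition \ref{prop8.12} forces $e^{-2m\pi i/3}A=A_{\phi-2m\pi/3}$, hence $A=A_\phi$ by the definition in (2).

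For (4) I would check continuity at the interface points $\phi=m\pi/3$ explicitly, since two different translates of the base interval meet there. The key consistency check is that at $\phi=\pi/3$ the value obtained from the base interval, $A_{\pi/3}=3\cdot 2^{2/3}e^{-2\pi i/3}$, agrees with the value obtained from the shifted interval $[\pi/3,\pi]$: using (2) with $m=1$, $A_{\pi/3}=e^{2\pi i/3}A_{\pi/3-2\pi/3}=e^{2\pi i/3}\overline{A_{\pi/3}}=e^{2\pi i/3}\cdot 3\cdot 2^{2/3}e^{2\pi i/3}=3\cdot 2^{2/3}e^{-2\pi i/3}$, and similarly for all $m\pi/3$. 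Continuity then transfers from the base interval via the smooth transformations in (2). Smoothness on each open arc $(m\pi/3,(m+1)\pi/3)$ is inherited from the smoothness of $A_{\tilde\phi}$ on $0<|\tilde\phi|<\pi/3$ together with the explicit multiplier $e^{2m\pi i/3}$.

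I expect no substantive obstacle: all the analytic content (uniqueness on the base interval via the Jacobian/conformality arguments culminating in Propositions \ref{prop8.12}--\ref{prop8.14}, boundedness, the behaviour of $h(A)$, and the explicit limit values at $\phi=0,\pm\pi/3$) has been established earlier. The only subtlety is the continuity at the exceptional points $\phi=m\pi/3$, where smoothness genuinely fails because the elliptic curve $\Pi_{A_\phi}$ degenerates and the Jacobian formula \eqref{8.2} is unavailable; continuity there has to be verified by matching limits from the two adjacent translated intervals, which the explicit formula in (3) makes straightforward.
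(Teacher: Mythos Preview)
Your proposal is correct and is essentially the same approach as the paper's: the paper simply writes ``Thus we have'' after Propositions \ref{prop8.12}, \ref{prop8.13} and \ref{prop8.14}, treating Proposition \ref{prop8.15} as a direct assembly of those pieces together with the explicit endpoint values from Propositions \ref{prop8.2} and \ref{prop8.5}. Your write-up is more explicit about the interface consistency at $\phi=m\pi/3$ and about transporting uniqueness via the $z\mapsto e^{2m\pi i/3}\zeta$ substitution, but these are exactly the checks implicit in the paper's derivation of Proposition \ref{prop8.13}.
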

The trajectory of $A_{\phi}$ is roughly drawn in Figure \ref{trajectory}.
{\small
\begin{figure}[htb]
\begin{center}
\unitlength=0.85mm
\begin{picture}(80,60)(-40,-30)
\put(0,0){\circle*{1.5}}
\put(20,0){\circle*{1.5}}
\put(-10,17.3){\circle*{1.5}}
\put(-10,-17.3){\circle*{1.5}}

\put(-40,0){\line(1,0){80}}

\qbezier (22.6,-8) (22,-14) (19.7, -18)
\put(19.7,-18){\vector(-1,-1){0}}

\thicklines
\qbezier (20,0) (20,34.6) (-10, 17.3)
\qbezier (20,0) (20,-34.6) (-10, -17.3)
\qbezier (-10,17.3) (-40,0) (-10,-17.3)

{\tiny
\put(-32,-22){\makebox{$A_{-2\pi/3}, A_{\pi/3}$}}
\put(-32,21){\makebox{$A_{-\pi/3}, A_{2\pi/3}$}}
\put(22,-4){\makebox{$A_0, A_{\pm \pi}$}}
\put(1,-4){\makebox{$0$}}
}

\end{picture}

\end{center}
\caption{Trajectory of $A_{\phi}$ for $|\phi|\le \pi$}
\label{trajectory}
\end{figure}
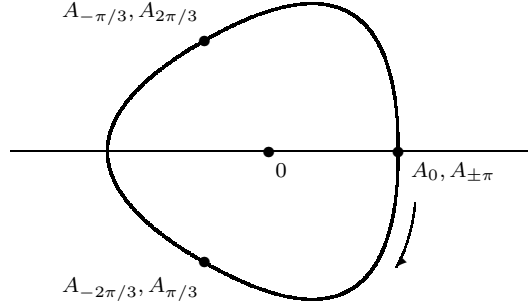
}
\par
By Proposition \ref{prop8.14}, when $|\phi|$ is sufficiently small, the location
of the turning points may be examined. Small variance
of $A_{\phi}$ around $\phi=0$ is given by $A_{\phi}=A_0+\delta_{\phi}$ with
$\delta_{\phi}$ having the properties:
(1) $\delta_{\phi}\to 0$ as $\phi\to 0;$ (2) $\re\delta_{\phi} \le 0$;
(3) $\im \delta_{\phi} \ge 0$ if $\phi\le 0$ and $\im \delta_{\phi}\le 0$ 
if $\phi \ge 0$.
Then the roots $z_0, z_1=2^{-1/3}$ and $z_2= -4^{-2/3}$ of $w(A_{0},z)^2$ 
vary in such a way that
$$
z_0= 2^{-1/3} +\varrho +O(\varrho^2), \quad 
z_1= 2^{-1/3} -\varrho +O(\varrho^2), \quad z_2= -4^{-2/3}+O(\varrho^2)
$$
with $\varrho=2^{-2/3}\cdot 3^{-1/2}\delta_{\phi}^{1/2}.$ 
Indeed, insertion of $z_0=2^{-1/3}+\varrho_+,$ $z_1=2^{-1/3}+\varrho_-,$
$z_2=-4^{-2/3}+\varrho_2$ into 
$z_0+z_1+z_2=A_{\phi}/4,$ $z_1z_2+z_2z_0+z_0z_1=0,$ $z_0z_1z_2=-1/4$
yields
$$
p+\varrho_2 =\delta_{\phi}/4, \quad p+4\varrho_2 +2^{4/3}q =O(p\varrho_2),
\quad p-2\varrho_2 +2^{1/3}q= O(|\varrho_2|(|p|+|q|))
$$
with $p=\varrho_++\varrho_-,$ $q=\varrho_+\varrho_-,$ from which the estimates
above follow.
Thus we have the following.
\begin{prop}\label{prop8.16}
If $|\phi|$ is sufficiently small, the turning points $\lambda_k$ 
and $z_k=\lambda^{-2}_k$ $(k=0,1,2)$ are represented as  
\begin{align*}
&\lambda_0= 2^{1/6} -\varepsilon_{\phi} e^{i\theta_{\phi}} +O(\varepsilon_{\phi}
^2), \quad
\lambda_1= 2^{1/6} +\varepsilon_{\phi} e^{i\theta_{\phi}} +O(\varepsilon_{\phi}
^2), \quad
\lambda_2= 2^{2/3}i  +O(\varepsilon_{\phi}^2), 
\\
&z_0= 2^{-1/3} + 2^{1/2}\varepsilon_{\phi} e^{i\theta_{\phi}} +O(\varepsilon_{\phi}^2),
 \quad
z_1= 2^{-1/3} - 2^{1/2} \varepsilon_{\phi} e^{i\theta_{\phi}} +O(\varepsilon_{\phi}^2), 
\\
& z_2=- 4^{-2/3}  +O(\varepsilon_{\phi}^2). 
\end{align*}
Here $\varepsilon_{\phi}$ and $\theta_{\phi}$ fulfil 
\par
$(1)$ $\varepsilon_{\phi}>0$ and $\varepsilon_{\phi}\to 0$ as $\phi\to 0;$ 
and 
\par
$(2)$ $\theta_{\phi} \to \pi/4$ as $\phi\to 0$ with $\phi<0$, and 
$\theta_{\phi} \to -\pi/4$ as $\phi\to 0$ with $\phi>0$.  
\end{prop}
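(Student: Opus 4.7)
The plan is to extract the content of the short computation stated just before the proposition and then refine the branch analysis, treating the two halves of the claim (for $z_k$ and for $\lambda_k$) separately. First, I would rewrite the three symmetric relations
\[
z_0+z_1+z_2=\frac{A_\phi}{4},\qquad z_1z_2+z_2z_0+z_0z_1=0,\qquad z_0z_1z_2=-\frac14
\]
with $A_\phi=A_0+\delta_\phi$, $A_0=3\cdot 2^{2/3}$, and the ansatz $z_0=2^{-1/3}+\varrho_+$, $z_1=2^{-1/3}+\varrho_-$, $z_2=-4^{-2/3}+\varrho_2$. The three resulting algebraic equations in $(p,q,\varrho_2):=(\varrho_++\varrho_-,\varrho_+\varrho_-,\varrho_2)$ are exactly those written at the bottom of the preceding paragraph, whose solution gives $p=O(\delta_\phi)$, $\varrho_2=O(\delta_\phi)$, and $q=-2^{-4/3}\cdot 3^{-1}\,\delta_\phi+O(\delta_\phi^2)$. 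Since $p=O(\delta_\phi)$ and $\varrho_\pm$ solve $X^2-pX+q=0$, one obtains $\varrho_\pm=\pm\varrho+O(\varrho^2)$ with $\varrho=2^{-2/3}\cdot 3^{-1/2}\delta_\phi^{1/2}$, and $\varrho_2=O(\varrho^2)$. Passing from $z_k$ to $\lambda_k=z_k^{-1/2}$ is then a one-step Taylor expansion around $2^{-1/3}$ and $-4^{-2/3}$, using the branch convention fixed in Section~\ref{sc4} (so that $\lambda_2\to 2^{2/3}i$), which yields the stated expansions provided we set
\[
2^{1/2}\varepsilon_\phi e^{i\theta_\phi}=\varrho=2^{-2/3}\cdot 3^{-1/2}\,\delta_\phi^{1/2},
\qquad \varepsilon_\phi=|\varrho|/\sqrt{2}>0.
\]

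The remaining and more delicate point is the determination of $\theta_\phi=\arg\varrho\bmod 2\pi$, which reduces to fixing the branch of $\delta_\phi^{1/2}$ consistent with the root-numbering convention. The plan here is to combine equation \eqref{8.4} with Proposition~\ref{prop8.14}(2). Equation \eqref{8.4} tells us that near $\phi=0$,
\[
\delta_\phi=A_\phi-A_0=i\gamma_0(\phi)+o(\gamma_0(\phi)),\qquad \gamma_0(\phi)\in\mathbb{R},\ \gamma_0(0)=0,
\]
so $\arg\delta_\phi\to\pm\pi/2$ as $\phi\to 0^{\mp}$. Proposition~\ref{prop8.14}(2) says $y'(t)<0$ for $0<|\phi|<\pi/4$ with $t=\tan\phi$, i.e.\ $\operatorname{Im}A_\phi$ is strictly decreasing across $\phi=0$. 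Hence $\gamma_0(\phi)<0$ for $\phi>0$ small and $\gamma_0(\phi)>0$ for $\phi<0$ small, which pins down
\[
\arg\delta_\phi\to -\pi/2\ \text{as}\ \phi\to 0^+,\qquad \arg\delta_\phi\to +\pi/2\ \text{as}\ \phi\to 0^-.
\]

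Finally I would pick the branch of $\delta_\phi^{1/2}$ by imposing the numbering convention from Section~\ref{ssc2.2}: $\operatorname{Im}z_0\le \operatorname{Im}z_1$ if $\phi>0$, and $\operatorname{Im}z_1\le\operatorname{Im}z_0$ if $\phi<0$. Since $z_0-z_1=2\varrho+O(\varrho^2)$, this is equivalent to $\operatorname{Im}\varrho\le 0$ for $\phi>0$ and $\operatorname{Im}\varrho\ge 0$ for $\phi<0$. Combined with $\arg\delta_\phi\to \mp\pi/2$ above, the unique admissible choice is $\arg\varrho\to -\pi/4$ as $\phi\to 0^+$ and $\arg\varrho\to +\pi/4$ as $\phi\to 0^-$, giving exactly (2) of the proposition, with $\varepsilon_\phi=|\varrho|/\sqrt{2}\to 0$ giving (1). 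The step I expect to be the main obstacle is this branch-tracking: one has to make sure that the convention under which $[\lambda_0,\lambda_1]$ is cut (as fixed in Section~\ref{ssc4.1}, Figures~\ref{curve+}--\ref{cycles2}) is compatible with the labelling $\operatorname{Im}z_0\le\operatorname{Im}z_1$, so that the conclusion about $\theta_\phi$ holds for all $\phi$ with $0<|\phi|$ sufficiently small rather than only along a subsequence; all the rest is an essentially algebraic calculation already sketched in the text.
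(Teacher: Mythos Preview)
Your proposal is correct and follows essentially the same route as the paper: the algebraic computation via the symmetric functions of the roots is exactly what the paper records in the paragraph preceding the proposition, and your determination of $\arg\delta_\phi$ and hence of $\theta_\phi$ via Proposition~\ref{prop8.14}(2) and the root-numbering convention of Section~\ref{ssc2.2} is the intended mechanism. If anything, you are more explicit than the paper, which states only the three qualitative properties $\delta_\phi\to 0$, $\re\delta_\phi\le 0$, $\im\delta_\phi\lessgtr 0$ for $\phi\gtrless 0$ and leaves the passage to $\theta_\phi\to\mp\pi/4$ to the reader; your invocation of \eqref{8.4} to pin down $\arg\delta_\phi\to\mp\pi/2$ precisely (rather than merely placing it in a quadrant) is a useful sharpening.
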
  


\end{document}